\newcommand{\dfr}[2]{\dfrac{#1}{#2}}
\newcommand{\cd}{\cdot}
\newcommand{\dsum}{\displaystyle \sum}
\newcommand{\ol}[1]{\overline{#1}}
\renewcommand{\l}{\left}
\renewcommand{\r}{\right}
\newcommand{\vsv}{\vspace{5mm}}
\newcommand{\vsb}{\vspace{2mm}}
\newcommand{\q}{\quad}
\newcommand{\qq}{\qquad}
\newcommand{\la}{\langle}
\newcommand{\ra}{\rangle}
\newcommand{\abs}[1]{\l|{#1}\r|}
\newcommand{\Z}{\mathbb{Z}}
\newcommand{\C}{\mathbb{C}}
\newcommand{\F}{\mathbb{F}}
\newcommand{\vir}{\mathrm{Vir}}
\newcommand{\aut}{\mathrm{Aut}}
\newcommand{\wt}{\mathrm{wt}}
\renewcommand{\hom}{\mathrm{Hom}}
\newcommand{\id}{\mathrm{id}}
\newcommand{\om}{\omega}
\newcommand{\be}{\beta}
\newcommand{\al}{\alpha}
\newcommand{\w}{\omega}
\newcommand{\vacuum}{\mathbbm{1}}
\newcommand{\vac}{\vacuum}
\theoremstyle{plain}
\newtheorem{thm}{Theorem}[section]
\newtheorem{prop}[thm]{Proposition}
\newtheorem{lem}[thm]{Lemma}
\newtheorem{cor}[thm]{Corollary}
\theoremstyle{definition}
\newtheorem{df}[thm]{Definition}
\newtheorem{nota}[thm]{Notation}
\newtheorem{rem}[thm]{Remark}
\title{On $3$-transposition groups generated by $\sigma$-involutions associated to $c=4/5$ Virasoro vectors}
\author{
  Ching Hung Lam\footnote{Partially supported by NSC grant
  100-2628-M-001005-MY4 }%
  \vsb\\
  {\small \it Institute of Mathematics, Academia Sinica, Taipei 10617, Taiwan}\\
  {\small \it and }\\
  {\small \it National Center for Theoretical Sciences, Taiwan}\\
  {\small e-mail: {\tt chlam@math.sinica.edu.tw}}
  \vsv\\
  Hiroshi Yamauchi\footnote{Partially supported by JSPS Grant-in-Aid for Young 
  Scientists (B) No.~21740011 and No.~24740027.}
  \vsb\\
  {\small \it Department of Mathematics,
  Tokyo Woman's Christian University}\\
  {\small \it 2-6-1 Zempukuji, Suginami-ku, Tokyo 167-8585, Japan}\\
  {\small e-mail: {\tt yamauchi@lab.twcu.ac.jp}}
  \vsv\\
  {\small 2000 {\it Mathematics Subject Classification}. Primary 17B69;
  Secondary 20B25.}
}
\date{}
\newcommand{\dih}[2]{\mathrm{DIH}_{#1}(#2)}
\newcommand{\sfr}[2]{\leavevmode\kern-.05em
  \raise.5ex\hbox{\the\scriptfont0 #1}\kern-.1em
  /\kern-.15em\lower.25ex\hbox{\the\scriptfont0 #2}\kern.02em}
\DeclareMathOperator*{\tensor}{\otimes}
\DeclareMathOperator*{\fusion}{\boxtimes}
\newcommand{\ann}{\mathrm{Ann}}
\newcommand{\Span}{\mathrm{Span}}
\newcommand{\au}{\nu}
\newcommand{\longto}{\longrightarrow}
\newcommand{\W}{\EuScript{W}}
\newcommand{\pii}{\pi \sqrt{-1}\, }
\newcommand{\Ker}{\mathrm{Ker}\,}
\newcommand{\rank}{\mathrm{rank}\,}
\newcommand{\cent}{\mathrm{Cent}}
\newcommand{\ee}{\mathbf{e}}
\newcommand{\AR}[1]{ \mathcal{A}_{#1}}
\begin{document}

\baselineskip 6mm

\maketitle

\begin{abstract}
In this paper, we show that $\sigma$-involutions associated to extendable
$c=4/5$ Virasoro vectors generate a $3$-transposition group in the automorphism group
of a vertex operator algebra (VOA).
Several explicit examples related to lattice VOA are also discussed in details.
In particular, we show that the automorphism group of the VOA
$V_{K_{12}}^{\hat{\au}}$ associated to the Coxeter Todd lattice $K_{12}$ contains
a subgroup isomorphic to $^+\Omega^{-}(8,3)$.
\end{abstract}


\tableofcontents

\renewcommand{\arraystretch}{1.5}

\section{Introduction}

Miyamoto's work on involutions associated to simple Virasoro vertex operator algebra $L(1/2,0)$ is a very beautiful theory in vertex operator algebra (VOA) theory.
Partially motivated by
the work of Conway\,\cite{C} and Dong et al.~\cite{DMZ}, Miyamoto
\cite{M1} discovered a simple method to construct an involutive automorphism associated
to a sub VOA isomorphic to the simple Virasoro vertex operator algebra $L(1/2,0)$ as follows:

Let $W\cong L(1/2,0)$ be a sub VOA of a VOA $V$ and let $e$ be the conformal element of $W$.
Let $V_e(h)$ be the sum of all irreducible $W$-submodules of $V$ isomorphic
to $L(1/2,h)$ for $h=0,1/2,1/16$.
Then one has the isotypical decomposition:
$$
  V=V_e(0)\oplus V_e(1/2)\oplus V_e(1/{16}).
$$
Define a linear automorphism $\tau_e$ on $V$ by
$$
  \tau_e=
  \begin{cases}
    \ \  1 & \text{ on }\  V_e(0)\oplus V_e(1/2),
    \vsb\\
    -1& \text{ on }\  V_e({1}/{16}).
\end{cases}
$$
Then $\tau_e$ becomes an automorphism on the VOA $V$.

When $V=V^\natural$ is the Moonshine VOA, then the automorphism $\tau_e$ defines a $2A$-involution in the Monster simple group. Miyamoto also showed in \cite{M4} (see also \cite{Ho}) that there is a one to
one correspondence between the $2A$ elements of the Monster and
sub VOAs of the Moonshine vertex operator algebra which is
isomorphic to the simple Virasoro VOA $L(\frac{1}{2},0)$. In fact,
Miyamoto gives a new construction of  the famous Moonshine VOA in \cite{M4} directly using
simple Virasoro VOA $L(\frac{1}{2},0)$.

On the fixed point subalgebra $V^{\tau_e}=V_e(0)\oplus V_e(1/2)$,
one can define another linear automorphism $\sigma_e$ by
$$
  \sigma_e=
  \begin{cases}
    \ \  1 & \text{ on }\  V_e(0),
    \vsb\\
    -1& \text{ on }\  V_e({1}/{2}).
  \end{cases}
$$
Then $\sigma_e$ defines an automorphism on $V^{\tau_e}$.
The Virasoro vector $e$ is said to be $\sigma$-type in $V$ if $\tau_e=id_V$.
In \cite{M1}, Miyamoto showed that if $E$ is a set of Virasoro vectors
of $\sigma$-type, then the automorphism subgroup $G=\la \sigma_e\mid
e\in E\ra$ is a 3-transposition group. Such kind of groups are
classified by Matsuo\,\cite{Ma2} and they are 3-transposition groups of
symplectic type \cite{CH}.

It is natural to ask if Miyamoto's theory can be generalized to other
simple Virasoro VOAs. In \cite{M2}, Miyamoto also showed that one can
associate an automorphism $\xi_U$ to a sub VOA $U$isomorphic to the VOA
$L(4/5,0)\oplus L(4/5,3)$, which describes the critical point of the
$3$-state Potts model (See also\cite{zf}), and $\xi_U^3=id_V$.  On the
fixed point subalgebra $V^{\xi_U}$, one can define an involutive
automorphism $\sigma_U$ on $V^{\xi_U}$ (see Proposition \ref{prop:2.8}).

Let $U\cong L(4/5,0)\oplus L(4/5,3)< V$ and let $u$ be the conformal vector of $U$. Then $u$ is said to be of {\it$\sigma$-type}
if $\xi_U=id_V$. In this article, we will  study groups generated by $\sigma$-involutions associated to extendable $c=4/5$ Virasoro
vectors. We will show that they generate a $3$-transposition group. Several explicit examples related to lattice VOAs will be
discussed in details. In particular, we show that some $3$-transposition group of orthogonal type can be realized in this manner.
One important example is the VOA $V_{K_{12}}^{\hat{\tau}}$ associated to the Coxeter-Todd lattice $K_{12}$. We show that
$\sigma$-involutions associated to extendable $c=4/5$ Virasoro vectors in $V_{K_{12}}^{\hat{\tau}}$ generates a subgroup
isomorphic to $^+\Omega^{-}(8,3)$, which we believe is related to one of the 3-local subgroups $3^{8}.\Omega^-(8,3)$ of the
Monster simple group.

The organization of this article is as follows. In Section 2, we recall some basic properties of unitary Virasoro VOAs and their
extensions. The representation theory of the VOA $\W(4/5)\cong L(4/5,0)\oplus L(4/5,3)$ will also be reviewed. In particular,
certain automorphisms associated to $\W(4/5)$ will be defined. In Section 3, we study $\sigma$-involutions associated to
extendable $c=4/5$ simple Virasoro vectors of $\sigma$-type in a VOA $V$. We show that they generate a $3$-transposition
group in $\aut(V)$. In Section 4, we discuss the Griess algebras generated two or three extendable $c=4/5$ simple Virasoro
vectors of $\sigma$-type.  Several explicit examples will be given in Section 5. The lattice type VOA $V_{K_{12}}^{\hat{\tau}}$
associated to the Coxeter-Todd lattice $K_{12}$ is of special interesting since it contains certain  exceptional Virasoro vectors,
which give rise to some extra symmetries.



\section{Virasoro vertex operator algebras and their extensions}

For complex numbers $c$ and $h$, we denote by $L(c,h)$ the
irreducible highest weight representation of the Virasoro algebra
with central charge $c$ and highest weight $h$.
It is shown in \cite{FZ} that $L(c,0)$ has a natural structure of a simple VOA.

\begin{df}
An element $e\in V$ is referred to as a {\it Virasoro vector of  central
charge\/} $c_e\in \C$ if $e\in V_2$ and it satisfies $e_{(1)}e=2e$ and
$e_{(3)}e=(c_e/2)\cd \vac$. It is well-known that the associated modes
$L^e(n):=e_{(n+1)}$,$n\in \Z$, generate a representation of the Virasoro
algebra on $V$ (cf.\ \cite{M1}), i.e., they satisfy the commutator
relation:
$$
  [L^e(m),L^e(n)]
  = (m-n)L^e(m+n)+\delta_{m+n,0}\dfr{m^3-m}{12}c_e.
$$
Therefore, a Virasoro vector together with the vacuum vector generates a
Virasoro VOA inside $V$. We will denote this subalgebra by $\vir(e)$.
A Virasoro vector $e$ is {\it simple} if  $\vir(e) \cong  L(c_e,0)$.
\end{df}

\subsection{Unitary Virasoro vertex operator algebras and their extensions}\label{sec:2.1}

Let
\begin{equation}\label{eq:2.1}
\begin{array}{rl}
  c_m &:= 1-\dfr{6}{(m+2)(m+3)},\qq m=1,2,\dots ,
  \vsb\\
  h_{r,s}^{(m)} &:= \dfr{\{ r(m+3)-s(m+2)\}^2-1}{4(m+2)(m+3)},\q
  1\leq s\leq r\leq m+1.
\end{array}
\end{equation}
It is shown in \cite{W} that $L(c_m,0)$ is rational and
$L(c_m,h_{r,s}^{(m)})$, $1\leq s\leq r\leq m+1$, provide all inequivalent
irreducible $L(c_m,0)$-modules (see also \cite{DMZ}).
This is the so-called unitary series of the Virasoro VOAs.
The fusion rules among $L(c_m,0)$-modules are computed in
\cite{W} and given by
\begin{equation}\label{eq:2.2}
  L(c_m,h^{(m)}_{r_1,s_1})\fusion_{L(c_m,0)} L(c_m,h^{(m)}_{r_2,s_2})
  = \dsum_{\scriptstyle i\in I \atop \scriptstyle j\in J}
    L(c_m,h^{(m)}_{\abs{r_1-r_2}+2i-1,\abs{s_1-s_2}+2j-1}),
\end{equation}
where
$$
\begin{array}{l}
  I=\{ 1\,,2,\,\dots,\,\min \{ r_1,\,r_2,\,m+2-r_1,\,m+2-r_2\}\} ,
  \vsb\\
  J=\{ 1,\,2,\,\dots,\,\min \{ s_1,\,s_2,\,m+3-s_1,\,m+3-s_2\}\} .
\end{array}
$$

\medskip

Among $L(c_m,0)$-modules, only $L(c_m,0)$ and
$L(c_m,h_{m+1,1}^{(m)})$ are simple currents. It is shown in
\cite{LLY} that $L(c_m,0)\oplus L(c_m,h^{(m)}_{m+1,1})$ forms a simple
current extension of $L(c_m,0)$.
Note that $h_{m+1,1}^{(m)}=m(m+1)/4$ is an integer if
$m\equiv 0,\,3 \pmod{4}$ and a half-integer if $m\equiv 1,\,2 \pmod{4}$.

\begin{thm}[\cite{LLY}]\label{thm:2.9}
  (1) The $\Z_2$-graded simple current extension
  $$
    \W(c_m)
    := L(c_m,0)\oplus L(c_m,h^{(m)}_{m+1,1})
  $$
  has a unique simple rational vertex operator algebra structure if
  $m\equiv 0,\,3 \pmod{4}$, and a unique simple rational vertex operator
  superalgebra structure if $m\equiv 1,\,2 \pmod{4}$, both of which
  extends $L(c_m,0)$.
\end{thm}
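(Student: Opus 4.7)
The plan is to deduce the statement from the general theory of simple current extensions of vertex operator algebras, after establishing two key facts about the module $L(c_m, h^{(m)}_{m+1,1})$: first, that it is a self-dual simple current for $L(c_m, 0)$, and second, that its conformal weight has the parity claimed by the integer/half-integer dichotomy.

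For the simple current property, I would specialize the fusion rule formula (\ref{eq:2.2}) to $(r_2, s_2) = (m+1, 1)$. Since
$$
  \min\{r_2,\, m+2-r_2\} = \min\{m+1,\, 1\} = 1 \q\text{and}\q \min\{s_2,\, m+3-s_2\} = \min\{1,\, m+2\} = 1,
$$
both indexing sets $I$ and $J$ collapse to singletons, and the fusion product simplifies to
$$
  L(c_m, h^{(m)}_{r,s}) \fusion_{L(c_m,0)} L(c_m, h^{(m)}_{m+1,1}) = L(c_m, h^{(m)}_{m+2-r,\, s}).
$$
Thus tensoring with $L(c_m, h^{(m)}_{m+1,1})$ acts as an involution on the set of irreducible $L(c_m,0)$-modules, identifying it as a simple current of order $2$; self-duality follows by inserting $(r,s)=(m+1,1)$, which returns $L(c_m, h^{(m)}_{1,1})=L(c_m,0)$.

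For the conformal weight, substituting $(r,s)=(m+1,1)$ into (\ref{eq:2.1}) yields $h^{(m)}_{m+1,1} = m(m+1)/4$, and a quick check modulo $8$ confirms this is an integer precisely when $m \equiv 0, 3 \pmod 4$ and a half-integer when $m \equiv 1, 2 \pmod 4$. With these ingredients in hand, the standard machinery of $\Z_2$-graded simple current extensions---developed for instance by Li and by Dong--Li--Mason, or from the tensor-categorical side by Huang--Kirillov--Ostrik---endows the direct sum $L(c_m,0)\oplus L(c_m, h^{(m)}_{m+1,1})$ with a unique simple rational VOA structure extending $L(c_m,0)$ in the integer case, and a unique simple rational VOSA structure in the half-integer case. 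Rationality of the extension descends from the rationality of $L(c_m, 0)$ established in \cite{W} since the extension is finite.

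The main technical obstacle lies in invoking the categorical extension machinery rigorously: one must know that the braided tensor category of $L(c_m,0)$-modules is well-defined (supplied by Huang's work on the modular tensor category of unitary minimal models), and that the cohomological obstruction to associativity of the $\Z_2$-graded extension in $H^3(\Z_2, \C^\times)$ vanishes. The parity of the conformal weight controls the braiding sign and hence determines whether the resulting structure is a VOA or a VOSA, while uniqueness up to equivalence follows from the triviality of $H^2(\Z_2, \C^\times)$.
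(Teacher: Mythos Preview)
The paper does not supply its own proof of this theorem; it is quoted verbatim from \cite{LLY} and treated as background. Your outline is a correct reconstruction of how such results are established and is essentially the strategy of \cite{LLY}: one verifies via the fusion rules \eqref{eq:2.2} that $L(c_m,h^{(m)}_{m+1,1})$ is a self-dual simple current of order two, checks the parity of $h^{(m)}_{m+1,1}=m(m+1)/4$, and then invokes Li's theory of simple-current extensions (with rationality descending from \cite{W}).

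One small point of imprecision: you write that the obstruction in $H^3(\Z_2,\C^\times)$ ``vanishes,'' but strictly speaking it need not vanish in the VOA sense when $h^{(m)}_{m+1,1}$ is a half-integer---rather, the nontrivial self-braiding is exactly what forces the super structure in the $m\equiv 1,2\pmod 4$ case. You do say this correctly two lines later, so this is only a matter of phrasing. Also, while the categorical machinery of Huang--Kirillov--Ostrik certainly handles this, \cite{LLY} predates that framework and works more concretely with Li's $\Delta$-operator construction of the extended vertex operator; your argument is therefore slightly more modern in packaging but equivalent in substance.
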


\begin{df}\label{df:2.10}
  Let $m\equiv 0$ or $3 \pmod{4}$.
  A simple $c=c_m$ Virasoro vector $u$ of a VOA $V$ is called
  {\it extendable\/} if there exists a non-zero highest weight vector
  $w\in V$ of weight $h^{(m)}_{m+1,1}=m(m+1)/4$ with respect to
  $\vir(u)$ such that the subalgebra generated by $u$ and $w$ is
  isomorphic to the extended Virasoro VOA $\W(c_m)$.
  We will call such a $w$ an {\it $h_{m+1,1}^{(m)}$-primary vector\/}
  associated to $u$.
\end{df}

\begin{lem}[cf. \cite{HLY2}]\label{lem:2.11}
  Let $m\equiv 0,\,3 \pmod{4}$ and $u\in V$ be a simple extendable
  $c=c_m$ Virasoro vector.
  Then a $h_{m+1,1}^{(m)}$-primary vector associated to $u$
  is unique up a scalar multiple.
\end{lem}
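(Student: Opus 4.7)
Set $h := h^{(m)}_{m+1,1}$ and suppose $w,w'$ are two $h$-primary vectors associated to $u$; write $W := \langle u,w\rangle \cong \W(c_m)$. The goal is to show $w' \in \C w$, and the strategy is to decompose $V$ as a $W$-module, use fusion to confine all products $w_{(n)} w'$ inside $\vir(u)$, and then use the simplicity of $\W(c_m)$ to push every component of $w'$ outside $W$ to zero.

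Applying the fusion rule (\ref{eq:2.2}) with $r_1=r_2=m+1$, $s_1=s_2=1$ gives $L(c_m,h)\fusion L(c_m,h) = L(c_m,0)$. Therefore the $\vir(u)$-submodule of $V$ generated by $\{w_{(n)}w':n\in\Z\}$ is a quotient of $L(c_m,0)$, hence zero or isomorphic to $L(c_m,0)$. Because $V$ is of CFT type with $V_0 = \C\vac$, the only $\vir(u)$-primary vector of weight $0$ is a scalar multiple of $\vac$, so the $L(c_m,0)$-isotypical component of $V$ coincides with $\vir(u)$ itself. Thus $w_{(n)}w' \in \vir(u)\subset W$ for every $n$. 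By Theorem~\ref{thm:2.9}, $W$ is rational, so $V$ splits as a direct sum of irreducible $W$-modules, and the uniqueness of $\vac$ forces exactly one copy of $W$ to appear; write $V = W\oplus V'$. Decompose $w' = w'_0 + w'_1$ with $w'_0\in W$, $w'_1\in V'$; both summands are $\vir(u)$-primary of weight $h$. Since the $W$-action preserves $V'$, we have $w_{(n)}w'_1 \in V'$, but by the above $w_{(n)}w' \in W$, so $w_{(n)}w'_1 = 0$ for every $n$, i.e., $Y(w,z)w'_1 = 0$.

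It remains to force $w'_1 = 0$. Assume otherwise and project to a nonzero $\vir(u)$-primary vector $v$ of weight $h$ in some irreducible $W$-submodule $M$ of $V'$ with $Y(w,z)v = 0$, and set $N := \vir(u)\cdot v \cong L(c_m,h)$. The restriction $T := Y(\cdot,z)\cdot : L(c_m,h)\times N \to M((z))$ is a $\vir(u)$-intertwining operator, and by the fusion rule $L(c_m,h)\fusion L(c_m,h) = L(c_m,0)$ it factors as $T = T_{\mathrm{univ}}\otimes \phi$ where $T_{\mathrm{univ}}$ is the universal $\vir(u)$-intertwiner and $\phi$ is a single element of the $L(c_m,0)$-multiplicity space of $M$. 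The leading term is $T(w,z)v = \alpha\,\vac\otimes\phi + O(z^{-2h+1})$ with $\alpha\neq 0$, so $Y(w,z)v = 0$ forces $\phi = 0$, hence $T \equiv 0$. Consequently $Y(a,z)v' = 0$ for every $a\in L(c_m,h)\subset W$ and every $v'\in N$, so $N$ is $W$-stable (with $L(c_m,h)$ acting as zero) and, by irreducibility of $M$, $M = N \cong L(c_m,h)$ as $\vir(u)$-module. Then $L(c_m,h)\subset \mathrm{Ann}_W(M)$, which is a two-sided ideal of $W$ by a direct Jacobi identity check. Since $L(c_m,h)\cdot L(c_m,h) = \vir(u)$ inside $W$, the ideal of $W$ generated by $L(c_m,h)$ is all of $W$, whence $\vac\in\mathrm{Ann}_W(M)$ and $M = 0$, contradicting $v\neq 0$. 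Hence $w'_1 = 0$, so $w'\in W$, and the weight-$h$ primary subspace of $W = L(c_m,0)\oplus L(c_m,h)$ is one-dimensional and spanned by $w$, giving $w'\in\C w$.

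The main technical hurdle is the intertwiner uniqueness step: checking that the vertex-operator restriction is a genuine $\vir(u)$-intertwining operator of type $\binom{M}{L(c_m,h),L(c_m,h)}$, and that vanishing on the highest-weight pair $(w,v)$ forces the full intertwiner (and hence the entire $L(c_m,h)$-action on $N$) to vanish. An entirely elementary alternative runs via the primary field commutator $[L^u(n),Y(w,z)] = (z^{n+1}\partial_z + h(n+1)z^n)Y(w,z)$, which propagates $Y(w,z)$-annihilation to all Virasoro descendants of $v$; then the iterate formula recursively yields $Y(a,z)v' = 0$ for each Virasoro descendant $a$ of $w$ in $W$ and each $v'\in N$. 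The remaining ingredients are structural: CFT type pins down the $L(c_m,0)$-isotypical piece of $V$, and Theorem~\ref{thm:2.9} supplies the rationality and simplicity of $\W(c_m)$ that convert the ideal inclusion $L(c_m,h)\subset\mathrm{Ann}_W(M)$ into $M=0$.
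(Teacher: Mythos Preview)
Your argument has a genuine gap at the very first reduction. You claim that ``the $L(c_m,0)$-isotypical component of $V$ coincides with $\vir(u)$ itself'' because $V_0=\C\vac$. But $V_0$ is the $L(0)$-weight-zero space for the conformal vector $\w$ of $V$, whereas a copy of $L(c_m,0)$ inside $V$ is detected by a $\vir(u)$-primary vector of $L^u(0)$-weight zero. Since $u$ is \emph{not} the conformal vector of $V$ in general, these two gradings differ. Any vector in the commutant $\com_V(\vir(u))$ is $\vir(u)$-primary of $L^u(0)$-weight~$0$; for instance $\w-u\in V_2$ already gives a second highest weight vector, so $V_u[0]\supsetneq \vir(u)$ whenever $u\ne\w$. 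The same error recurs when you assert that $V=W\oplus V'$ with exactly one copy of $W$: copies of $W$ in $V$ are parametrised by $W$-vacuum-like vectors, and these fill up all of $\com_V(W)$, not just $\C\vac$.

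More tellingly, your argument never uses the hypothesis that $w'$ itself extends $u$ to $\W(c_m)$; it only uses that $w'$ is an $h$-primary vector. If the argument were correct it would prove that \emph{every} $\vir(u)$-primary vector of weight $h$ lies in $\C w$. That statement is false: take $V=\W(c_m)\tensor U$ for any nontrivial VOA $U$ of CFT type and $u=u_1\tensor\vac$; then $w_1\tensor v$ is $h$-primary for every $v\in U$, an infinite-dimensional space. What singles out $w_1\tensor\vac$ is precisely the extra requirement $\la u,w'\ra\cong\W(c_m)$, which forces $(w')_{(n)}w'\in\vir(u)$ (not merely $V_u[0]$) and in particular $(w')_{(2h-1)}w'\in\C\vac\setminus\{0\}$. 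The proof in \cite{HLY2} exploits exactly this constraint; your argument must be reorganised to use it.
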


\begin{proof}
See Lemma 2.9 of \cite{HLY2}.
\end{proof}

In this article, we will mainly interested in the case $c_3=4/5$.
The following theorem is proved in \cite{KMY}.

\begin{thm}
The extended Virasoro VOA $\W(\sfr{4}{5})$ is rational and has
six inequivalent irreducible modules:
\[
  W[0],\  W[\sfr{2}5],\  W[\sfr{2}3]^+,\  W[\sfr{2}3]^-, \  W[\sfr{1}{15}]^+,\
  W[\sfr{1}{15}]^-,
\]
where $W[0]\cong  L(\sfr{4}{5},0)\oplus L(\sfr{4}{5},3)$,
$W[\sfr{2}5]\cong L(\sfr{4}{5},\sfr{2}{5})\oplus L(\sfr{4}{5},\sfr{7}{5})$,
$W[\sfr{2}3]^+\cong W[\sfr{2}3]^- \cong  L(\sfr{4}{5},\sfr{2}{3})$ and
$W[\sfr{1}{15}]^+\cong W[\sfr{1}{15}]^-\cong L(\sfr{4}{5},\sfr{1}{15})$ as
$L(\sfr{4}{5},0)$-modules.
The ambiguity on choosing signs $\pm$ is solved by fusion rules (cf.\ \cite{M2}).
\end{thm}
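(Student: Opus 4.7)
The plan is to apply the theory of finite simple current extensions to the $\Z_2$-graded extension $\W(\sfr{4}{5})=L(\sfr{4}{5},0)\oplus L(\sfr{4}{5},3)$ provided by Theorem \ref{thm:2.9} in the case $m=3$; here the simple current has integer conformal weight $h_{4,1}^{(3)}=3$, so $\W(\sfr{4}{5})$ is an ordinary (bosonic) vertex operator algebra. Rationality of $L(\sfr{4}{5},0)$ \cite{W}, together with the standard fact that finite cyclic simple current extensions of rational VOAs remain rational, delivers the rationality of $\W(\sfr{4}{5})$.

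For the module classification I would use that every irreducible $\W(\sfr{4}{5})$-module restricts to a direct sum of irreducible $L(\sfr{4}{5},0)$-modules forming a single orbit under the fusion action of the simple current $L(\sfr{4}{5},3)$. Applying (\ref{eq:2.2}) with $(r_1,s_1)=(4,1)$ collapses both index sets $I$ and $J$ to $\{1\}$, giving
\[
  L(\sfr{4}{5},3)\fusion L(\sfr{4}{5},h_{r,s}^{(3)})=L(\sfr{4}{5},h_{|4-r|+1,s}^{(3)}).
\]
Combined with the Kac symmetry $h_{r,s}^{(3)}=h_{5-r,6-s}^{(3)}$, this partitions the ten irreducibles of $L(\sfr{4}{5},0)$ into four size-two orbits $\{0,3\}$, $\{\sfr{2}{5},\sfr{7}{5}\}$, $\{\sfr{1}{40},\sfr{21}{40}\}$, $\{\sfr{1}{8},\sfr{13}{8}\}$, and two fixed orbits $\{\sfr{2}{3}\}$, $\{\sfr{1}{15}\}$.

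The next step is the monodromy condition: an orbit lifts to an \emph{untwisted} $\W(\sfr{4}{5})$-module only if the conformal weights of its members are congruent modulo $\Z$. Since $\sfr{21}{40}-\sfr{1}{40}=\sfr{1}{2}$ and $\sfr{13}{8}-\sfr{1}{8}=\sfr{3}{2}$, the orbits $\{\sfr{1}{40},\sfr{21}{40}\}$ and $\{\sfr{1}{8},\sfr{13}{8}\}$ contribute only $\Z_2$-twisted modules and are discarded. The surviving size-two orbits lift uniquely to give $W[0]$ and $W[\sfr{2}{5}]$ with the stated decompositions. Each fixed orbit admits two inequivalent untwisted lifts, obtained from the two normalizations of an intertwining operator of type $\binom{L(\sfr{4}{5},h)}{L(\sfr{4}{5},3)\;L(\sfr{4}{5},h)}$, giving $W[\sfr{2}{3}]^{\pm}$ and $W[\sfr{1}{15}]^{\pm}$. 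The assignment of $\pm$-labels is then pinned down by matching the $\W(\sfr{4}{5})$-fusion products against those dictated by the $L(\sfr{4}{5},0)$-intertwiners, following \cite{M2}.

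The main obstacle is the fixed-point resolution: proving that each of $L(\sfr{4}{5},\sfr{2}{3})$ and $L(\sfr{4}{5},\sfr{1}{15})$ genuinely carries exactly two inequivalent $\W(\sfr{4}{5})$-module structures, and no more. This requires exhibiting the intertwining operators above, controlling their scalar normalization through an associativity constraint, and ruling out any further extensions; this is the representation-theoretic heart of \cite{KMY}.
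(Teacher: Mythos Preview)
The paper does not supply its own proof of this theorem: it is introduced with the sentence ``The following theorem is proved in \cite{KMY}'' and then simply stated. So there is no in-paper argument to compare against; your proposal is already more detailed than what the paper offers.

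That said, your outline is a correct and standard modern derivation via simple current extension theory. The orbit computation under fusion with $L(\sfr{4}{5},3)$ is accurate, the monodromy test correctly discards the $\{\sfr{1}{40},\sfr{21}{40}\}$ and $\{\sfr{1}{8},\sfr{13}{8}\}$ orbits as $\Z_2$-twisted, and the length-two untwisted orbits indeed lift uniquely. You are also right to flag fixed-point resolution as the delicate step; for a $\Z_2$ simple current of integral weight the general theory guarantees exactly two inequivalent lifts of each fixed module, but verifying this cleanly requires either invoking a general extension theorem (e.g.\ the framework later systematized by Lam--Lam--Yamauchi \cite{LLY} or the orbifold/simple-current results of Carnahan--Miyamoto type) or the explicit intertwining-operator construction. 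The original argument in \cite{KMY} proceeds somewhat differently, realizing $\W(\sfr{4}{5})$ inside the lattice VOA $V_{\sqrt{2}A_2}$ and reading off the module list from the known decomposition of $V_{\sqrt{2}A_2}$-modules, which sidesteps the abstract fixed-point analysis at the cost of being tied to that particular embedding. Your route is more conceptual and generalizes better; theirs is more concrete for this specific case.
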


The fusion rules among irreducible $\W(\sfr{4}{5})$-modules
have some natural $\Z_3$-symmetries (cf.~\cite{M2,LLY}) and we can extend them
to automorphisms of VOAs containing these extended Virasoro VOAs.

\begin{thm}[\cite{M2,LLY}]\label{thm:2.12}
  Let $V$ be a VOA and let $U$ a sub VOA of $V$ isomorphic to $\W(\sfr{4}{5})$.
  Define a linear automorphism $\xi_U$ of $V$ to act on each
  irreducible $\W(\sfr{4}{5})$-submodule $M$ by
  $$
  \begin{cases}
    ~1 & \mbox{if}~~ M\cong W[0]
    ~~\mbox{or}~~ W[\sfr{2}5],
    \vsb\\
    ~e^{\pm 2\pi\sqrt{-1}/3} & \mbox{if}~~
    M\cong W[\sfr{2}{3}]^\pm ~~\mbox{or}~~
    W[\sfr{1}{15}]^\pm.
  \end{cases}
  $$
  Then $\xi_U$ defines an element in $\aut(V)$ and $\xi_U^3=1$.
\end{thm}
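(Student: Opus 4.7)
The plan is to realize $\xi_U$ as the grading automorphism attached to a $\mathbb{Z}_3$-grading on $V$ coming from a $\mathbb{Z}_3$-grading on the set $\mathrm{Irr}(\W(\sfr{4}{5}))$ of irreducible $\W(\sfr{4}{5})$-modules. Concretely, I would assign grades
\[
  W[0],\,W[\sfr{2}{5}] \longmapsto 0,\qquad
  W[\sfr{2}{3}]^+,\,W[\sfr{1}{15}]^+ \longmapsto 1,\qquad
  W[\sfr{2}{3}]^-,\,W[\sfr{1}{15}]^- \longmapsto 2
\]
in $\mathbb{Z}/3\mathbb{Z}$, so that $\xi_U$ acts on an irreducible $\W(\sfr{4}{5})$-submodule of grade $g$ by $e^{2\pi\sqrt{-1}\,g/3}$.

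Since $\W(\sfr{4}{5})$ is rational, the restriction of $V$ to $U\cong \W(\sfr{4}{5})$ decomposes as a direct sum of isotypical components indexed by $\mathrm{Irr}(\W(\sfr{4}{5}))$, and regrouping these components by their grade yields a vector space decomposition $V=V_0\oplus V_1\oplus V_2$ on which $\xi_U$ is well defined as a linear map. The substantive step is to show that this decomposition is a VOA grading, i.e.\ $a_{(n)}b\in V_{i+j}$ whenever $a\in V_i$ and $b\in V_j$. By the general theory of intertwining operators, this reduces to checking that the fusion algebra of $\W(\sfr{4}{5})$ is $\mathbb{Z}_3$-graded in the sense that whenever $M^k$ appears in $M^i\fusion M^j$ we have $g(M^k)=g(M^i)+g(M^j)$ modulo $3$. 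Using the fusion rules among the six irreducibles (these are stated in \cite{M2}; they can be read off from \eqref{eq:2.2} together with the simple current structure of $\W(\sfr{4}{5})$), I would verify the grade additivity case by case. Once this is established, $\xi_U$ preserves all $n$-products, hence $\xi_U\in\aut(V)$; and because the grading group has order $3$, $\xi_U^3=1$.

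The main obstacle is the careful bookkeeping of the $\pm$ signs. The isomorphism classes of $W[\sfr{2}{3}]^\pm$ and $W[\sfr{1}{15}]^\pm$ coincide as $L(\sfr{4}{5},0)$-modules, and the distinction between the $+$ and $-$ components is only pinned down by the fusion rules (e.g.\ by requiring $W[\sfr{2}{3}]^+\fusion W[\sfr{2}{3}]^+=W[\sfr{2}{3}]^-$ rather than $W[\sfr{2}{3}]^+$). One must therefore fix the sign convention consistently on all irreducibles and verify that every fusion product among the six modules respects the proposed $\mathbb{Z}_3$-grading; this is exactly the ambiguity resolved by fusion rules referred to at the end of the previous theorem, and is the content of the $\mathbb{Z}_3$-symmetry mentioned before the statement. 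After this verification the conclusion that $\xi_U$ is an order $3$ automorphism is immediate.
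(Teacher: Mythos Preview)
Your proposal is correct and is essentially the standard argument. Note, however, that the paper does not give its own proof of this theorem: it is stated with a citation to \cite{M2,LLY} and used as a known result. The approach you outline---grading $\mathrm{Irr}(\W(\sfr{4}{5}))$ by $\Z_3$ via the fusion rules, decomposing $V$ into isotypical components, and invoking the intertwining-operator interpretation of the restricted vertex operators to conclude that the grading is multiplicative---is exactly the method used in those references, so there is nothing to compare.
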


\begin{df}
  Let $u$ be a simple $c=4/5$ Virasoro vector in a VOA $V$.
  Let $V_u[h]$ be the sum of all irreducible $\vir(u)$-submodules of $V$ isomorphic
  to $L(\sfr{4}5,h)$.
  The Virasoro vector $u$ is said to be of $\sigma$-type on $V$ if $V_u[h]=0$
  for any $h\ne 0$, $3$, $2/5$, $7/5$.
\end{df}

\begin{prop}[\cite{M2}]\label{prop:2.8}
Let $u$ be a simple $c=4/5$ Virasoro vector of $\sigma$-type on $V$. Then
the  linear map $\sigma_u$ defined by
\begin{equation}\label{eq:2.4}
  \sigma_u:=
  \begin{cases}
    \, \ 1 & \mathrm{on}~~ V_u[0]\oplus V_u[\sfr{7}{5}],
    \vsb\\
    \, -1 & \mathrm{on}~~ V_u[3]\oplus V_u[\sfr{2}{5}].
  \end{cases}
\end{equation}
is an automorphism of $V$.
\end{prop}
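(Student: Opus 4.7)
The plan is to reduce the assertion to a finite verification of fusion rules. The $\sigma$-type hypothesis yields the isotypical decomposition
$$
V = V_u[0]\oplus V_u[3]\oplus V_u[\sfr{2}{5}]\oplus V_u[\sfr{7}{5}]
$$
as a $\vir(u)\cong L(\sfr{4}{5},0)$-module, and $\sigma_u$ is simply multiplication by the scalar $\epsilon(h)$ on $V_u[h]$, where $\epsilon(0)=\epsilon(\sfr{7}{5})=+1$ and $\epsilon(3)=\epsilon(\sfr{2}{5})=-1$. I would first record that $\sigma_u$ preserves the vacuum and the conformal element: the vacuum lies in $V_u[0]$, and since each $L(\sfr{4}{5},h)$ has lowest weight $h$, the weight-$2$ subspace of $V$ sits entirely inside $V_u[0]$, so $\omega\in V_u[0]$.

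The essential step is the identity $\sigma_u(a_{(n)}b) = (\sigma_u a)_{(n)}(\sigma_u b)$, which by linearity it suffices to check for $a\in V_u[h_1]$ and $b\in V_u[h_2]$ with $h_1,h_2\in\{0,3,\sfr{2}{5},\sfr{7}{5}\}$. The modes of $Y(a,z)$ assemble into components of an intertwining operator of $L(\sfr{4}{5},0)$-modules of type $L(\sfr{4}{5},h_1)\otimes L(\sfr{4}{5},h_2)\to L(\sfr{4}{5},h_1)\fusion L(\sfr{4}{5},h_2)$, so $a_{(n)} V_u[h_2]$ lies in $\bigoplus_h V_u[h]$ with $h$ running over the highest weights appearing in $L(\sfr{4}{5},h_1)\fusion L(\sfr{4}{5},h_2)$. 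It then suffices to show that $\epsilon(h)=\epsilon(h_1)\epsilon(h_2)$ for every such $h$.

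This last point is verified by applying formula (2.2) with $m=3$ to the parameters $(r,1)$ for $r=1,2,3,4$, corresponding respectively to $h=0,\sfr{2}{5},\sfr{7}{5},3$. The calculation produces only six non-trivial products, namely
$$
\begin{array}{l}
3\fusion 3 = 0,\q 3\fusion \sfr{2}{5} = \sfr{7}{5},\q 3\fusion \sfr{7}{5} = \sfr{2}{5},\\
\sfr{2}{5}\fusion \sfr{2}{5} = 0\oplus \sfr{7}{5},\q \sfr{2}{5}\fusion \sfr{7}{5} = \sfr{2}{5}\oplus 3,\q \sfr{7}{5}\fusion \sfr{7}{5} = 0\oplus \sfr{7}{5},
\end{array}
$$
and in each product the character $\epsilon$ takes the value $\epsilon(h_1)\epsilon(h_2)$ on every summand. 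Consequently $a_{(n)}b$ lies in the $\epsilon(h_1)\epsilon(h_2)$-eigenspace of $\sigma_u$, and the required multiplicativity follows. I do not anticipate any genuine obstacle: the proposition is exactly the statement that $\epsilon$ descends to a one-dimensional character of the fusion subring generated by the four allowed modules $\{L(\sfr{4}{5},h):h\in\{0,3,\sfr{2}{5},\sfr{7}{5}\}\}$, and the non-trivial content is contained entirely in the above finite table.
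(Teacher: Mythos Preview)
The paper does not supply a proof of this proposition; it is simply quoted from Miyamoto~\cite{M2}.  Your fusion-rule argument is correct and is essentially Miyamoto's: the four modules $L(\sfr{4}{5},h)$ with $h\in\{0,\sfr{2}{5},\sfr{7}{5},3\}$ close under fusion (your table is accurate), and $\epsilon$ is a $\{\pm 1\}$-valued character on that fusion subring, which is exactly the content needed for $\sigma_u$ to respect the vertex operators.

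One small imprecision worth fixing: in showing $\omega\in V_u[0]$ you invoke ``the weight-$2$ subspace of $V$ sits entirely inside $V_u[0]$'', but $\omega$ has $L(0)$-weight $2$, not $L^u(0)$-weight $2$, and these two gradings are not the same; indeed $u_{(1)}\omega$ is not a scalar multiple of $\omega$.  A clean repair (under the OZ hypothesis in force throughout the paper) is to write $\omega = u + (\omega - u)$ and note that $u_{(1)}(\omega-u)=\omega_{(1)}u-u_{(1)}u=2u-2u=0$, so $\omega-u$ has $L^u(0)$-eigenvalue $0$; since $0\notin h+\Z_{\geq 0}$ for $h\in\{\sfr{2}{5},\sfr{7}{5},3\}$, this forces $\omega-u\in V_u[0]$, and of course $u\in V_u[0]$ as well.
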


By the classification of irreducible $\W(\sfr{4}{5})$-modules, we have the
following observation.

\begin{lem}\label{lem:2.9}
Let $u$ be a simple extendable $c=4/5$ Virasoro vector of $V$ and
$U$ the subalgebra isomorphic to $\W(\sfr{4}{5})$ generated by $u$ and
its $3$-primary vector.
Then $u$ is of $\sigma$-type if and only if the automorphism $\xi_U$
defined in Theorem \ref{thm:2.12} is trivial on $V$.
\end{lem}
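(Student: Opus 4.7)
The plan is to reduce both conditions to the same statement about which irreducible $U$-modules appear in the decomposition of $V$. Since $U\cong \W(\sfr{4}{5})$ is rational by the preceding theorem, I would first decompose $V$ as a direct sum of the six irreducibles listed there, namely $W[0],\,W[\sfr{2}{5}],\,W[\sfr{2}{3}]^\pm,\,W[\sfr{1}{15}]^\pm$.

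Next I would use the explicit $L(\sfr{4}{5},0)$-decompositions already recorded: $W[0]\cong L(\sfr{4}{5},0)\oplus L(\sfr{4}{5},3)$ and $W[\sfr{2}{5}]\cong L(\sfr{4}{5},\sfr{2}{5})\oplus L(\sfr{4}{5},\sfr{7}{5})$ only involve weights in $\{0,3,\sfr{2}{5},\sfr{7}{5}\}$, whereas $W[\sfr{2}{3}]^\pm\cong L(\sfr{4}{5},\sfr{2}{3})$ and $W[\sfr{1}{15}]^\pm\cong L(\sfr{4}{5},\sfr{1}{15})$ give $\vir(u)$-irreducibles of weight $\sfr{2}{3}$ or $\sfr{1}{15}$, which lie outside that set. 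Hence $V_u[h]\neq 0$ for some $h\notin\{0,3,\sfr{2}{5},\sfr{7}{5}\}$ precisely when at least one summand of type $W[\sfr{2}{3}]^\pm$ or $W[\sfr{1}{15}]^\pm$ appears in the $U$-module decomposition of $V$.

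By the definition of $\sigma$-type, $u$ is of $\sigma$-type if and only if no such summand occurs. On the other hand, reading off $\xi_U$ from Theorem \ref{thm:2.12} shows that $\xi_U$ acts as $1$ on $W[0]$ and $W[\sfr{2}{5}]$, and as a primitive cube root of unity on $W[\sfr{2}{3}]^\pm$ and $W[\sfr{1}{15}]^\pm$; therefore $\xi_U=\id_V$ if and only if the same modules $W[\sfr{2}{3}]^\pm,\,W[\sfr{1}{15}]^\pm$ are absent from the decomposition. The two conditions coincide, giving the desired equivalence.

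The only subtle point is justifying complete reducibility of $V$ as a $U$-module, but this follows from the rationality of $\W(\sfr{4}{5})$ together with the standard finiteness assumptions placed on $V$ in this context; once that is granted the argument is a direct consequence of the two tables of restrictions above, so no genuinely hard step is anticipated.
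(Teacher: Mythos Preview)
Your argument is correct and is precisely the unpacking of what the paper intends: the lemma is stated there as an immediate ``observation'' from the classification of irreducible $\W(\sfr{4}{5})$-modules, without a written proof. Your decomposition of $V$ into $U$-irreducibles and comparison of the $\vir(u)$-content of each is exactly the reasoning behind that observation, so there is nothing to add.
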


We also need the following result:

\begin{lem}\label{lem:2.8}
  Let $V$ be a VOA with grading $V=\bigoplus_{n\geq 0}V_n$,
  $V_0=\C \vac$ and $V_1=0$, and let $u\in V$ be a Virasoro vector
  such that $\vir(u)\cong L(c_m,0)$.
  Then the zero-mode $o(u)=u_{(1)}$ acts on the Griess algebra
  of $V$ semisimply with possible eigenvalues $2$ and
  $h_{r,s}^{(m)}$, $1\leq s\leq r\leq m+1$.
  Moreover, if $h_{r,s}^{(m)}\ne 2$ for $1\leq s\leq r\leq m+1$
  then the eigenspace for the eigenvalue $2$ is one-dimensional,
  namely, it is spanned by the Virasoro vector $u$.
\end{lem}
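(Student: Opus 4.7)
The plan is to decompose $V$ with respect to the action of $\vir(u)\cong L(c_m,0)$ and then restrict to the Griess algebra $V_2$ while carefully tracking $L(0)$-weights. First I would invoke Wang's rationality theorem for $L(c_m,0)$ (the same result quoted to produce the fusion rules in \eqref{eq:2.2}) to decompose $V$ as a direct sum of irreducible $\vir(u)$-submodules, each isomorphic to some $L(c_m,h_{r,s}^{(m)})$. Since each such highest weight module carries $L^u(0)$-eigenvalues in $h_{r,s}^{(m)}+\Z_{\geq 0}$, the zero-mode $o(u)=u_{(1)}=L^u(0)$ is semisimple on $V$, and a fortiori on $V_2$, with all eigenvalues of the form $h_{r,s}^{(m)}+k$ for some $k\in\Z_{\geq 0}$.

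Next I would verify the key commutation $[L(0),L^u(n)]=-n\,L^u(n)$, which follows from the Borcherds commutator formula together with $\omega_{(0)}u=Tu$ and $\omega_{(1)}u=2u$ (since $u\in V_2$). In particular $L(0)$ and $L^u(0)$ commute, so $o(u)$ preserves the grading, and each operator $L^u(-n_1)\cdots L^u(-n_l)$ raises the $L(0)$-weight by $n_1+\cdots+n_l$. Moreover the primary subspaces of $\vir(u)$ inherit an $L(0)$-grading from $V$. Consequently, an $L^u(0)$-eigenvector of eigenvalue $h_{r,s}^{(m)}+k$ lying in $V_2$ can be expressed as a sum of descendants $L^u(-n_1)\cdots L^u(-n_l)w$ with $\sum n_i=k$ and $w$ a $\vir(u)$-primary vector of weight $h_{r,s}^{(m)}$ and $L(0)$-weight $2-k$.

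A case analysis on $k\ge 0$ then closes the argument. For $k=0$, the eigenvalue is simply $h_{r,s}^{(m)}$, which is allowed. For $k=1$, the primary $w$ would lie in $V_1=0$, contributing nothing. For $k=2$, the primary $w$ must live in $V_0=\C\vac$, which is primary only for the trivial module $L(c_m,0)$ (hence carries $\vir(u)$-weight $0$); since $L^u(-1)\vac=u_{(0)}\vac=0$, the only non-zero level-$2$ descendant is $L^u(-2)\vac=u_{(-1)}\vac=u$, so the resulting eigenvalue is $2$ and the corresponding eigenspace contribution is exactly $\C u$. For $k\ge 3$ one would need a primary of negative $L(0)$-weight, which is impossible. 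This proves that $o(u)|_{V_2}$ is semisimple with eigenvalues in $\{2\}\cup\{h_{r,s}^{(m)}\}$, and that if $2\notin\{h_{r,s}^{(m)}\}$ then the $2$-eigenspace is precisely $\C u$.

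There is no deep obstacle; the only subtlety is the bookkeeping between the ambient $L(0)$-grading and the internal $\vir(u)$-grading, reconciled by the commutator $[L(0),L^u(-n)]=n\,L^u(-n)$. Once that is in place, the hypotheses $V_0=\C\vac$ and $V_1=0$ immediately exclude all the descendant contributions except the tautological one $L^u(-2)\vac=u$.
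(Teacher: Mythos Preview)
Your argument is correct and is essentially the standard one. The paper itself does not give a proof but simply refers to Lemma~2.6 of \cite{HLY1}, where the same reasoning---complete reducibility of $V$ as an $L(c_m,0)$-module via Wang's rationality, the commutation $[L(0),L^u(n)]=-n\,L^u(n)$, and the hypotheses $V_0=\C\vac$, $V_1=0$ forcing the only nontrivial descendant in $V_2$ to be $L^u(-2)\vac=u$---is carried out.
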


\begin{proof}
See Lemma 2.6 of \cite{HLY1}.
\end{proof}


\section{$3$-transposition property of $\sigma$-involutions}\label{sec:3.2.3}

In this section, we will study $\sigma$-involutions associated to simple extendable
$c=4/5$ Virasoro vectors of $\sigma$-type in a VOA $V$.
We will show that they generate a $3$-transposition group in $\aut(V)$.
The main idea has be given in the arXiv preprint of Matsuo \cite{Ma2}.
Some technical details were also given but this part was removed in
the final published version.

Let $V$ be a VOA of OZ-type, i.e., $V_n=0$ for $n<0$, $V_0 =\C \vac$
and $V_1=0$.
Then the weight two subspace $V_2$ forms a commutative (non-associative) algebra,
called the \emph{Griess algebra} of $V$,
with respect to the product $ a\cdot b=a_{(1)}b $ and has an invariant form
defined by $(a|b)\vac =a_{(3)}b$ for $a, b\in V_2$.
We use $B$ to denote the Griess algebra of $V$.

Now let $e$ be a simple extendable $c=4/5$ Virasoro vector of $\sigma$-type and
$U$ the sub VOA generated by $e$ and its $3$-primary vector.
Then $\xi_U=1$ and
\[
  V= V_U[0]\oplus V_U[\sfr{2}5],
\]
where $V_U[h]$ denotes the sum of all irreducible $U$-submodules of $V$
isomorphic to $W[h]$, $h=0$ or $2/5$.
Then the Griess algebra $B$ can be decomposed as
\begin{equation}\label{eq:bitype}
  B= B_e(2)\oplus B_e(0)\oplus B_e(\sfr{2}5),
\end{equation}
where $B_e(h)=\{ a\in B \mid e_{(1)}a=ha\}$ and $B_e(2)=\C e$ (cf.~Lemma \ref{lem:2.8}).
Since the bilinear form is invariant, the decomposition \eqref{eq:bitype} is indeed
orthogonal decomposition.
\begin{rem}
If the Virasoro vector $e$ is not extendable, then the decomposition in \eqref{eq:bitype} is no longer valid since $e_1$ may also
have eigenvectors of eigenvalue $7/5$. In fact, such kind of examples exists  in the VOA $V_{\sqrt{2}A_2}^+$ (see for example
\cite{KMY}).
\end{rem}

\begin{nota}
For any $x\in B$, we use $x_e(h)$ to denote the component of $x$ in $B_e(h)$, i.e.,
\[
  x= x_e(0)+x_e(\sfr{2}5)+x_e(2),
\]
where $x_e(h)\in B_e(h)$.
\end{nota}

\begin{thm}\label{thm:3.4}
  Let $e$, $f$ be distinct simple extendable $c=4/5$ Virasoro vectors of $V$ of
  $\sigma$-type and consider the subalgebra $B(e,f)$ of the Griess algebra of $V$
  generated by $e$ and $f$.
  Then one of the following holds.
  \\
  (1)~ $(e|f)=0$ and $e\cd f=0$. In this case $B(e,f)=\C e \oplus \C f$.
  \\
  (2)~ $(e|f)=1/25$, $\sigma_e f= \sigma_f e$ and $e\cd f=\dfr{1}{5}(e+f-\sigma_e f)$.
  In this case $B(e,f)$ is 3-dimensional spanned by $e$, $f$, and $\sigma_e f$.
\end{thm}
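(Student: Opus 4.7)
The plan is to exploit the $e_{(1)}$-eigenspace decomposition $B = \C e \oplus B_e(0) \oplus B_e(2/5)$ of the Griess algebra and reduce everything to a single scalar parameter $\alpha$. Writing $f = \alpha e + f_e(0) + f_e(2/5)$, the invariance of the form yields $\alpha = \tfrac{5}{2}(e|f)$, and the eigenvalue action of $e_{(1)}$ gives
$$
  e \cd f = 2\alpha\, e + \tfrac{2}{5} f_e(2/5).
$$
Since $\sigma_e$ acts as $+1$ on $\C e \oplus B_e(0)$ and $-1$ on $B_e(2/5)$, we have $\sigma_e f = \alpha e + f_e(0) - f_e(2/5)$, so $f_e(2/5) = \tfrac{1}{2}(f - \sigma_e f)$ and hence $e \cd f = 2\alpha\, e + \tfrac{1}{5}(f - \sigma_e f)$. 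Swapping the roles of $e$ and $f$ produces the symmetric expression $e \cd f = 2\alpha\, f + \tfrac{1}{5}(e - \sigma_f e)$; equating the two yields the auxiliary identity
$$
  (10\alpha - 1)(e - f) = \sigma_e f - \sigma_f e,
$$
and projecting this onto the three $B_e$-components pins down $e_f(2/5) = 5\alpha(1-\alpha) e - 5\alpha f_e(0) + (1 - 5\alpha) f_e(2/5)$.

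The crux is to determine $\alpha$, and the main obstacle is that the idempotent identity $f \cd f = 2f$ alone yields only norm-type data such as $(f_e(2/5)|f_e(2/5)) = 2\alpha(1-\alpha)$, which is not sharp enough. To extract a genuine scalar equation, I would compute $f \cd e_f(2/5)$ in two ways: since $e_f(2/5)$ is a $\vir(f)$-primary of weight $2/5$, on the one hand $f \cd e_f(2/5) = \tfrac{2}{5}\, e_f(2/5)$; on the other hand, substituting the explicit $e$-expansion of $e_f(2/5)$ and expanding $f \cd f_e(0)$ and $f \cd f_e(2/5)$ requires two auxiliary Griess identities, namely $f_e(0) \cd f_e(2/5) = \tfrac{5-2\alpha}{5} f_e(2/5)$ together with the vanishing of the $B_e(2/5)$-component of $f_e(2/5) \cd f_e(2/5)$. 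Both drop out by comparing $f \cd f = 2f$ with its image $(\sigma_e f) \cd (\sigma_e f) = 2\sigma_e f$ under the automorphism $\sigma_e$. After this bookkeeping the $B_e(2/5)$-component of the identity collapses to $(3 - 30\alpha)\, f_e(2/5) = 0$, forcing $\alpha = 1/10$ or $f_e(2/5) = 0$.

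To finish, if $f_e(2/5) = 0$ then $2\alpha(1-\alpha) = 0$, and the case $\alpha = 1$ is excluded by distinctness of $e$ and $f$ (using non-degeneracy of the form on $B_e(0)$, which forces $f_e(0) = 0$ and hence $f = e$), leaving $\alpha = 0$ and Case~(1) with $(e|f) = 0$, $e \cd f = 0$, and $B(e,f) = \C e \oplus \C f$. If instead $\alpha = 1/10$, then $(e|f) = 1/25$, the auxiliary identity gives $\sigma_e f = \sigma_f e$, and substituting back into the formula for $e \cd f$ produces $e \cd f = \tfrac{1}{5}(e + f - \sigma_e f)$. The non-vanishing of $f_e(0)$ and $f_e(2/5)$ (both norms $(2/5)(1-\alpha)(1-4\alpha)$ and $2\alpha(1-\alpha)$ are nonzero at $\alpha = 1/10$) together with the three-eigenspace decomposition then yields the linear independence of $e$, $f$, $\sigma_e f$, establishing Case~(2).
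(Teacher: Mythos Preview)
Your argument is correct and follows the same overall strategy as the paper: exploit the orthogonal eigenspace decomposition $B=\C e\oplus B_e(0)\oplus B_e(2/5)$, reduce to a single scalar equation in $\alpha=\tfrac{5}{2}(e|f)$, and split into the two cases. The specific route to the scalar equation differs: the paper expands $f\cdot(f\cdot e)$ and then $f\cdot f=2f$ to obtain $(25(e|f)-1)f_e(2/5)=0$ directly, whereas you pass through the explicit formula for $e_f(2/5)$ and compare two expressions for $f\cdot e_f(2/5)$. Both yield the equivalent equation $(3-30\alpha)f_e(2/5)=0$, and your use of the $\sigma_e$-image of $f\cdot f=2f$ to obtain $f_e(0)\cdot f_e(2/5)=\tfrac{5-2\alpha}{5}f_e(2/5)$ is a nice shortcut. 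One small imprecision: the comparison of $f\cdot f$ with $(\sigma_e f)\cdot(\sigma_e f)$ only yields $[f_e(0)^2+f_e(2/5)^2]_{2/5}=0$, not the vanishing of $[f_e(2/5)^2]_{2/5}$ by itself; the latter follows instead from the fact that $\sigma_e$ is an automorphism, so $f_e(2/5)^2$ is $\sigma_e$-fixed and hence lies in $\C e\oplus B_e(0)$.

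The one point where your argument is weaker than the paper's is the endgame when $f_e(2/5)=0$. You invoke non-degeneracy of the form on $B_e(0)$ to exclude $\alpha=1$, but the paper makes no such assumption. Instead, by symmetry one also has $(25(e|f)-1)e_f(2/5)=0$, so $e_f(2/5)=0$ as well; then $e\cdot f=2\alpha e$ and $f\cdot e=2\alpha f$ force $\alpha=0$ since $e\ne f$. Similarly, for linear independence in case~(2) the paper avoids norm arguments entirely: if $f_e(2/5)=0$ then $\sigma_e f=f$, whence $5f\cdot e=e$ would make $e$ an eigenvector of $f_{(1)}$ with eigenvalue $1/5\notin\{0,2/5,2\}$, a contradiction. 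Your norm computations are fine (nonzero norm implies nonzero vector, no non-degeneracy needed there), but the $\alpha=1$ exclusion should be rewritten along the paper's lines to avoid the extra hypothesis.
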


\begin{proof}
Write $f=\lambda e+f_e(0)+f_e(\sfr{2}{5})$.
Since the decomposition \eqref{eq:bitype} is orthogonal,
$(e|f)=\lambda (e|e)=2\lambda /5$ and $\lambda=5(e|f)/2$.
Then
$$
  e\cd f=5(e|f) e+\dfr{2}{5}f_e(\sfr{2}{5}),~~~~~
  e\cd (e\cd f)= 10(e|f)e+\dfr{4}{25}f_e(\sfr{2}{5}).
$$
Therefore, we have
\begin{equation}\label{eq:3.2}
  e\cd (e\cd f)=8(e|f)e+\dfr{2}{5} e\cd f.
\end{equation}
By exchanging $e$ and $f$, we also have
\begin{equation}\label{eq:3.3}
  f\cd (f\cd e)=8(e|f)f+\dfr{2}{5} e\cd f.
\end{equation}
Then
\begin{equation}\label{eq:3.4}
  f\cd (f\cd e)=25(e|f)^2 e
  +  \underbrace{\dfr{2}{5} f_e(\sfr{2}{5})^2}_{\in \C e \oplus B_e(0)}
  + \underbrace{\dfr{2}{5} \l( f_e(0)\cd f_e(\sfr{2}{5})+6(e|f)f_e(\sfr{2}{5})\r)}_{\in B_e(2/5)} .
\end{equation}
Comparing the $B_e(\sfr{2}{5})$-part of the right hand side of \eqref{eq:3.3} and
that of \eqref{eq:3.4}, we get
\begin{equation}\label{eq:3.5}
  f_e(0)\cd f_e(\sfr{2}{5})=\l( \dfr{2}{5}+14(e|f)\r) f_e(\sfr{2}{5}).
\end{equation}
The $e_{(1)}$-decomposition of $f\cd f$ is:
$$
\begin{array}{ll}
  f\cd f
  &= \l( \dfr{5}{2}(e|f)e +f_e(0)+f_e(\sfr{2}{5})\r)
    \cd \l( \dfr{5}{2}(e|f)e +f_e(0)+f_e(\sfr{2}{5})\r)
  \vsb\\
  &= \dfr{25}{2}(e|f)^2 e +f_e(0)^2 +f_e(\sfr{2}{5})^2
    + \l( 30(e|f)+\dfr{4}{5}\r) f_e(\sfr{2}{5}).
\end{array}
$$
Therefore, by comparing $B_e(\sfr{2}{5})$-parts of $f\cd f=2f$ we obtain
\begin{equation}\label{eq:3.6}
  \l( 25(e|f)-1\r) f_e(\sfr{2}{5})=0.
\end{equation}
By the symmetry, we also have the same equality for $B_f(\sfr{2}{5})$.
\begin{equation}\label{eq:3.7}
  \l( 25(e|f)-1\r) e_f(\sfr{2}{5})=0.
\end{equation}
Suppose $25(e|f)-1\ne 0$.
Then $f_e(\sfr{2}{5})=f_e(\sfr{2}{5})=0$ and $5(e|f)f=e\cd f=f\cd e=5(e|f)e$.
Since $e\ne f$, we have $(e|f)=0$ and $e\cd f=0$.
This gives the statement (1) of the theorem.

So we may assume that $(e|f)=1/25$.
Since $\sigma_e f=f-2f_e(\sfr{2}{5})$, we have
$$
  e\cd f
  = \dfr{1}{5} e+\dfr{2}{5} f_e(\sfr{2}{5})
  = \dfr{1}{5} e+\dfr{2}{5} \cd \dfr{1}{2}(f-\sigma_e f)
  = \dfr{1}{5}(e+f-\sigma_e f).
$$
So we obtain
\begin{equation}\label{eq:3.8}
  \sigma_e f=e+f-5e\cd f=\sigma_f e.
\end{equation}
Therefore $B(e,f)$ is spanned by $e$, $f$ and $\sigma_e f$.
It remains to show that $e$, $f$ and $\sigma_e f$ are linearly independent.
For this, it suffices to show $f_e(\sfr{2}{5})\ne 0$.
Suppose contrary and $f_e(\sfr{2}{5})=0$.
Then $\sigma_e f=f$ and from \eqref{eq:3.8} we obtain $5f\cd e=e$.
Since $f$ has eigenvalues $0$, $2/5$ and $2$ in $B$, this is a contradiction.
Hence $B(e,f)$ is 3-dimensional when $(e|f)=1/25$.
This completes the proof.
\end{proof}

The next theorem is again proved by Matsuo \cite{Ma2}.
\begin{thm}
  Let $E$ be a set of simple extendable $c=4/5$ Virasoro vectors of
  $\sigma$-type in $V$.
  Let $D=\{\sigma_e\mid e\in E\}$ the set of $\sigma$-involutions associated
  to $E$ and $G$ the subgroup of $Aut(V)$ generated by $D$.
  Then $(G,D)$ is a $3$-transposition group, i.e., the order of
  $\sigma_e\sigma_f$ is bounded by $3$ for all $e,f\in E$.
\end{thm}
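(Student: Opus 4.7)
The plan is to pair Theorem \ref{thm:3.4} with the standard conjugation formula for VOA-theoretic involutions. First I would establish that for any $g\in\aut(V)$ and any $e\in E$, the image $g(e)$ is again a simple extendable $c=4/5$ Virasoro vector of $\sigma$-type and that
\[
  \sigma_{g(e)}=g\,\sigma_e\,g^{-1}.
\]
This is a definitional check: $g$ carries $\vir(e)$ onto $\vir(g(e))$ and each isotypical component $V_e[h]$ onto $V_{g(e)}[h]$, so the two sign-operators are intertwined by $g$. Applying this with $g=\sigma_e$ (so $g^{-1}=g$) gives $\sigma_e\sigma_f\sigma_e=\sigma_{\sigma_e(f)}$, and symmetrically $\sigma_f\sigma_e\sigma_f=\sigma_{\sigma_f(e)}$.

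Next I would feed in the dichotomy of Theorem \ref{thm:3.4}. In case (1), where $(e|f)=0$ and $e\cdot f=0$, writing $f=\lambda e+f_e(0)+f_e(\sfr{2}{5})$ forces $\lambda=0$ and $f_e(\sfr{2}{5})=0$; hence $f\in B_e(0)\subset V_e[0]$, on which $\sigma_e$ acts as the identity, so $\sigma_e(f)=f$. The conjugation formula then collapses to $\sigma_e\sigma_f\sigma_e=\sigma_f$, giving $(\sigma_e\sigma_f)^2=1$. In case (2), Theorem \ref{thm:3.4} supplies $\sigma_e(f)=\sigma_f(e)$ directly, so the two conjugation formulas combine into the braid relation $\sigma_e\sigma_f\sigma_e=\sigma_f\sigma_e\sigma_f$; together with the involution property this forces $(\sigma_e\sigma_f)^3=1$. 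Either way, the order of $\sigma_e\sigma_f$ is at most $3$.

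The main obstacle is not the final algebra but verifying that the $\sigma$-type hypothesis, not merely the central charge, is preserved by automorphisms, so that $\sigma_{g(e)}$ is actually defined and coincides with $g\sigma_e g^{-1}$. Here I would invoke Lemma \ref{lem:2.9}: the $\sigma$-type condition on $e$ is equivalent to the triviality of $\xi_U$ for the extended sub VOA $U=\langle e,w\rangle\cong\W(\sfr{4}{5})$ generated by $e$ and its $3$-primary vector $w$. Since $g$ sends $U$ isomorphically onto $\langle g(e),g(w)\rangle$ and intertwines the associated isotypical decomposition of $V$, $\xi_{g(U)}$ is trivial whenever $\xi_U$ is, and $g(e)$ inherits extendability from $e$ via $g(w)$. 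With this in place the argument becomes purely group-theoretic and needs no further Griess-algebra computation.
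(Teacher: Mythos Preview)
Your proposal is correct and follows essentially the same route as the paper: invoke Theorem~\ref{thm:3.4} to split into the cases $(e|f)=0$ and $(e|f)=1/25$, use the conjugation formula $\sigma_{g(e)}=g\sigma_e g^{-1}$ to rewrite $\sigma_e\sigma_f\sigma_e$ as $\sigma_{\sigma_e f}$, and then read off $(\sigma_e\sigma_f)^2=1$ or the braid relation from $\sigma_e f=f$ or $\sigma_e f=\sigma_f e$, respectively. The only difference is that you spell out the justification for $\sigma_e f=f$ in the orthogonal case and for the conjugation formula (including the preservation of the extendable $\sigma$-type condition via Lemma~\ref{lem:2.9}), whereas the paper uses these facts without comment.
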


\begin{proof}
Let $e, f$ be distinct elements in $E$.
Then $(e|f)=0$ or $(e|f)=1/25$.
If $(e|f)=0$, then $\sigma_e f=f$ and
$\sigma_f=\sigma_{\sigma_e f}=\sigma_e\sigma_f\sigma_e$.
Therefore $\sigma_e$ commutes with $\sigma_f$.
If $(e|f)=1/25$, then $\sigma_e f=\sigma_f e$ and we have
\[
  (\sigma_e\sigma_f)^3
  = (\sigma_e\sigma_f\sigma_e)(\sigma_f\sigma_e\sigma_f)
  = \sigma_{\sigma_e f}\sigma_{\sigma_{f}e} =1.
\]
Therefore, $G$ is a 3-transposition group.
\end{proof}

\begin{thm}
Let $E$ be the set of simple extendable $c=4/5$ Virasoro vectors
of $\sigma$-type in $V$.
Then the map
\[
\begin{array}{cccc}
  \sigma: &E &\longto& \aut(V)
  \\
  & e& \longmapsto& \sigma_e
\end{array}
\]
is injective.
\end{thm}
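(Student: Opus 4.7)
Suppose, for contradiction, that $e\ne f$ are in $E$ with $\sigma_e=\sigma_f$; the strategy is to pin down how this common involution acts on the $3$-primary vector $w_e$ associated to $e$ and to derive $e=f$ from the constraints this places on the $\vir(f)$-module structure. Applying $\sigma_e=\sigma_f$ to $f$ yields $\sigma_e(f)=f$; combined with the identity $\sigma_e(f)=f-2f_e(\sfr{2}{5})$ used in the proof of Theorem~\ref{thm:3.4}, this forces $f_e(\sfr{2}{5})=0$, so by that theorem we are in case (1): $(e|f)=0$ and $e\cd f=0$. Since $\vir(e)\cong L(\sfr{4}{5},0)$ is rational, the $\vir(e)$-submodule generated by $f$ is irreducible of type $L(\sfr{4}{5},0)$, which gives $L^e(-1)f=0$; Borcherds' commutator formula then shows that $\vir(e)$ and $\vir(f)$ commute as subalgebras of $V$.

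The crucial step is to show that $w_e$ is itself a $\vir(f)$-primary vector of weight $3$. Since $\sigma_e(w_e)=-w_e=\sigma_f(w_e)$, one has $w_e\in V_f[3]\oplus V_f[\sfr{2}{5}]$. The commutativity of $\vir(e)$ and $\vir(f)$ implies that each $L^f(n)w_e$ is again a $\vir(e)$-primary of weight $3$; for $n\ge 1$ this would place a weight-$3$ $\vir(e)$-primary at VOA weight $3-n<3$, which is impossible because the commutant of $\vir(e)$ in $V$ is non-negatively graded. Hence $L^f(n)w_e=0$ for all $n\ge 1$. For $n=0$, Lemma~\ref{lem:2.11} (applied to $e$) identifies the space of $\vir(e)$-primaries of weight $3$ at VOA weight $3$ with $\C w_e$, so $L^f(0)w_e=\mu w_e$ for some scalar $\mu$. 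Thus $w_e$ is a $\vir(f)$-primary of weight $\mu$; since $f$ is of $\sigma$-type and $\sigma_f(w_e)=-w_e$, we must have $\mu\in\{\sfr{2}{5},3\}$.

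To finish, the primary commutator $[L^f(0),w_{e(3)}]=(\mu-4)w_{e(3)}$ gives $L^f(0)(w_{e(3)}w_e)=(2\mu-4)w_{e(3)}w_e$. For $\mu=\sfr{2}{5}$ this eigenvalue is $-\sfr{16}{5}$, which is incompatible with $V$ being a positive-energy $\vir(f)$-module, so it would force $w_{e(3)}w_e=0$; but the universal OPE of a weight-$3$ Virasoro primary at $c=\sfr{4}{5}$ yields $w_{e(3)}w_e=\tfrac{15}{2}(w_e|w_e)e$, which is nonzero by extendability of $e$ (ensuring $(w_e|w_e)\ne 0$). Hence $\mu=3$, and then $w_{e(3)}w_e$ lies in the $L^f(0)=2$ eigenspace of the Griess algebra $V_2$, which by Lemma~\ref{lem:2.8} equals $\C f$. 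Equating the two expressions for $w_{e(3)}w_e$ forces $e\propto f$, and the relation $e\cd e=2e$ then gives $e=f$, the desired contradiction.

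The main obstacle is the middle step---showing that $w_e$ is a single $L^f(0)$-eigenvector. This relies on all of the commutativity of $\vir(e)$ and $\vir(f)$, the non-negative grading of the commutant of $\vir(e)$ in $V$ (so that a weight-$3$ $\vir(e)$-primary cannot occur at VOA weight less than $3$), and the uniqueness statement in Lemma~\ref{lem:2.11}. Once $\mu=3$ is pinned down, the conclusion follows routinely from the universal weight-$3$ primary OPE together with Lemma~\ref{lem:2.8}.
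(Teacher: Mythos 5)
Your reduction to the orthogonal case is correct, and the middle portion of your argument is essentially salvageable, although the justification for $L^f(n)w_e=0$, $n\ge 1$, should be rerouted: $L^f(1)w_e$ is not an element of the commutant of $\vir(e)$ but a would-be weight-$3$ $\vir(e)$-primary inside the Griess algebra, and it vanishes because $e_{(1)}$ acts on $V_2$ with eigenvalues $2,0,\sfr{2}{5}$ only, by the decomposition $B=B_e(2)\oplus B_e(0)\oplus B_e(\sfr{2}{5})$ of \eqref{eq:bitype} (similarly $L^f(2)w_e\in V_1=0$ and $L^f(3)w_e\in\C\vac$ is killed since $e_{(1)}\vac=0$). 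The genuine gap is in your last paragraph. The identity $[L^f(0),w_{e(3)}]=(\mu-4)w_{e(3)}$ is false: Borcherds' formula gives $[f_{(1)},w_{e(3)}]=(L^f(-1)w_e)_{(4)}+\mu\,w_{e(3)}$, and $(L^f(-1)w_e)_{(4)}$ does not reduce to $-4w_{e(3)}$ --- that simplification uses the translation axiom $(L(-1)a)_{(n)}=-n\,a_{(n-1)}$, which is available only for the zero mode of the \emph{full} conformal vector, not for $L^f(-1)$. Consequently $w_{e(3)}w_e$ is not an $L^f(0)$-eigenvector of eigenvalue $2\mu-4$, and indeed your own computation shows the true eigenvalue is $0$: $w_{e(3)}w_e$ is a nonzero multiple of $e$, and $L^f(0)e=f\cd e=0$. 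This is perfectly consistent with $\mu=\sfr{2}{5}$, since the fusion rule $L(\sfr{4}{5},\sfr{2}{5})\fusion L(\sfr{4}{5},\sfr{2}{5})=L(\sfr{4}{5},0)+L(\sfr{4}{5},\sfr{7}{5})$ allows $w_{e(3)}w_e$ to land in $V_f[0]$. So the dangerous case $\mu=\sfr{2}{5}$ --- precisely the one compatible with $\sigma_f w_e=-w_e$ --- is not excluded, and no contradiction is reached.

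The paper's proof avoids this analysis entirely. In the non-orthogonal case it just notes $\sigma_e f\neq f=\sigma_f f$. In the orthogonal case it observes that the whole subalgebra $\la e,w^e\ra\cong\W(\sfr{4}{5})$ lies in $\com(\vir(f))$, hence in $V_f[0]$, so that $\sigma_f w^e=w^e\neq -w^e=\sigma_e w^e$ directly (no contradiction argument needed). If you wish to repair your version along your own lines, the statement you must actually prove is $f_{(0)}w_e=0$, i.e.\ $w_e\in\com(\vir(f))$, which is incompatible with $\mu\in\{\sfr{2}{5},3\}$; the OPE coefficient $w_{e(3)}w_e$ by itself cannot distinguish these cases.
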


\begin{proof}
Let $e\in E$ and $w^e$ its unique $3$-primary vector.
Then $\sigma_e w^e=-w^e$ and $\sigma_e$ is non-trivial.
Now let $f\in E$ be another element.
If $(e|f)=0$, then $e\cdot f=0$ and $e$ and $f$ are mutually commutative.
In this case the subalgebra generated by $e$ and $w^e$ is in the commutant of $\vir(f)$.
Thus, $\sigma_f w^e=w^e$ and $\sigma_e\neq \sigma_f$.
If $(e|f)\neq 0$ then we have $\sigma_f f=f\neq \sigma_e f$ and $\sigma_e \ne \sigma_f$.
Therefore, $\sigma$ is injective.
\end{proof}

\begin{thm}
Let $E$ be a set of simple extendable $c=4/5$ Virasoro vector of $\sigma$-type.
Let $G=\la \sigma_e\mid e\in E\ra$ be the group generated by the
corresponding $\sigma$-involutions.  Then $G$ is centerfree on the sub VOA
generated by $E$.
\end{thm}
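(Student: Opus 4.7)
The plan is to prove that every element $g$ in the center $Z(G)$ fixes the sub VOA $\la E\ra$ generated by $E$ pointwise. Since $g$ is a VOA automorphism and $\la E\ra$ is generated (as a VOA) by the Virasoro vectors $e\in E$, it suffices to verify that $ge=e$ for each $e\in E$; the fact that $g$ preserves modes $a_{(n)}b$ then automatically propagates to all of $\la E\ra$.

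The main ingredient is the naturality of the $\sigma$-construction under automorphisms, i.e., the conjugation formula
\[
  \varphi\,\sigma_e\,\varphi^{-1}=\sigma_{\varphi(e)}
\]
for every $\varphi\in\aut(V)$ and every simple extendable $c=4/5$ Virasoro vector $e$ of $\sigma$-type. This holds because $\varphi$ carries the $\vir(e)$-isotypical decomposition to the $\vir(\varphi(e))$-one, i.e., $\varphi V_e[h]=V_{\varphi(e)}[h]$ for every $h\in\{0,2/5,2/3,7/5,3,1/15\}$, so the $\pm 1$-eigenspaces of $\varphi\sigma_e\varphi^{-1}$ coincide with those of $\sigma_{\varphi(e)}$. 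Note also that $\varphi(e)$ is again a simple extendable $c=4/5$ Virasoro vector of $\sigma$-type, so $\sigma_{\varphi(e)}$ is defined and the injectivity statement of the preceding theorem applies to it.

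Given $g\in Z(G)$, the element $g$ commutes with every generator $\sigma_e$ ($e\in E$) of $G$, so the conjugation formula yields $\sigma_{ge}=g\sigma_e g^{-1}=\sigma_e$ for each $e\in E$. Injectivity of the map $u\mapsto\sigma_u$ then forces $ge=e$ for every $e\in E$, and hence $g$ acts as the identity on $\la E\ra$. This establishes that $Z(G)$ acts trivially on the sub VOA generated by $E$, i.e., the induced action of $G$ on $\la E\ra$ factors through $G/Z(G)$, which is the centerfree property asserted.

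The argument is essentially formal once the conjugation formula is recorded; the real content has already been supplied by the injectivity theorem immediately preceding. I do not foresee any serious obstacle — the only step requiring care is the verification that $\sigma$-type and extendability are preserved under $\aut(V)$, which is immediate from the definitions.
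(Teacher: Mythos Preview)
Your proof is correct and follows essentially the same approach as the paper: take $g\in Z(G)$, use the conjugation formula $g\sigma_e g^{-1}=\sigma_{ge}$ together with centrality to obtain $\sigma_{ge}=\sigma_e$, and then invoke the injectivity of $e\mapsto\sigma_e$ from the preceding theorem to conclude $ge=e$ for all $e\in E$. The paper's proof is terser but identical in substance; your additional remarks justifying the conjugation formula and the preservation of ``extendable of $\sigma$-type'' under $\aut(V)$ simply make explicit what the paper takes for granted.
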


\begin{proof}
Let $g\in Z(G)$. Then $\sigma_{ge}=g \sigma_e g^{-1} = \sigma_e$ for all $e\in
E$. Hence we have $ge=e$ by the previous theorem and $g$ acts as identity on
the sub VOA generated by $E$.
\end{proof}

\begin{rem}
An element $e\in E$ may no longer  be extendable on the sub VOA generated by $E$.
Nevertheless, the decomposition in \eqref{eq:bitype} is still valid.
\end{rem}



\section{Griess algebras generated by $2$ or $3$ Virasoro vectors}

Let $E$ be a set of simple extendable $c=4/5$ Virasoro vectors of $\sigma$-type and
let $G=\la \sigma_e\mid e\in E\ra$ be the subgroup generated by the corresponding
$\sigma$-involutions.
We will discuss the structures of the Griess algebras generated by $2$ or $3$
Virasoro vectors of $\sigma$-type.

\subsection{Griess algebra generated by two Virasoro vectors}
Let $e_1, e_2$ be two distinct simple extendable $c=4/5$ Virasoro vectors
of $\sigma$-type and $B$ the Griess algebra generated by $e_1$ and $e_2$.
Then by Theorem \ref{thm:3.4} $(e_1|e_2)=0$ or $1/25$.
\medskip

\paragraph{Case 1:}
$(e_1|e_2)=0$.
Then $e_1\cdot e_2=0$ and $G=\la \sigma_{e_1}, \sigma_{e_2}\ra \cong \Z_2\times \Z_2$.

In this case, $B=\Span\{ e_1, e_2\}$ has dimension $2$.
The conformal vector (i.e., 2 times the identity element in $B$) is
$\om = e_1+e_2$ and the central charge is $8/5$.

\medskip

\paragraph{Case 2:}
$(e_1|e_2)=1/25$.
Then $\sigma_{e_1}\sigma_{e_2}$ has order 3 and
$G=\la \sigma_{e_1}, \sigma_{e_2}\ra \cong \mathrm{S}_3$.

Let $e_3=\sigma_{e_1}e_2 = \sigma_{e_2}e_1$.
Then the Griess algebra $B=\Span\{ e_1, e_2, e_3\}$ and the multiplication is
given by
\[
  e_i\cdot e_j =
  \begin{cases}
    2e_i & \text{ if } i=j,
    \vsb\\
    \dfr{1}{5} (e_i+e_j-e_k) & \text{ if }  \{i,j,k\}=\{1,2,3\}.
  \end{cases}
\]
The conformal vector is $\om = 6(e_1+e_2+e_3)/5$ and the central charge is $2$.

Let $\eta=\om -e_1$.
Then $\eta$ is a Virasoro vector of central charge $6/5$.
Set $b=(e_2-e_3)$.
Then we have
\[
  {e_1}_{(1)}b
  = \frac{1}5\{ ( e_1+e_2-e_3)- (e_1+e_3-e_2)\}
  = \frac{2}5( e_2-e_3)
  =\frac{2}5 b,
\]
and
\[
  \eta_{(1)} b = (\om -e_1)_{(1)} b =\frac{8}5 b.
\]
Thus, $b$ is a highest weight vector of weight $(2/5, 8/5)$ of
the sub VOA $\vir(e_1)\otimes \vir(\eta)$.

\begin{rem}
  Note that the Griess algebra $B$ studied in Case 2 is isomorphic to the
  Griess algebra of $V_{\sqrt{2}A_2}^{\hat{\au}}$ (see Section \ref{sec:3}).
\end{rem}

\subsection{Griess algebras generated by three Virasoro vectors}

Let $V$ be a VOA of OZ-type and $e$, $f$, $g$ three distinct simple extendable
$c=4/5$ Virasoro vectors of $\sigma$-type.
Assume that $(e|f)=(f|g)=1/25$ and $g\notin B(e,f)$,
the Griess subalgebra generated by $e$ and $f$.
Then $G=\la \sigma_e, \sigma_f, \sigma_g\ra $ is a center-free $3$-transposition
group acting on the Griess algebra $B(e,f,g)$ generated by $e$, $f$ and $g$.
By \cite{CH}, $G\cong \mathrm{S}_4$ or $3^2{:}2$.

\medskip

\paragraph{Case 1:}
$G\cong \mathrm{S}_4$.
In this case, the $\sigma$-involutions correspond to the transpositions and
there are six transpositions.
Let $e_{ij}$ be the Virasoro vector associated to the transposition $(ij)$ for
$1 \leq i< j\leq 4$.
The Griess algebra $B=\Span\{ e_{ij}\mid  1 \leq i< j\leq 4\}$ and the multiplication
is given by
\[
  e_{ij}\cdot e_{k\ell} =
  \begin{cases}
    2e_{ij} & \text{ if }~ \{i,j\}=\{k,\ell\},
    \vsb\\
    \dfrac{1}{5} (e_{ij}+e_{k\ell} -e_{j\ell}) 
    & \text{ if }~   \abs{\{ i,j\} \cap \{ k,\ell\}}=1,
    \vsb\\
    0 &\text{ otherwise.}
  \end{cases}
\]
The conformal vector is 
$$
  \om = \dfr{5}{7}(e_{12}+e_{13}+e_{14}+e_{23}+e_{24}+e_{34})
$$
and the central charge is $24/7$.

\begin{rem}
In the next section, we will show that the above Griess algebra can be
realized in the VOA $V_{A_2\otimes A_3}^{\hat{\au}}$.
\end{rem}

\medskip

\paragraph{Case 2:}
$G\cong 3^2{:}2$.
In this case, $G$ has nine involutions.
Let $e_{00}$, $e_{01}$, $e_{02}$, $e_{10}$, $e_{11}$, $e_{12}$, $e_{20}$, $e_{21}$,
$e_{22}$ be the corresponding $c=4/5$ Virasoro vectors.
Then
\[
B=\Span\{e_{00}, e_{01}, e_{02}, e_{10}, e_{11}, e_{12}, e_{20}, e_{21}, e_{22}\}
\]
and the multiplication is given by
\[
  e_{ij}\cdot e_{k\ell}=
  \begin{cases}
    2e_{ij} & \text{ if } i=k,~ j=\ell,
    \\
    \dfrac{1}{5} ( e_{ij}+e_{k\ell}- e_{rs}) & \text{ otherwise, }
  \end{cases}
\]
where $i+k+r\equiv j+\ell+s\equiv 0 \mod 3$.  The conformal vector is
\[
\om= \frac{5}9( e_{00}+e_{01}+ e_{02}+ e_{10}+ e_{11}+ e_{12}+ e_{20}+ e_{21}+ e_{22})
 \]
 and the central charge is $4$.

\begin{rem}
In the next section, we will see that the above Griess algebra can be
realized in the VOA $V_{A_2\otimes A_2}^{\hat{\au}}$.
\end{rem}

\section{Some explicit examples}
In this section, we will study some explicit examples using lattice VOA.

\begin{nota}\label{hatL}
Let $L$ be a positive definite even lattice. 
We use $\hat{L}=\{\pm \ee^\al\mid \al\in L\}$ to denote the central extension 
of $L$ by $\pm 1$ such that $\ee^{\al} \ee^\be = (-1)^{(\al, \be)} \ee^{\be}\ee^\al$.
The twisted group algebra $\C \{ L\}$ is a central product of $\C$ and $\hat{L}$ 
over $\{ \pm 1\}$.
The lattice VOA has a structure $V_L=M(1)\tensor \C\{ L\}$ where
$M(1)$ is a free bosonic VOA associated to $\C L$ 
(cf.~\cite{FLM}).
For $\alpha \in L$ we denote $\alpha_{(-1)}\vac\in M(1)$ simply by $\alpha$.
\end{nota}

\subsection{Virasoro vectors in lattice type VOA $V_{\sqrt{2}A_2}^{\hat{\au}}$}\label{sec:3}

First we recall a construction of simple extendable Virasoro
vectors of $c=4/5$ from Dong et al.~\cite{DLMN}.

Let $\left\{ \alpha _{1},\alpha _{2}\right\} $ be a set of simple roots of a
lattice of type $A_{2}$ and put $\alpha_0=-(\alpha_1+\alpha_2)$.
Consider the lattice VOA $V_{\sqrt{2}A_2}$ associated to $\sqrt{2}A_2$.
Since $\sqrt{2}A_2$ is doubly even, we can use the usual group algebra 
$\C [\sqrt{2}A_2]$ instead of the twisted group algebra $\C\{ \sqrt{2}A_2\}$.
Namely, we put $V_{\sqrt{2}A_2}=M(1)\tensor \C [\sqrt{2}A_2]$ where $M(1)$ is 
the free bosonic VOA associated to $\C\tensor (\sqrt{2}A_2)$.

\begin{lem}[\cite{DLMN}]
  Let $\w$ be the conformal vector of $V_{\sqrt{2}A_2}$.
  The element
  \begin{equation}\label{conformal}
    u^0
    =\dfr{2}{5}\w 
    +\dfr{1}{5}\dsum_{i=0}^2 \l( \ee^{\sqrt{2}\alpha_i}+ \ee^{-\sqrt{2}\alpha_i}\r)
  \end{equation}
  is a simple $c=4/5$ Virasoro vector of $V_{\sqrt{2}A_2}$.
\end{lem}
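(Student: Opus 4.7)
The plan is to verify directly that $u^0$ satisfies the Virasoro vector axioms of central charge $4/5$ in $V_{\sqrt{2}A_2}$, and then to establish simplicity of $\vir(u^0)$. The conformal weight check is immediate: $\w \in V_2$ and each $\ee^{\pm\sqrt{2}\alpha_i}$ has weight $|\sqrt{2}\alpha_i|^2/2 = 2$, so $u^0 \in V_2$.

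For the normalization $u^0_{(3)} u^0 = (2/5)\vac$, I would expand bilinearly and use the lattice product formula $Y(\ee^\al, z)\ee^\beta = \epsilon(\al,\beta) z^{(\al,\beta)} E^-(-\al,z)E^+(-\al,z)\ee^{\al+\beta}$ with trivial cocycle, since $\sqrt{2}A_2$ is doubly even. Only the contributions $\w_{(3)}\w = \vac$ (using central charge $\rank A_2 = 2$) and the six diagonal pairings $\ee^{\pm\sqrt{2}\alpha_i}_{(3)}\ee^{\mp\sqrt{2}\alpha_i} = \vac$ survive, because the cross terms $\w_{(3)}\ee^{\pm\sqrt{2}\alpha_i} = L(2)\ee^{\pm\sqrt{2}\alpha_i}$ vanish on pure exponential vectors, and all remaining exponential pairings have $(\al,\beta) > -4$. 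Summing gives $(2/5)^2\vac + 6(1/5)^2\vac = (2/5)\vac$, confirming $c_{u^0} = 4/5$.

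The main step is $u^0_{(1)} u^0 = 2u^0$. Beyond the routine $\w_{(1)}\w = 2\w$ and $\w_{(1)}\ee^{\pm\sqrt{2}\alpha_i} = 2\ee^{\pm\sqrt{2}\alpha_i}$, the exponential-exponential part splits into three cases. Opposite-sign same-index pairings yield Heisenberg expressions that, after summation over $i = 0,1,2$, reassemble into a multiple of $\w$ via the identity $\w = (1/6)\sum_i \alpha_i(-1)^2 \vac$ for $A_2$ together with the cancellation $\sum_i \alpha_i = 0$. Same-sign distinct-index pairings produce a single exponential $\ee^{\mp\sqrt{2}\alpha_k}$ using $\alpha_i + \alpha_j = -\alpha_k$, and the six ordered such pairs redistribute evenly among the three indices. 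Opposite-sign distinct-index pairings vanish because $(\al,\beta) > 0$. Assembling all contributions gives $u^0_{(1)}u^0 = (4/5)\w + (2/5)\sum_i(\ee^{\sqrt{2}\alpha_i}+\ee^{-\sqrt{2}\alpha_i}) = 2u^0$. This bookkeeping and precise coefficient matching is the principal obstacle of the proof.

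Finally, simplicity $\vir(u^0)\cong L(4/5,0)$ follows from unitarity: the standard invariant form on $V_{\sqrt{2}A_2}$ is positive definite, so the Virasoro subalgebra generated by $u^0$ must factor through the unique unitary irreducible vacuum Virasoro VOA at central charge $c = c_3 = 4/5$, namely $L(4/5,0)$.
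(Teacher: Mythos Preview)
Your direct verification is correct, and there is nothing to compare it against: the paper does not prove this lemma but simply cites \cite{DLMN}. Your computation of $u^0_{(3)}u^0=(2/5)\vac$ and $u^0_{(1)}u^0=2u^0$ via the lattice vertex operator formulas with trivial cocycle is the standard route, and the simplicity argument via positive-definiteness of the invariant form on $V_{\sqrt{2}A_2}$ is valid. One small remark: the $\alpha_i(-2)$ terms coming from $\ee^{\sqrt{2}\alpha_i}_{(1)}\ee^{-\sqrt{2}\alpha_i}$ already cancel against those from $\ee^{-\sqrt{2}\alpha_i}_{(1)}\ee^{\sqrt{2}\alpha_i}$ for each fixed $i$, so the relation $\sum_i\alpha_i=0$ is not actually needed for that step (though it gives an alternative way to see the same cancellation).
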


Let $\au$ be the isometry of $A_2$ defined by
\begin{equation}\label{eq:5.2}
  \au: \al_1\longmapsto \al_2\longmapsto \al_0\longmapsto \al_1.
\end{equation}
Then $\au$ lifts to an automorphism $\hat{\au}$ of $V_{\sqrt{2}A_2}$ such that
\[
   \ee^{\pm \sqrt{2}\al_1}
   \longmapsto \ee^{\pm \sqrt{2}\al_2}
   \longmapsto \ee^{\pm \sqrt{2}\al_0}
   \longmapsto \ee^{\pm \sqrt{2}\al_1}.
\]
We also consider a lift $\theta \in \aut(V_{\sqrt{2}A_2})$ of the 
$(-1)$-map on $\sqrt{2}A_2$:
$$
  \theta : \ee^{\beta}\longmapsto \ee^{-\beta}~~\mbox{ for }~~\beta \in \sqrt{2}A_2.
$$
One can easily check that $\theta$ commutes with $\hat{\au}$.
The following result can be found in \cite{KMY}.

\begin{lem}
  The simple $c=4/5$ Virasoro vector $u^0$ defined above is extendable, 
  of $\sigma$-type, and fixed by $\theta$ and $\hat{\au}$.
\end{lem}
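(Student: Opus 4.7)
The statement has three parts: that $u^0$ is fixed by $\theta$ and $\hat{\nu}$, that $u^0$ is extendable, and that $u^0$ is of $\sigma$-type. The first follows immediately by inspection of \eqref{conformal}. The conformal vector $\omega$ of a lattice VOA is fixed by every lift of a lattice isometry, while the sum $\sum_{i=0}^{2}(\ee^{\sqrt{2}\alpha_i} + \ee^{-\sqrt{2}\alpha_i})$ is manifestly invariant under $\theta$ (which swaps the two exponentials in each bracket) and under $\hat{\nu}$ (which cyclically permutes the index $i$).

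For extendability, I would exhibit an explicit highest-weight vector $w^0 \in (V_{\sqrt{2}A_2})_3$ of weight $3$ for $\vir(u^0)$. The weight-$3$ subspace is finite dimensional and spanned by standard monomials in Heisenberg modes $\alpha_{(-k)}$ applied to $\vac$ or to the lattice vectors $\ee^{\pm\sqrt{2}\alpha_i}$. I would take a $\hat{\nu}$- and $\theta$-symmetric ansatz and solve the finite linear system $u^0_{(1)} w^0 = 3 w^0$ and $u^0_{(n)} w^0 = 0$ for $n \geq 2$; this determines $w^0$ up to scalar. Next I would verify by a direct OPE computation that $w^0_{(n)} w^0$ lies in $\vir(u^0)$ for all $n \geq 0$, showing that $u^0$ together with $w^0$ generates a $\Z_2$-graded extension of $L(4/5, 0)$ by the simple current $L(4/5, 3)$. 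Invoking the uniqueness statement of Theorem \ref{thm:2.9}, this extension must be isomorphic to $\W(4/5)$, establishing extendability.

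For the $\sigma$-type claim, write $U$ for the copy of $\W(4/5)$ just constructed. By Lemma \ref{lem:2.9} it suffices to prove that $\xi_U$ acts trivially on $V_{\sqrt{2}A_2}$, equivalently that neither $L(4/5, 2/3)$ nor $L(4/5, 1/15)$ occurs as a $\vir(u^0)$-submodule. Since $L(4/5, 0)$ is rational, $V_{\sqrt{2}A_2}$ decomposes as $\bigoplus_{h} L(4/5, h) \otimes T_h$, where the multiplicity spaces $T_h$ form modules for the commutant $T := \com(\vir(u^0), V_{\sqrt{2}A_2})$ of central charge $2 - 4/5 = 6/5$. The plan is to compute the graded character $\Theta_{\sqrt{2}A_2}(q)/\eta(q)^2$ and match it against the Rocha--Caridi characters of the irreducible $L(4/5, h)$-modules, reading off which $h$ actually occur. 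The $\hat{\nu}$-symmetry provides a useful tool: since both $u^0$ and the chosen $w^0$ are $\hat{\nu}$-fixed, $\hat{\nu}$ commutes pointwise with $U$ and hence acts on each isotypical component; the restriction of $\hat{\nu}$ to a hypothetical component of type $L(4/5, 2/3)$ or $L(4/5, 1/15)$ can then be compared with $\xi_U$ via fusion considerations and shown to force vanishing.

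The main obstacle is this final character/isotypical decomposition, which is essentially the parafermion/coset identity underlying the construction in \cite{DLMN}. Steps one and two are routine finite-dimensional computations in low-weight subspaces of $V_{\sqrt{2}A_2}$, whereas step three requires either a direct character match or the invocation of the full decomposition of $V_{\sqrt{2}A_2}$ as a module over $L(4/5,0)\tensor T$, which is where the substantive content of the lemma lies.
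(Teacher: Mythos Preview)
Your treatment of the first two claims aligns with the paper. For extendability the paper writes down the explicit $3$-primary vector
\[
q = \tfrac{1}{9}\, \beta_{(-1)} (\hat{\au} \beta)_{(-1)} (\hat{\au}^2\beta)_{(-1)}\vac - \tfrac{1}{2} \sum_{i=0}^2 \hat{\au}^i\bigl( \beta_{(-1)}( \ee^{\sqrt{2}\alpha_0}+ \ee^{-\sqrt{2}\alpha_0})\bigr),\qquad \beta=\alpha_1-\alpha_2,
\]
rather than solving for it; note that this $q$ is $\hat{\au}$-fixed but $\theta$-\emph{anti}-invariant, not $\theta$-invariant as your ansatz suggests (consistent with the later identification $\sigma_{u^0}=\theta$, which must negate the $3$-primary vector).

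For the $\sigma$-type claim the paper does not attempt a direct argument: it simply invokes Lemma~4.1 of \cite{KMY}, which gives the full $\vir(u^0)$-decomposition of $V_{\sqrt{2}A_2}$ and shows that only the highest weights $0$, $2/5$, $7/5$, $3$ occur. Your proposed character-matching route has a genuine gap: knowing the total character $\Theta_{\sqrt{2}A_2}(q)/\eta(q)^2$ together with the Rocha--Caridi characters $\chi_{L(4/5,h)}$ does not by itself determine which $h$ appear, since the multiplicity spaces $T_h$ are modules over a commutant of central charge $6/5$ (not in the unitary minimal series) whose module characters you do not know in advance. The sketch comparing $\hat{\au}$ with $\xi_U$ via ``fusion considerations'' is too vague to force vanishing of the $2/3$ and $1/15$ components on its own. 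You correctly diagnose that the substantive content is the full $L(4/5,0)\otimes T$-decomposition; the paper's route is simply to cite it from \cite{KMY} rather than rederive it.
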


\begin{proof}
It is clear that $u^0$ is fixed by $\theta$ and $\hat{\au}$.
Let $\beta=\alpha_1-\alpha_2$ and define
\[
  q 
  =  \frac{1}{9} \beta_{(-1)} (\hat{\au} \beta)_{(-1)} (\hat{\au}^2\beta)_{(-1)}\vac
   -\dfr{1}{2} \dsum_{i=0}^2 \hat{\au}^i\l( \beta_{(-1)}\left( \ee^{\sqrt{2}\alpha_0}+ \ee^{-\sqrt{2}\alpha_0}\right)\r) .
\]
Then $q$ is a highest weight vector of $u^0$ of weight $3$.
Clearly $q$ is fixed by $\hat{\au}$, whereas $\theta$ negates it 
since $\theta$ commutes with $\hat{\au}$.
It is shown in Lemma 4.1 of \cite{KMY} that highest weights of irreducible 
$\vir(u^0)$-submodules of $V_{\sqrt{2}A_2}$ are 0, 2/5, 7/5 and 3.
Therefore $u^0$ is of $\sigma$-type.
\end{proof}

By this proposition, we can consider $\sigma_{u^0}$ in $\aut(V_{\sqrt{2}A_2})$.

\begin{lem}\label{lem:5.4}
  $\sigma_{u^0}=\theta$ in $\aut(V_{\sqrt{2}A_2})$.
\end{lem}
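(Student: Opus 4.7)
The plan is to show $\tau:=\sigma_{u^0}\theta=\id$; since both $\sigma_{u^0}$ and $\theta$ are involutions, this is equivalent to the claim of the lemma. From the preceding lemma we have $\theta u^0=u^0$ and $\theta q=-q$, while $\sigma_{u^0}u^0=u^0$ and $\sigma_{u^0}q=-q$ (since $q\in V_{u^0}[3]$, which by Lemma \ref{lem:2.11} is an irreducible $\vir(u^0)$-submodule generated by $q$, and $\sigma_{u^0}$ acts on $V_{u^0}[3]$ as $-\id$). Hence $\tau$ fixes both $u^0$ and $q$, and therefore acts as the identity on the sub VOA $U=\vir(u^0,q)\cong\W(\sfr{4}5)$.

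Next I show $\tau|_{M(1)}=\id$. Since $M(1)$ is generated as a VOA by $V_1=M(1)_1$, it suffices to prove $\tau|_{V_1}=\id$; because $\theta$ negates $V_1$, this amounts to showing $V_1\subseteq V_{u^0}[\sfr{2}5]$, so that $\sigma_{u^0}$ also negates $V_1$. For $h\in\R\tensor_{\Z}\sqrt{2}A_2$, a weight count gives $(\ee^{\pm\sqrt{2}\alpha_i})_{(1)}h_{(-1)}\vac=0$: such a vector would lie in the weight-$1$ component of the $\ee^{\pm\sqrt{2}\alpha_i}$-sector of $V_{\sqrt{2}A_2}$, but that sector has minimum weight $|\sqrt{2}\alpha_i|^2/2=2$. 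Combined with $\omega_{(1)}h_{(-1)}\vac=h_{(-1)}\vac$, the formula \eqref{conformal} for $u^0$ yields $u^0_{(1)}h_{(-1)}\vac=\sfr{2}5\, h_{(-1)}\vac$, and thus $V_1\subseteq V_{u^0}[\sfr{2}5]$, as required.

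Finally, since $\tau$ now fixes $M(1)$ pointwise, the standard description of $\aut(V_L)$ forces $\tau$ to act on the lattice generators by a character $\chi\colon\sqrt{2}A_2\to\C^\times$; that is, $\tau(\ee^{\pm\sqrt{2}\alpha_i})=\chi(\pm\sqrt{2}\alpha_i)\,\ee^{\pm\sqrt{2}\alpha_i}$. But $\tau$ also fixes $u^0-\sfr{2}5\w=\sfr{1}5\sum_{i=0}^{2}(\ee^{\sqrt{2}\alpha_i}+\ee^{-\sqrt{2}\alpha_i})$, and the six vectors $\ee^{\pm\sqrt{2}\alpha_i}$ for $i=0,1,2$ are linearly independent, so $\chi(\pm\sqrt{2}\alpha_i)=1$ for all $i$. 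Since $\sqrt{2}\alpha_1$ and $\sqrt{2}\alpha_2$ generate $\sqrt{2}A_2$, the character $\chi$ is trivial and $\tau=\id$. The main obstacle is the middle paragraph---establishing $V_1\subseteq V_{u^0}[\sfr{2}5]$ via the weight-$1$ vanishing $(\ee^{\pm\sqrt{2}\alpha_i})_{(1)}h_{(-1)}\vac=0$; once this weight-one check is in place, the reduction to $U$ and the final lattice-character argument are routine.
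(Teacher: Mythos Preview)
Your proof is correct and follows essentially the same route as the paper: show that $\tau=\sigma_{u^0}\theta$ acts trivially on $V_1$, deduce that $\tau$ lies in the inner part $N(V_{\sqrt{2}A_2})$ and hence acts on the lattice vectors by a character, and then use $\tau u^0=u^0$ to force that character to be trivial. The only real differences are cosmetic: the paper cites the decomposition in Lemma~4.1 of \cite{KMY} to get $\sigma_{u^0}|_{V_1}=-1$, whereas you verify $V_1\subset V_{u^0}[\sfr{2}{5}]$ by the direct weight count $(\ee^{\pm\sqrt{2}\alpha_i})_{(1)}h_{(-1)}\vac=0$; and the paper uses the commutation of $\theta$ and $\sigma_{u^0}$ to write $\tau=(-1)^{\beta_{(0)}}$, while you work with a general character. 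Your opening paragraph about $q$ and the subalgebra $U\cong\W(\sfr{4}{5})$ is not needed for the argument (only $\tau u^0=u^0$ is used later), but it is harmless.
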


\begin{proof}
It follows from the decomposition in Lemma 4.1 of \cite{KMY} that $\sigma_{u^0}$ acts 
by $-1$ on the weight one subspace of $V_{\sqrt{2}A_2}$.
Then $\theta\sigma_{u^0}$ acts trivially on the weight one subspace and hence
$\theta\sigma_{u^0}$ is a linear automorphism.
Since $\theta$ and $\sigma_{u^0}$ commute, the product $\theta\sigma_{u^0}$ has order two 
and $\theta\sigma_{u^0}=(-1)^{\beta_{(0)}}$ for some $\beta\in (\sqrt{2}A_2)^*$. 
Since $\theta$ and $\sigma_{u^0}$ fixes $u^0$, so does 
$(-1)^{\beta_{(0)}}=\theta\sigma_{u^0}$.
On the other hand, one has 
$$
  (-1)^{\beta_{(0)}} u^0= \dfr{2}{5}\w 
  + \dfr{1}{5}\dsum_{i=0}^2 (-1)^{(\beta,\sqrt{2}\alpha_i)}\l( \ee^{\sqrt{2}\alpha_i} 
  +\ee^{-\sqrt{2}\alpha_i} \r) .
$$
Therefore it is necessary for $(-1)^{\beta_{(0)}}$ to fix $u^0$ that 
$(\beta,\sqrt{2}A_2)\in 2\Z$.
This shows $(-1)^{\beta_{(0)}}=\theta\sigma_{u^0}=1$ and we obtain $\sigma_{u^0}=\theta$.
\end{proof}

Set $e^\pm :=(1+\hat{\au}+\hat{\au}^2)\ee^{\pm \sqrt{2}\alpha_1}\in V_{\sqrt{2}A}$.
Then $e^\pm \in V_{\sqrt{2}A_2}^{\hat{\au}}$.
The Griess algebra of $V_{\sqrt{2}A_2}^{\hat{\au}}$ is 3-dimensional spanned
by the conformal element $\w$ and two 2-primary vectors $e^\pm$.
Its structure is described as follows.
$$
  e^\pm_{(1)}e^\pm = 2e^\mp,~~~~
  e^\pm_{(1)}e^\mp = 6\w,~~~~
  (\w|\w)= 1,~~~~
  (e^+|e^-) = 3,~~~~
  (e^\pm | e^\pm)=0.
$$
Set $\rho:=\zeta^{\sqrt{2}{\alpha_1}_{(0)}}$ with $\zeta=\exp(2\pii/3)$.
Then $\hat{\au}\rho\hat{\au}^{-1}=\zeta^{\sqrt{2}(\au \alpha_1)_{(0)}}
=\zeta^{\sqrt{2}{\alpha_2}_{(0)}}$ defines the same automorphism as $\rho$
on $V_{\sqrt{2}A_2}$ and $\rho$ preserves the fixed point subalgebra 
$V_{\sqrt{2}A_2}^{\hat{\au}}$.
We see $\theta^2=\rho^3=1$ and $\la \theta,\rho\ra \cong \mathrm{S}_3$ in $\aut(V_{\sqrt{2}A_2}^{\hat{\au}})$.

By an explicit computation, it is easy to determine all non-trivial Virasoro vectors
in $V_{\sqrt{2}A_2}^{\hat{\au}}$.

\begin{lem}\label{c45}
There are exactly 6 Virasoro vectors in $V_{\sqrt{2}A_2}^{\hat{\au}}$
other than the conformal vector.
$$
\begin{array}{ll}
  \mbox{Central charge $4/5$ : }
  &  u^0:=\dfr{2}{5}\w+\dfr{1}{5} (e^+ +e^-),~~~u^i=\rho^i u^0,~~~i=1,2;
  \vsb\\
  \mbox{Central charge $6/5$ : }
  & v^0:=\dfr{3}{5}\w-\dfr{1}{5} (e^+ +e^-),~~~v^i=\rho^i v^0,~~~i=1,2.
\end{array}
$$
\end{lem}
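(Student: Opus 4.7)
The plan is to exploit the fact that the Griess algebra $B$ of $V_{\sqrt{2}A_2}^{\hat{\au}}$ is only three-dimensional, with the explicit multiplication table and invariant form just recorded. Every Virasoro vector of the VOA lies in $V_2=B$, so I would parametrize a general candidate as $u=a\om+be^++ce^-$ and translate the two defining conditions $u_{(1)}u=2u$ and $u_{(3)}u=(c_u/2)\vac$ into a polynomial system in $a,b,c$. A preliminary observation is that $(\om|e^\pm)=0$: this inner product equals $L(2)e^\pm$, which vanishes because $e^\pm$ is a linear combination of lattice vectors $\ee^{\pm\sqrt{2}\alpha_i}$, each of which is a primary vector of weight $2$. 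Together with the inner products $(\om|\om)=1$, $(e^+|e^-)=3$, and $(e^\pm|e^\pm)=0$ already listed, this is all that is needed.

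Expanding $u\cdot u$ using the multiplication table and collecting coefficients on each basis vector, the equation $u\cdot u=2u$ becomes
\[
  a^2+6bc=a,\qquad 2ab+c^2=b,\qquad 2ac+b^2=c.
\]
Subtracting the last two equations factors as $(c-b)(2a-b-c-1)=0$. When $b=c$, the second equation gives $b(2a+b-1)=0$: the branch $b=0$ recovers the trivial vector and the conformal vector $\om$ from the first equation; the branch $b=c=1-2a$ substituted into the first equation reduces to $25a^2-25a+6=0$, with roots $a\in\{2/5,\,3/5\}$, producing exactly $u^0$ and $v^0$. When $b\ne c$, combining $b+c=2a-1$ with the second equation yields $b^2+bc+c^2=0$, so $b=\zeta c$ or $b=\zeta^2 c$ with $\zeta=\exp(2\pii/3)$; substituting into the first equation again produces the quadratic $25a^2-25a+6=0$, giving four further solutions. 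The central charge is read off from $(u|u)=a^2+6bc=a$, so $c_u=2a$: the $a=2/5$ solutions have central charge $4/5$ and the $a=3/5$ solutions have central charge $6/5$.

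The final task is to identify the four asymmetric solutions with $\rho^{\pm 1}u^0$ and $\rho^{\pm 1}v^0$, which is immediate from $\rho e^\pm=\zeta^{\pm 1}e^\pm$. The main obstacle I anticipate is not conceptual but purely one of bookkeeping: exhausting the cases of the polynomial system without duplication, and then matching the six non-conformal solutions to the list in the lemma. Distinctness of the six vectors is automatic since their coordinate vectors in the basis $\{\om,e^+,e^-\}$ are pairwise different.
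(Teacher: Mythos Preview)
Your proposal is correct and is precisely the explicit computation the paper has in mind but omits: solving the idempotent-type equations in the three-dimensional Griess algebra with the given multiplication table. The only point worth noting is that the paper states the lemma without proof, so your argument in fact supplies the details the paper leaves to the reader.
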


\begin{rem}
Note that the $c=4/5$ Virasoro vectors $u^0,u^1, u^2$ in
$V_{\sqrt{2}A_2}^{\hat{\au}}$ are all extendable and of $\sigma$-type.
\end{rem}

Set $w^0:=e^+-e^-$.
Then $w^0$ is a highest weight vector for $\vir(u^0)\tensor \vir(v^0)
\cong L(\sfr{4}{5},0)\tensor L(\sfr{6}{5},0)$ with highest weight
$(2/5,8/5)$.
One can verify that
$$
  w^0_{(1)}w^0=-6u^0-16v^0 \in \vir(u^0)\tensor \vir(v^0).
$$

\begin{lem}\label{a2group}
  $\aut(V_{\sqrt{2}A_2}^{\hat{\au}})=\la \theta,\rho\ra\cong \mathrm{S}_3$.
\end{lem}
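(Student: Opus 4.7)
The plan is to define a homomorphism
\[
  \phi\colon \aut(V_{\sqrt{2}A_2}^{\hat{\au}})\longto
  \mathrm{Sym}\{u^0,u^1,u^2\}\cong \mathrm{S}_3
\]
given by the permutation action on the three simple $c=4/5$ Virasoro vectors (which, by Lemma~\ref{c45}, is the complete list of such vectors and is therefore stable under any VOA automorphism), and then to show that $\phi$ is surjective with image already realized inside $\la \theta,\rho\ra$, and that $\ker\phi$ is trivial. Since the inclusion $\la\theta,\rho\ra\subseteq \aut(V_{\sqrt{2}A_2}^{\hat{\au}})$ has already been noted, this will simultaneously establish the group equality and the identification $\la\theta,\rho\ra\cong \mathrm{S}_3$.

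For surjectivity I would first check $\rho(e^\pm)=\zeta^{\pm 1}e^\pm$ using $(\sqrt{2}\alpha_1,\sqrt{2}\alpha_j)\equiv \pm 2\pmod{6}$ for $j=0,1,2$; combined with $\theta(e^\pm)=e^\mp$ and the explicit formula \eqref{conformal}, this gives $\rho(u^0)=u^1$ (by the very definition of the $u^i$), $\theta(u^0)=u^0$ (since $e^++e^-$ is $\theta$-fixed), and then $\theta\rho\theta^{-1}=\rho^{-1}$ forces $\theta(u^1)=u^2$. Thus $\phi$ sends $\rho$ and $\theta$ to a $3$-cycle and a transposition, which generate $\mathrm{S}_3$; a comparison of orders shows the restriction $\phi|_{\la \theta,\rho\ra}$ is an isomorphism onto $\mathrm{S}_3$, giving both $\la\theta,\rho\ra\cong \mathrm{S}_3$ and the surjectivity of $\phi$.

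For the triviality of $\ker\phi$, suppose $g\in\aut(V_{\sqrt{2}A_2}^{\hat{\au}})$ fixes $u^0,u^1,u^2$. Summing the three explicit formulas and using $1+\zeta+\zeta^{-1}=0$ gives $u^0+u^1+u^2=\sfr{6}{5}\w$, and a short determinant computation in the basis $\{\w,e^+,e^-\}$ shows that $\{u^0,u^1,u^2\}$ is itself a basis of the three-dimensional Griess algebra $V_2$. Hence $g|_{V_2}=\id$; in particular $g$ fixes $v^0=\w-u^0$ and the weight-two primary vector $w^0=e^+-e^-$. Since $g$ commutes with the action of $\vir(u^0)\tensor \vir(v^0)\cong L(\sfr{4}{5},0)\tensor L(\sfr{6}{5},0)$ on $V_{\sqrt{2}A_2}^{\hat{\au}}$, Schur's lemma forces $g$ to act as a scalar on each isotypic component of the resulting decomposition. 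The vacuum summand contains $\w$ and the summand of highest weight $(\sfr{2}{5},\sfr{8}{5})$ contains the nonzero primary $w^0$, so the scalars on these two summands are both $1$; invoking the description of $V_{\sqrt{2}A_2}^{\hat{\au}}$ from \cite{KMY}, every remaining isotypic component is generated under vertex operator products from $w^0$ together with elements of the vacuum sector, so all other scalars are also $1$, yielding $g=\id$.

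The hard part will be this final structural step: one needs a sufficiently explicit $\vir(u^0)\tensor\vir(v^0)$-decomposition of $V_{\sqrt{2}A_2}^{\hat{\au}}$ to verify that every nonvacuum isotypic component is generated from $w^0$ via OPE with the vacuum sector, so that $g(w^0)=w^0$ and $g|_{V_2}=\id$ propagate to $g=\id$ throughout. The surjectivity portion, by contrast, is routine calculation with third roots of unity.
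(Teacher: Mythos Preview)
Your reduction to the kernel—using the permutation action on $\{u^0,u^1,u^2\}$, the surjectivity via the explicit $\rho,\theta$-action, and the observation that any $g\in\ker\phi$ must fix the whole three-dimensional Griess algebra—matches the paper exactly. The divergence is in the final step, and there the paper takes a different and sharper route than the one you outline.

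The paper does \emph{not} try to show that every $\vir(u^0)\otimes\vir(v^0)$-isotypic component is generated from $w^0$ and the vacuum sector. Instead it invokes a generating set from \cite{TY}: $V_{\sqrt{2}A_2}^{\hat\au}$ is generated by $u^0,v^0,w^0$ together with the (up to scalar unique) weight-$3$ primaries $J$ and $J'$ of highest weights $(3,0)$ and $(0,3)$. Since $g$ fixes the Griess algebra it must scale $J$ and $J'$; the key trick is that $w^0$ is a common eigenvector for the zero-modes $o(J)$ and $o(J')$, so $g(w^0)=w^0$ forces those scalars to be $1$. One then concludes $g=1$ from the generation statement.

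Your proposed route has a genuine gap precisely at the point you flag as ``the hard part''. The fusion rule $L(\sfr{4}{5},\sfr{2}{5})\boxtimes L(\sfr{4}{5},\sfr{2}{5})=L(\sfr{4}{5},0)\oplus L(\sfr{4}{5},\sfr{7}{5})$ shows that products $w^0_{(n)}w^0$ never land in the $(3,\,\cdot)$-sector; reaching the $(3,0)$-component containing $J$ would require at least a triple product passing through a nonzero element of some $(\sfr{7}{5},\,\cdot)$-component, and you give no argument that these intermediate pieces are nonzero. In other words, the assertion that all remaining isotypic components are generated from $w^0$ over the vacuum sector is exactly the nontrivial content you would have to supply, and it is not available in \cite{KMY}. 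The paper sidesteps this entirely via the eigenvector relation between $w^0$ and $J,J'$, which turns the problem of \emph{producing} $J$ from $w^0$ into the much softer problem of \emph{detecting} $J$ through its action on $w^0$.
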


\begin{proof}
Let $g\in \aut(V_{\sqrt{2}A_2}^{\hat{\au}})$.
Then $g$ acts on the 3-set $\{ u^0, u^1,u^2\}$ as a permutation.
Since the subgroup $\la \theta, \rho\ra$ acts on this 3-set as $\mathrm{S}_3$,
by replacing $g$ by $g\rho^i$ if necessary, we may assume that
$gu^i=u^i$ for $i=0,1,2$.
Then $g$ acts trivially on the Griess algebra of $V_{\sqrt{2}A_2}^{\hat{\au}}$.
Let $J$ and $J'$ be highest weight vectors for $\vir(u^0)\tensor \vir(v^0)$
with highest weights $(3,0)$ and $(0,3)$, respectively.
Since $J$ and $J'$ are unique highest weight vectors in the weight 3 subspace,
$g$ acts on $J$ and $J'$ by scalars.
It is shown in (3.17) of \cite{TY} that $V_{\sqrt{2}A_2}^{\la \sigma\ra}$ is 
generated by $u^0$, $v^0$, $w^0$, $J$ and $J'$.
Since $w^0$ is a common eigenvector for the zero-modes of $J$ and $J'$,
$g$ acts trivially on $J$ and $J'$ since $g$ fixes $w^0$.
Therefore $g=1$ and $\aut(V_{\sqrt{2}A_2}^{\sigma})=\la \theta,\rho\ra$.
\end{proof}

Since $\sigma_{u^0}=\theta$ inverts $\rho$, we have 
$\sigma_{u^i}=\sigma_{\rho^iu^0}=\rho^i\sigma_{u^0}\rho^{-i}
=\sigma_{u^0}\rho^{-2i}=\sigma_{u^0}\rho^i$.
In particular, $\rho =\sigma_{u^0}\sigma_{u^1}$.
So we see:

\begin{cor}
  $\aut(V_{\sqrt{2}A_2}^{\hat{\au}})=\la \sigma_{u^0},\sigma_{u^1}\ra$.
\end{cor}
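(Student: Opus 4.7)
The plan is to leverage Lemma~\ref{a2group} together with the identifications made in the paragraph immediately preceding the corollary. By Lemma~\ref{a2group} we already know $\aut(V_{\sqrt{2}A_2}^{\hat{\au}})=\la \theta,\rho\ra$, so to prove the corollary it suffices to exhibit both generators $\theta$ and $\rho$ inside the subgroup $\la \sigma_{u^0},\sigma_{u^1}\ra$. Of course the reverse containment is automatic since each $\sigma_{u^i}$ belongs to $\aut(V_{\sqrt{2}A_2}^{\hat{\au}})$: indeed $u^0,u^1$ are $\hat{\au}$-invariant simple $c=4/5$ Virasoro vectors of $\sigma$-type that lie in $V_{\sqrt{2}A_2}^{\hat{\au}}$, so Proposition~\ref{prop:2.8} applies and the corresponding $\sigma$-involutions act as automorphisms on the fixed-point subalgebra.

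For the nontrivial direction, the first ingredient is Lemma~\ref{lem:5.4}, which gives $\sigma_{u^0}=\theta$ as an element of $\aut(V_{\sqrt{2}A_2})$; restricting to the $\hat{\au}$-fixed subalgebra (which is preserved because $\theta$ and $\hat{\au}$ commute) one obtains $\theta\in \la \sigma_{u^0},\sigma_{u^1}\ra$. The second ingredient is the relation derived immediately before the corollary, namely $\sigma_{u^i}=\sigma_{u^0}\rho^i$, which in the case $i=1$ rearranges to $\rho = \sigma_{u^0}\sigma_{u^1}$. Hence $\rho$ also lies in $\la \sigma_{u^0},\sigma_{u^1}\ra$, giving the containment $\la \theta,\rho\ra \subseteq \la \sigma_{u^0},\sigma_{u^1}\ra$.

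Combining these two inclusions with Lemma~\ref{a2group} yields the claimed equality. There is essentially no obstacle here: the corollary is a direct packaging of Lemma~\ref{a2group} and Lemma~\ref{lem:5.4} together with the conjugation identity $\sigma_{\rho^i u^0}=\rho^i\sigma_{u^0}\rho^{-i}$ applied to $\rho$ (of order three) and $\theta$ (which inverts $\rho$). If anything, the only point worth double-checking is that the $\sigma$-type property of $u^0$ in $V_{\sqrt{2}A_2}$ transfers to $V_{\sqrt{2}A_2}^{\hat{\au}}$, but this is immediate since the fixed-point subalgebra is a submodule over $\vir(u^0)$ and the set of allowed highest weights can only shrink.
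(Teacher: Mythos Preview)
Your proposal is correct and follows exactly the paper's approach: the paper derives $\sigma_{u^i}=\sigma_{u^0}\rho^i$ (hence $\rho=\sigma_{u^0}\sigma_{u^1}$) from $\sigma_{u^0}=\theta$ and the conjugation identity, and then invokes Lemma~\ref{a2group}, which is precisely what you do. Your additional remarks on the reverse inclusion and on the $\sigma$-type property restricting to $V_{\sqrt{2}A_2}^{\hat{\au}}$ are fine sanity checks but not needed for the argument.
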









\subsection{Lattice VOAs and their automorphism group}
Next we recall few facts about lattice VOAs and their automorphism groups.

\begin{df}
Let $L$ be an integral lattice.
The \emph{norm} (or {\it square norm}) of a vector $v$ is defined to be the value $(v,v)$.
For any $n\in \Z$, we define
\[
  L(n)=\{ \al\in L\mid (\al, \al) =n\}
\]
to be the set of all norm $n$ vectors in $L$.
\end{df}

\begin{df}\label{tX}
Let $X$ be a subset of a Euclidean space.
Define $t_X$ to be the orthogonal transformation which is $-1$ on $X$
and is $1$ on $X^\perp$.
\end{df}

\begin{df}[cf. \cite{GL}]\label{rssd}
A sublattice $M$ of an integral lattice $L$ is {\it RSSD (relatively semiselfdual)}
if and only if $2L\le M + \ann_L(M)$, where
$\ann_L(M):=\{ \alpha \in L \mid (\alpha, M)=0\}$.
This implies that $t_M$ maps $L$ to $L$ and is equivalent to this property
when $M$ is a direct summand.
The property that $2 M^* \le M$ is called {\it SSD (semiselfdual)}.
It implies the RSSD property, but the RSSD property is often more useful.
\end{df}

Next we recall the description of the automorphism groups of lattice VOA
from \cite{DN}.

\begin{nota}
Let $L$ be a positive definite even lattice.
Let $O(L)$ be the isometry group of $L$ and $O(\hat{L})$ the automorphism group of
$\hat{L}$.
Then by Proposition 5.4.1 of \cite{FLM} we have an exact sequence
\[
  1 \longto \hom (L,\Z/2\Z) \to  O(\hat{L})\to  O(L) \to  1.
\]
It is known that $O(\hat{L})$ is a subgroup of $\aut(V_L)$ (cf.~loc.~cit.).
Let 
$$
  N(V_L) = \l\la \exp(a_{(0)}) \mid a\in (V_L)_1 \r\ra
$$ 
be the subgroup generated by the linear automorphisms.
Since 
$$
  \sigma \exp(a_{(0)}) \sigma^{-1} = \exp((\sigma a)_{(0)})
$$ 
for any $\sigma\in \aut (V_L)$, $N(V_L)$ is a normal subgroup of $\aut(V_L)$.
\end{nota}

\begin{thm}[\cite{DN}]\label{aut}
Let $L$ be a positive definite even lattice.
Then
\[
  \aut (V_L) = N(V_L)\,O(\hat{L})
\]
Moreover, the intersection $N(V_L)\cap O(\hat{L})$ contains a subgroup
$\hom (L,\Z/2\Z)$ and the quotient $\aut (V_L)/N(V_L)$ is isomorphic
to a subgroup of $O(L)$.
\end{thm}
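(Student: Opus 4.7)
The plan is to decompose any $g\in \aut(V_L)$ as a product of an inner automorphism in $N(V_L)$ and an element of $O(\hat{L})$, by successively normalizing its action on increasingly large distinguished subspaces.

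First, I would analyze the weight-one space, which carries the Lie bracket $[a,b]=a_{(0)}b$ and decomposes as
\[
  (V_L)_1 = H \oplus \bigoplus_{\al\in L(2)} \C \ee^\al,
\]
where $H$ is the image of $\C\tensor L$ in $M(1)_1$. This is a reductive Lie algebra with Cartan subalgebra $H$ and root system $L(2)$. By construction $N(V_L)$ surjects onto the inner automorphism group of this Lie algebra. Any $g\in \aut(V_L)$ restricts to a Lie algebra automorphism of $(V_L)_1$, hence sends $H$ to another Cartan subalgebra. By conjugacy of Cartan subalgebras under inner automorphisms, there exists $\phi\in N(V_L)$ with $\phi g(H)=H$; replacing $g$ by $\phi g$, I may assume $g(H)=H$. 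Invariance of the bilinear form on $V_L$ then forces $g|_H$ to act as an isometry preserving the root set $L(2)$.

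Second, I would upgrade this to an isometry of the whole lattice. For each $\al\in L$ the space $\C \ee^\al\subset V_L$ is characterized as the $H$-weight space of weight $\al$ (and the $L(0)$-weight space of weight $(\al,\al)/2$), so $g$ must permute the family $\{\C \ee^\al:\al\in L\}$ and the induced permutation is an isometry $\bar g\in O(L)$. Choose any lift $\hat g\in O(\hat L)$ of $\bar g$, which exists by the exact sequence
\[
  1\longto \hom(L,\Z/2\Z)\longto O(\hat L)\longto O(L)\longto 1
\]
of Proposition 5.4.1 of \cite{FLM}. After replacing $g$ by $\hat g^{-1}g$, I may assume $g$ acts trivially on $H$. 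Then $g(\ee^\al)=\chi(\al)\ee^\al$ for scalars $\chi(\al)\in \C^\times$, and the OPE $\ee^\al\cd \ee^\be\in \C^\times \cd \ee^{\al+\be}$ forces $\chi:L\to \C^\times$ to be a group homomorphism. Pick $\be\in H$ with $\chi(\al)=e^{2\pii(\al,\be)}$ for all $\al\in L$; then $g=\exp(2\pii\be_{(0)})\in N(V_L)$. Since $N(V_L)$ is normal in $\aut(V_L)$, this shows $\aut(V_L)=N(V_L)\,O(\hat L)$.

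For the second statement, each $\chi\in \hom(L,\Z/2\Z)$ gives an automorphism acting on $\ee^\al$ by $(-1)^{\chi(\al)}$, which clearly lies in $O(\hat L)$ and equals $\exp(\pii \be_{(0)})$ for a suitable $\be\in \tfr{1}{2}L^*$, hence lies in $N(V_L)$ as well. For the quotient, Step 2 associates to each $g\in \aut(V_L)$ an isometry $\bar g\in O(L)$, and the argument in Step 2 shows that the inner automorphisms constructed coming from $N(V_L)$ are exactly those with $\bar g=1$; thus one obtains an injective homomorphism $\aut(V_L)/N(V_L)\hookrightarrow O(L)$.

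The main obstacle is the reduction of Step 2 for a general even lattice $L$ (not necessarily generated by $L(2)$): one needs to know that the action of $g$ on $H$ is forced by the VOA structure to be induced from a genuine lattice isometry of $L$ and, moreover, that this isometry admits a lift to $O(\hat L)$. The first point is handled by identifying $\C \ee^\al$ intrinsically as an $H$-weight space inside $V_L$; the second is precisely the content of the FLM extension sequence above.
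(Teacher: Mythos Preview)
The paper does not give its own proof of this theorem; it is quoted directly from \cite{DN} and used as a black box. So there is nothing in the paper to compare your argument against.

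Your outline for the factorization $\aut(V_L)=N(V_L)\,O(\hat L)$ is the standard one and is essentially what appears in \cite{DN}: reduce to $g(H)=H$ by conjugacy of Cartan subalgebras in the reductive Lie algebra $(V_L)_1$, read off an isometry of $L$ from the permutation of $H$-weight spaces, peel it off via a lift in $O(\hat L)$, and identify what remains as an exponential $\exp(2\pii\beta_{(0)})$. The containment $\hom(L,\Z/2\Z)\subset N(V_L)\cap O(\hat L)$ is also handled correctly.

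There is one genuine gap in your final paragraph. The assignment $g\mapsto \bar g\in O(L)$ is \emph{not} well-defined on $\aut(V_L)$: it depends on the choice of $\phi\in N(V_L)$ with $\phi g(H)=H$, and two such choices differ by an element of $N(V_L)$ preserving $H$, which acts on $H$ as an element of the Weyl group $W$ of the root system $L(2)$. Consequently, an element $g\in N(V_L)$ need not satisfy $\bar g=1$; it only satisfies $\bar g\in W$. What your argument actually yields is a well-defined isomorphism $\aut(V_L)/N(V_L)\cong O(L)/W$, a \emph{quotient} of $O(L)$ rather than a subgroup. (This is in fact the precise form of the result in \cite{DN}; the phrase ``subgroup of $O(L)$'' in the statement above is a slight looseness, harmless for the applications in this paper since Remark~\ref{L2=0} immediately specializes to the case $L(2)=\varnothing$, where $W=1$.) You should either restate the conclusion as $\aut(V_L)/N(V_L)\cong O(L)/W$, or, if you want to keep the wording, explain why in your intended setting $W$ admits a complement in $O(L)$.
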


\begin{rem}\label{L2=0}
  If $L(2)=\varnothing$, then $(V_L)_1=\Span\{ \al_{(-1)}\vac \mid \al\in L\}$.
  In this case, the normal subgroup
  $N(V_L)= \{\exp(\lambda \al_{(0)}) \mid \al\in L,~\lambda \in \C\}$ is abelian and
  we have $N(V_L) \cap O(\hat{L})= \hom (L,\Z/2\Z)$ and $\aut (V_L)/N(V_L) \cong O(L)$.
  In particular, we have an exact sequence
  \begin{equation}\label{eq:5.3}
    1\longto N(V_L) \longto \aut(V_L) \stackrel{~\varphi~}\longto O(L)\longto 1.
  \end{equation}
  Note also that $\exp(\lambda \al_{(0)})$ acts trivially on $M(1)$ and
  $\exp(\lambda \al_{(0)})\ee^\be = \exp(\lambda (\al, \be)) \ee^{\be}$ for any
  $\lambda \in \C$ and $\al, \be \in L$.
\end{rem}

\begin{thm}\label{centralizer}
Let $L$ be an even positive definite lattice with $L(2)=\varnothing$.
Let $\au$ be a fixed point free isometry of $L$ of prime order $p$ and
$\hat{\au}$ a lift of $\au$ in $O(\hat{L})$.
Then we have an exact sequence
\[
  1\longto \hom(L/(1-\au)L, \Z_p)
  \longto \cent_{\aut(V_L)}(\hat{\au})
  \stackrel{\varphi}\longto \cent_{O(L)}(\au) \longto 1.
\]
\end{thm}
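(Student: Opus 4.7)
The plan is to intersect the exact sequence \eqref{eq:5.3} of Remark~\ref{L2=0} with the centralizer of $\hat\au$. Since $\varphi$ is a homomorphism and $\varphi(\hat\au)=\au$, restriction of $\varphi$ gives a map $\cent_{\aut(V_L)}(\hat\au)\to \cent_{O(L)}(\au)$ whose kernel is $N(V_L)\cap \cent_{\aut(V_L)}(\hat\au)$. So the proof reduces to two tasks: (i) identify this intersection with $\hom(L/(1-\au)L,\Z_p)$, and (ii) show surjectivity onto $\cent_{O(L)}(\au)$.

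For (i), I would identify $N(V_L)$ with the character group $\hom(L,\C^*)$ via $\exp(\lambda\al_{(0)})\mapsto(\be\mapsto \exp(\lambda(\al,\be)))$, as suggested by Remark~\ref{L2=0}. A short computation using $\hat\au^{\pm 1}\ee^\be=\pm\ee^{\au^{\pm 1}\be}$ shows that conjugation by $\hat\au$ sends a character $\chi$ to $\chi\circ\au^{-1}$; hence $\chi$ commutes with $\hat\au$ iff $\chi$ vanishes on $(1-\au)L$. Fixed-point-freeness of the order-$p$ isometry $\au$ forces its minimal polynomial on $L\otimes\Q$ to be the $p$-th cyclotomic polynomial, so $(1+\au+\cdots+\au^{p-1})L=0$; working modulo $(1-\au)L$ each $\au^i$ acts trivially, and this relation collapses to $pL\subseteq(1-\au)L$. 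Hence $L/(1-\au)L$ is elementary abelian of exponent $p$, and its characters into $\C^*$ all land in the $p$-th roots of unity, giving $\hom(L/(1-\au)L,\C^*)\cong \hom(L/(1-\au)L,\Z_p)$ as required.

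For (ii), I would take a lift $\tilde\sigma\in O(\hat L)\subseteq \aut(V_L)$ of a given $\sigma\in \cent_{O(L)}(\au)$. Then $\tilde\sigma\hat\au\tilde\sigma^{-1}$ is another lift of $\au$, hence equals $\hat\au c$ for some $c\in \hom(L,\{\pm 1\})$. I then look for $n\in N(V_L)$ so that $n\tilde\sigma$ centralizes $\hat\au$; translating the commutation relation $n\tilde\sigma\cdot\hat\au=\hat\au\cdot n\tilde\sigma$ through the character identification reduces to the cocycle equation
\[
\chi\bigl((1-\au)\be\bigr)=c(\be) \qquad \text{for all } \be\in L.
\]
Since $1-\au$ is injective on $L$, this formula unambiguously defines $\chi$ as a homomorphism on $(1-\au)L$; divisibility of $\C^*$ then yields an extension of $\chi$ to all of $L$, and the resulting $n$ solves the problem. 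The main obstacle is the translation from the group-theoretic commutation identity to the cocycle equation above; once that is in place, both the well-definedness on $(1-\au)L$ and the extension to $L$ follow formally from the fixed-point-freeness of $\au$ and the divisibility of $\C^*$.
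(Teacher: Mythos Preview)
Your argument is correct and, for the kernel computation, essentially the same as the paper's: the paper writes the centralizing condition as $h\in 2\pi\sqrt{-1}\,((1-\au)L)^*$ and then translates via $((1-\au)L)^*\subset \tfrac{1}{p}L^*$ into a map $f_\gamma:\alpha\mapsto(\gamma,\alpha)\bmod p$, which is exactly your character description $\hom(L/(1-\au)L,\C^*)\cong\hom(L/(1-\au)L,\Z_p)$ in dual-lattice language.

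The one genuine difference is in surjectivity. The paper simply asserts that $\varphi(\cent_{O(\hat L)}(\hat\au))=\cent_{O(L)}(\au)$, i.e.\ that every $\sigma\in\cent_{O(L)}(\au)$ already has a lift in $O(\hat L)$ commuting with $\hat\au$; this is true (for odd $p$ it follows, for instance, from a Schur--Zassenhaus argument in the extension $1\to\hom(L,\Z/2\Z)\to O(\hat L)\to O(L)\to 1$, since $\hat\au$ has order $p$ and the kernel is a $2$-group), but the paper does not spell it out. You instead allow yourself the larger group $N(V_L)\cong\hom(L,\C^*)$ and solve the cocycle equation $\chi((1-\au)\beta)=c(\beta)$ by invoking injectivity of $1-\au$ and divisibility of $\C^*$. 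Your route is more explicit and avoids the unjustified ``clearly''; the paper's route has the minor advantage of producing a lift that actually lies in $O(\hat L)$, which is occasionally useful downstream.
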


\begin{proof}
Consider the exact sequence in \eqref{eq:5.3}.
By Theorem \ref{aut} and Remark \ref{L2=0}, we have
$\aut(V_L)= N(V_L)\, O(\hat{L})$ and $\Ker\varphi =N(V_L)$.
By considering $O(\hat{L})$ as a subgroup of $\aut(V_L)$, we have
$\cent_{O(\hat{L})}(\hat{\au}) <  \cent_{\aut(V_L)}(\hat{\au})$ and clearly
$\varphi(\cent_{O(\hat{L})}(\hat{\au}))= \cent_{O(L)}(\au)$.
So it suffices to show that
$\Ker \varphi|_{\cent_{\aut(V_L)}(\hat{\au})} \cong \hom(L/(1-\au)L, \Z_p)$.

Suppose $\exp (h_{(0)}) \in \cent_{\aut(V_L)}(\hat{\au})$ for some $h\in \C L$.
By the conjugation relation $\hat{\au}\exp (h_{(0)})\hat{\au}^{-1}=\exp ((\hat{\au} h)_{(0)})$, 
the map $\exp(h_{(0)})$ centralizes $\hat{\au}$ if and only if
$(h, \be -\au\be )\in 2\pi\sqrt{-1}\,\Z$ for any $\be\in L$, which is
equivalent to that $h\in 2\pi\sqrt{-1} ((1-\au)L)^*$.
Since $\au$ is fixed point free of order $p$, we have $L> (1-\au)L > pL$
(see \cite{GL}).
Thus, $((1-\au)L)^* < \frac{1}p L^*$.
Therefore, $h=2\pii \gamma/p$ for some $\gamma \in L^*$ with
$(\gamma, (1-\au)L) \in p\Z$.
The map
\[
\begin{array}{lll}
  f_\gamma:& L  & \longto  \Z/p\Z
  \\
  & \al & \longmapsto  (\gamma,\alpha) \mod p
\end{array}
\]
defines a homomorphism in $\hom(L/(1-\au)L, \Z/p\Z)$.
Hence, we have obtained an isomorphism
$\exp(2\pii \gamma_{(0)}/p)\longmapsto f_\gamma$ between
$\Ker \varphi|_{\cent_{\aut(V_L)}(\hat{\au})}$ and $\hom(L/(1-\au)L, \Z_p)$.
\end{proof}

\begin{rem}\label{rhoa}
Let $\au$ be the fixed point free automorphism of $A_2$ in \eqref{eq:5.2}.
Then we have $(1-\au)A_2= 3A_2^*$.
Consider $L=\sqrt{2}A_2$ and the induced action of $\au$.
For any $\alpha \in \sqrt{2}A_2$, we have $(\al , (1-\au)\sqrt{2}A_2) \in 6\Z$.
Now let $\al$ be a norm 4 vector in $\sqrt{2}A_2$ and $\zeta =\exp(2\pii/3)$.
Define $\rho_\alpha:=\zeta^{\sqrt{2}\alpha_{(0)}}$.
Then $\rho_\al$ is an automorphism of $V_{\sqrt{2}A_2}^{\hat{\au}}$.
Set $e^\pm =(1+\hat{\au}+\hat{\au}^2)e^{\pm\alpha}$.
Then $\rho_{\al} (e^\pm) = \zeta^{\pm 1} e^\pm$.
Therefore, $\rho_\al$ agrees with the automorphism $\rho$ defined
in Section \ref{sec:3}.
\end{rem}

\begin{lem}\label{phise}
  Let $L$ be an even positive definite lattice with $L(2)=\varnothing$.
  Let $\au$ be a fixed point free isometry of $L$ of order $3$.
  Then for any $\al\in L(4)$, the sublattice
  $$
    A(\al)=\Span\{\al, \au(\al)\} \cong \sqrt{2}A_2.
  $$
  If $A(\al)$ is an RSSD sublattice of $L$, i.e., $2L \leq A(\al) + \ann_L(A(\al))$,
  then any $c=4/5$ Virasoro vector $u\in V_{A(\al)}^{\hat{\au}}$ is extendable and 
  of $\sigma$-type in $V_L^{\hat\au}$.
  Moreover, $\varphi(\sigma_u) = t_{A(\al)}$ where $\varphi$ is as in \eqref{eq:5.3}.
\end{lem}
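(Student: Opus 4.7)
My plan is to establish the lemma in three stages: the isomorphism $A(\al)\cong\sqrt{2}A_2$, extendability combined with the $\sigma$-type property, and the formula $\varphi(\sigma_u)=t_{A(\al)}$. For the lattice isomorphism, since $\au$ is fixed-point-free of order $3$ the vector $\al+\au\al+\au^2\al$ is $\au$-invariant and hence vanishes; pairing this relation with $\al$ and using that $\au$ is an isometry yields $(\al,\au\al)=-2$, so the Gram matrix of $A(\al)=\Span_\Z\{\al,\au\al\}$ in this basis is $\begin{pmatrix}4 & -2\\-2 & 4\end{pmatrix}$, which is precisely the Gram matrix of $\sqrt{2}A_2$. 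Extendability of $u$ in $V_L^{\hat{\au}}$ is then immediate from Lemma~\ref{c45}: any $c=4/5$ Virasoro vector in $V_{A(\al)}^{\hat{\au}}$ is one of $u^0,u^1,u^2$, each extendable there with $3$-primary vector $J$ (resp.\ $\rho^i J$) lying already in $V_{A(\al)}^{\hat{\au}}\subseteq V_L^{\hat{\au}}$.

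For the $\sigma$-type claim, by Lemma~\ref{lem:2.9} it is enough to show that $\xi_U$ acts trivially on $V_L^{\hat{\au}}$, where $U\cong \W(\sfr{4}{5})$ is generated by $u$ and its $3$-primary vector. I would exploit the RSSD hypothesis $2L\le M+K$ (with $M=A(\al)$ and $K=\ann_L(A(\al))$) to decompose
\[
V_L\ =\ \bigoplus_{[\gamma]\in L/(M\oplus K)}V_{M+\gamma_M}\otimes V_{K+\gamma_K}
\]
as a $V_M\otimes V_K$-module, where the $M$-projection of each $\gamma\in L$ satisfies $\gamma_M\in \tfrac{1}{2}M\cap M^{*}=\tfrac{1}{\sqrt{2}}A_2$. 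The triality $\au$ permutes the four cosets of $M$ in $\tfrac{1}{\sqrt{2}}A_2$, fixing only the zero coset and cycling the other three. Lemma~4.1 of \cite{KMY} implies that the summands with $\gamma_M\in M$ decompose as $\vir(u^0)$-modules with highest weights only in $\{0,2/5,7/5,3\}$, so they contribute only $W[0]$- and $W[2/5]$-type $U$-submodules. For the length-$3$ coset orbit, the $\hat{\au}$-invariant part of the three cycled summands is a diagonal copy isomorphic as $\vir(u^0)$-module to $V_{M+\gamma_M}$; a direct module-theoretic analysis (via the triality and the $\vir(u^0)\otimes\vir(v^0)$-decomposition of $V_{\sqrt{2}A_2}$-modules) shows these also contain no $L(\sfr{4}{5},1/15)$ or $L(\sfr{4}{5},2/3)$ submodules, so $\xi_U|_{V_L^{\hat{\au}}}=1$.

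Finally, for the identification $\varphi(\sigma_u)=t_{A(\al)}$, I apply Lemma~\ref{lem:5.4}: on $V_M$, $\sigma_u$ coincides with $\theta$, which negates the weight-one subspace $M\subset (V_L)_1$; on the other hand $u\in V_M$ commutes with $V_K$, so $\sigma_u$ fixes $(V_K)_1=K$ pointwise. Through the exact sequence \eqref{eq:5.3}, $\varphi(\sigma_u)$ is therefore the isometry of $L$ that acts as $-1$ on $A(\al)$ and $+1$ on $\ann_L(A(\al))$, namely $t_{A(\al)}$. The main obstacle is the $\sigma$-type step: ruling out $L(\sfr{4}{5},1/15)$ and $L(\sfr{4}{5},2/3)$ submodules from the diagonal contributions of the length-$3$ coset orbit requires a careful identification of the $\vir(u^0)$-module structure on each $V_{M+\gamma_M}$ with $\gamma_M\in\tfrac{1}{\sqrt{2}}A_2\setminus M$, where the triality of $A_2$ plays an essential role.
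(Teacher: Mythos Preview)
Your argument for $A(\al)\cong\sqrt{2}A_2$, for extendability, and for $\varphi(\sigma_u)=t_{A(\al)}$ matches the paper. The difference, and the gap you yourself flag, is in the $\sigma$-type step.

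You work inside $V_L^{\hat{\au}}$, split into the zero coset and the length-$3$ orbit of nontrivial cosets, and then leave the analysis of the diagonal piece coming from the length-$3$ orbit as an ``obstacle.'' The paper avoids this detour entirely: it proves the stronger statement that $u$ is of $\sigma$-type already in $V_L$ (not merely in the $\hat{\au}$-fixed subalgebra). Since the RSSD condition forces the projection $\pi(L)\subset\tfrac{1}{2}A(\al)$, the $V_{A(\al)}$-modules occurring in $V_L$ are exactly the four coset modules $V_{A(\al)+\gamma}$ with $\gamma\in\tfrac{1}{2}A(\al)/A(\al)$. The paper then cites two known decompositions: Lemma~4.1 of \cite{KMY} for the trivial coset (which you already use) and Lemma~4.1 of \cite{LY} for the three nontrivial cosets. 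Together these give that all $\vir(u)$-highest weights in $V_L$ lie in $\{0,2/5,7/5,3\}$, so $u$ is of $\sigma$-type in $V_L$ and a fortiori in $V_L^{\hat{\au}}$.

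Your diagonal observation is correct, and in fact reduces to exactly the same missing ingredient: the $\vir(u^0)$-structure of $V_{M+\gamma_M}$ for $\gamma_M\in\tfrac{1}{2}M\setminus M$, which is precisely what \cite{LY} supplies. So there is no need to pass to $V_L^{\hat{\au}}$ or invoke $\xi_U$ at all; just cite \cite{LY} alongside \cite{KMY} and conclude $\sigma$-type in $V_L$ directly.
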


\begin{proof}
Let $\alpha$ in $L(4)$.
Since $\au$ is fixed point free, $(1+\au +\au^2)\alpha =0$ and  
$((1+\au+\au^2)\alpha,\alpha)=0$.
As $\au$ is of order three, $\au^2\alpha=\au^{-1}\alpha$ and 
we obtain $(\alpha, \au \alpha)=-2$.
Therefore, $A(\alpha)$ is isomorphic to $\sqrt{2}A_2$.
Let $\pi: L \to A(\alpha)^*$ be the natural map.
Then by the assumption that $A(\alpha)$ is an RSSD in $L$,
we have $\pi(L)\subset \frac{1}{2} A(\alpha)$.
Let $u$ be a $c=4/5$ Virasoro vector of $V_{A(\alpha)}^{\hat{\au}}$.
Then $u$ is extentable by Lemma \ref{c45}.
Thus, by the decomposition of $V_{A(\al)}$-modules as $\vir(u)$-modules
in Lemma 4.1 of \cite{KMY} and Lemma 4.1 of \cite{LY}, 
we know that the highest weight vectors of $u$ in $V_{L}$ have 
weights $0$, $2/5$, $7/5$ or $3$.
Hence, $u$ is of $\sigma$-type in $V_L$.
Moreover, $\varphi(\sigma_u) $ acts as $-1$ on $A(\alpha)$ and $1$ on
$\ann_L(A(\al))$ by Lemma 5.4.  
Thus, $\varphi(\sigma_u) = t_{A(\al)}$ as desired.
\end{proof}


\subsection{Tensor products of $A_2$ and root lattices}

\begin{df}

Let $A$ and $B$ be integral lattices with the inner products $(~\,,~)_A$
and $(~\,,~)_B$, respectively.
\textit{The tensor product of the lattices} $A$ and $B$ is defined to be
the integral lattice which is isomorphic to $A\otimes_\Z B$ as a
$\Z$-module and has the inner product given by
$(\al\otimes \be, \al' \otimes\be') = (\al,\al')_A \cdot (\be,\be')_B$,
for any  $ \al,\al'\in A$,  and $\be,\be'\in B. $
We simply denote the tensor product of the lattices $A$ and $B$ by $A \otimes B$.
\end{df}

Let $R$ be a root lattice. 
Then the tensor product $A_2\otimes R$ is an even lattice, whose minimal norm is $4$.  
The following lemma is proved in Lemma 3.3 of \cite{GL}.

\begin{lem}\label{atb}
Let $R$ be a root lattice. Then the minimal vectors of $A_2\otimes R$ are given by
\[
  \{\al\otimes \be\mid  \al\in A_2,~ \be\in R,~(\alpha,\alpha)=(\beta,\beta)=2\}.
\]
\end{lem}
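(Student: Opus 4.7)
The plan is to write an arbitrary element of $A_2\otimes R$ in the basis coming from a choice of simple roots of $A_2$ and then exploit the Gram matrix of $A_2$ to reduce the norm-$4$ condition to a short case analysis.

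First I would fix simple roots $\{\al_1,\al_2\}$ of $A_2$, so that the Gram matrix with respect to this basis is $\bigl(\begin{smallmatrix}2 & -1 \\ -1 & 2\end{smallmatrix}\bigr)$. Then any $v\in A_2\otimes R$ has a unique expression
\[
  v=\al_1\tensor r_1+\al_2\tensor r_2,\qquad r_1,r_2\in R,
\]
and a direct expansion using the Gram matrix yields the key identity
\[
  (v,v)=2(r_1,r_1)-2(r_1,r_2)+2(r_2,r_2)=(r_1,r_1)+(r_2,r_2)+(r_1-r_2,r_1-r_2).
\]
Rewriting $(v,v)$ as a sum of three norms of vectors of $R$ is the crux of the argument.

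Next I would use that $R$ is an even lattice of minimum norm $2$: each of the three summands on the right is a nonnegative even integer, which vanishes exactly when the corresponding vector $r_1$, $r_2$, or $r_1-r_2$ equals $0$. Imposing $(v,v)=4$, the only way to partition $4$ as a sum of three even nonnegative integers is $2+2+0$; the pattern $4+0+0$ cannot occur, since any two of $r_1$, $r_2$, $r_1-r_2$ vanishing forces the third to vanish as well, giving $v=0$. This leaves exactly three subcases:
\[
  r_1=0,\q r_2=0,\q \text{or}\q r_1=r_2,
\]
in which $v$ equals $\al_2\tensor r_2$, $\al_1\tensor r_1$, or $(\al_1+\al_2)\tensor r_1$ respectively, with the surviving vector of $R$ having norm $2$. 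Since $\{\pm\al_1,\pm\al_2,\pm(\al_1+\al_2)\}$ is precisely the set of norm-$2$ vectors of $A_2$ (and signs can be absorbed into the $R$-factor), every minimal vector of $A_2\tensor R$ is of the claimed form $\al\tensor\be$ with $\al\in A_2(2)$ and $\be\in R(2)$, and the converse is obvious.

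I do not expect a substantial obstacle here; the only point requiring care is verifying the identity for $(v,v)$ and handling the bookkeeping that translates the three subcases back into the uniform ``root $\tensor$ root'' description. Morally, the statement reflects the fact that the $A_2$ quadratic form decomposes as a sum of three squares indexed by the three positive roots, and this is exactly what the displayed identity encodes.
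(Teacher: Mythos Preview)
Your argument is correct and complete: the identity $(v,v)=(r_1,r_1)+(r_2,r_2)+(r_1-r_2,r_1-r_2)$ is exactly right, and the case analysis that follows cleanly pins down the minimal vectors as pure tensors of roots. The paper does not actually prove this lemma---it simply cites Lemma~3.3 of \cite{GL}---so your self-contained argument supplies something the paper omits. The only cosmetic addition you might make is to note explicitly that the same identity shows the minimal norm of $A_2\otimes R$ is $4$ (i.e., that no nonzero $v$ has $(v,v)<4$), which you implicitly use but do not state.
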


\begin{nota}
For each root $\be\in R$, we have 
$A_2\otimes \Z\be \cong A_2\otimes A_1 \cong \sqrt{2}A_2$. 
We will denote $A_2\otimes \Z\be$ by $\AR{\be}$. 
For simplicity, we also denote $A_2\otimes R$ by $\AR{R}$. 
Let $\au$ be the fixed point free isometry of $A_2$ in \eqref{eq:5.2}. 
By abuse of notation, we still denote $\au\tensor 1 \in O(\AR{R})$ by $\au$.
\end{nota}

\begin{lem}\label{AtensorR}
  Let $R$ be a root lattice. 
  Let $\au$ be the fixed point free isometry of $\AR{R}$ of order three induced by \eqref{eq:5.2}. 
  Then
  \\
  (1)~ $(1-\au)\AR{R} \cong 3A_2^*\otimes R$. 
  In particular, $[\AR{R}:(1-\au)\AR{R}]=3^{\rank R}$.
  \\
  (2)~ $\cent_{O(\AR{R})}(\au) \cong O(R)$.
\end{lem}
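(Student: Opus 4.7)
For part (1), my approach is a direct computation using the tensor product structure. Since $\au$ acts on $\AR{R} = A_2 \otimes R$ as $\au \otimes 1$, the endomorphism $1-\au$ factors as $(1-\au)\otimes 1$, so $(1-\au)\AR{R} = ((1-\au)A_2) \otimes R$. By Remark~\ref{rhoa}, $(1-\au)A_2 = 3A_2^*$ inside $A_2$, hence $(1-\au)\AR{R} \cong 3A_2^* \otimes R$ as integral lattices (the inherited inner product matches the tensor inner product). For the index, I would first establish $[A_2 : 3A_2^*] = 3$ via the tower $3A_2^* \subset A_2 \subset A_2^*$ with $[A_2^* : A_2] = 3$ and $[A_2^* : 3A_2^*] = 3^{\rank A_2}=9$, and then invoke multiplicativity of the index under tensoring with $R$ to conclude $[\AR{R} : (1-\au)\AR{R}] = [A_2 : 3A_2^*]^{\rank R} = 3^{\rank R}$.

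For part (2), the easy direction is the embedding $O(R) \hookrightarrow \cent_{O(\AR{R})}(\au)$ sending $g \mapsto 1 \otimes g$, which obviously commutes with $\au \otimes 1$. For the reverse, I would use the classification in Lemma~\ref{atb}: every minimal vector of $\AR{R}$ has the form $\al \otimes \be$ with $\al \in A_2(2)$ and $\be \in R(2)$, and $\au$ acts with orbits of size $3$ of the shape $\{\al_0 \otimes \be,\al_1 \otimes \be,\al_2 \otimes \be\}$ indexed by $\be \in R(2)$. Any centralizer element $g$ permutes these orbits and hence induces a permutation of $R(2)$; the factorization $(\al\otimes\be, \al'\otimes\be') = (\al,\al')(\be,\be')$ shows this permutation preserves inner products, so it extends uniquely to an element of $O(R)$. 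Combined with the fact that $R$ is spanned by its roots, this gives the inverse assignment, so the two sides are identified.

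The main technical obstacle will be pinning down the exact relationship between $\cent_{O(\AR{R})}(\au)$ and $O(R)$: the map constructed above has kernel containing $\la \au \ra$, since any isometry acting as $\au^{i(\be)}$ on every $\au$-orbit must in fact act by a single $\au^i$ globally by $\Z$-linearity, so a careful accounting of this ambiguity is needed to match the statement. A clean way to handle this is to view $\AR{R}$ as a rank-$\rank R$ Hermitian $\Z[\zeta_3]$-lattice (with $\au$ realized as multiplication by $\zeta_3$), identify it with the base change $\Z[\zeta_3] \otimes_\Z R$ via the Hermitian isomorphism $A_2 \cong \Z[\zeta_3]$, and observe that the centralizer is exactly the Hermitian isometry group of this lattice, which factors through $O(R)$.
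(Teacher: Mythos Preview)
Your argument for part (1) is essentially the paper's: both compute $(1-\au)(\alpha\otimes\beta)=((1-\au)\alpha)\otimes\beta$, invoke $(1-\au)A_2=3A_2^*$, and read off the index. Nothing to add there.

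For part (2) the paper takes a different route from yours: rather than analysing $\au$-orbits of minimal vectors directly, it first determines the full isometry group $O(\AR{R})$ as the central product $O(A_2)*O(R)$ (with an extra swap when $R=A_2$) using Lemma~\ref{atb}, and then computes the centraliser of $\au$ inside this known group by computing $\cent_{O(A_2)}(\au)$. Your orbit/Hermitian approach is more intrinsic and avoids having to name $O(\AR{R})$ globally; the paper's approach is quicker once the central-product description is in hand.

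More importantly, the ``technical obstacle'' you flag is real and is not an artefact of your method. The element $\au$ itself lies in $\cent_{O(\AR{R})}(\au)$, and since $O(A_2)\cong D_{12}$ one has $\cent_{O(A_2)}(\au)=\langle -1,\au\rangle\cong\Z_6$, not $\langle\pm 1\rangle$ as the paper asserts. Tracing this through the central product gives $\cent_{O(\AR{R})}(\au)\cong\langle\au\rangle\times O(R)$; for instance when $R=A_1$ one gets $\cent_{O(\sqrt{2}A_2)}(\au)\cong\Z_6$, not $O(A_1)\cong\Z_2$. Your Hermitian reformulation is correct---the centraliser is exactly the unitary group of the $\Z[\zeta_3]$-lattice---but that unitary group still contains the scalar $\zeta_3=\au$, so it does not collapse to $O(R)$ either. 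In short, you cannot prove the statement exactly as written; the intended content is $\cent_{O(\AR{R})}(\au)/\langle\au\rangle\cong O(R)$, and this is what is actually needed downstream (the extra $\langle\au\rangle$ acts trivially on $V_{\AR{R}}^{\hat\au}$). Your write-up should make this adjustment explicit rather than try to argue the $\langle\au\rangle$ away.
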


\begin{proof}
(1):~First we note that $(1-\au) A_2\cong 3A_2^*$. 
Since $(1-\au) (\al\otimes \be)= ( (1-\au) \al) \otimes \be$, we have 
$(1-\au)\AR{R} \cong 3A_2^*\otimes R$ and $[\AR{R}:(1-\au)\AR{R}]=3^{\rank R}$.
\\
(2):~Suppose $R\ne A_2$
Since $\AR{R}= A_2\otimes R$, we have $O(\AR{R})\cong O(A_2)*O(R)$, 
the central product of $O(A_2)$ and $O(R)$, by Lemma \ref{atb}. 
If $R=A_2$, then the flip of the arguments of the tensors is also  an isometry and we have $O(\AR{A_2})\cong (O(A_2)*O(A_2)){:}2$.
In either cases, we have 
$\cent_{O(\AR{R})}(\au) \cong O(R)$ since $\cent_{O(A_2)}(\au) = Z(O(A_2))=\la \pm 1\ra$.
\end{proof}

\begin{prop}\label{prop:5.20}
Let $R$ be a root lattice. 
Then $\cent_{\aut(V_{\AR{R}})}(\au) \cong  3^{\rank R}. O(R)$ and 
we have an exact sequence
\[
  1 \longto \hom(\AR{R}/(1-\au)\AR{R}, \Z/3\Z)
  \longto \cent_{\aut(V_{\AR{R}})}(\hat{\au})
  \overset{\varphi} {\longto} O(R) \longto 1.
\]
\end{prop}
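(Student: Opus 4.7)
The plan is to apply Theorem \ref{centralizer} directly with $L = \AR{R}$ and the induced isometry $\au = \au \otimes 1$. First I would verify the hypotheses of that theorem. By Lemma \ref{atb}, every minimal vector of $\AR{R}$ has norm $4$, so $\AR{R}(2) = \varnothing$. Next, the fixed subspace of the order-$3$ permutation $\au$ on $\Q A_2$ is spanned by $\al_0 + \al_1 + \al_2 = 0$ and hence is trivial, so $\au \otimes 1$ is fixed point free on $\Q A_2 \otimes R$ and, a fortiori, on $\AR{R}$. Since $3$ is prime, Theorem \ref{centralizer} applies and yields the exact sequence
\[
  1 \longto \hom(\AR{R}/(1-\au)\AR{R}, \Z/3\Z) \longto \cent_{\aut(V_{\AR{R}})}(\hat{\au}) \overset{\varphi}{\longto} \cent_{O(\AR{R})}(\au) \longto 1.
\]

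Using Lemma \ref{AtensorR}(2) I would identify $\cent_{O(\AR{R})}(\au) \cong O(R)$, which immediately gives the stated exact sequence. For the extension structure, by Lemma \ref{AtensorR}(1) the index $[\AR{R}:(1-\au)\AR{R}]$ equals $3^{\rank R}$. Because $\au$ is fixed point free of order $3$, the relation $1+\au+\au^2 = 0$ holds on $\AR{R}$, and from it one obtains the identity $(1-\au)(2+\au) = 2 - \au - \au^2 = 3$; therefore $3\AR{R} \subseteq (1-\au)\AR{R}$. Thus the finite abelian group $\AR{R}/(1-\au)\AR{R}$ has exponent dividing $3$ and order $3^{\rank R}$, so it is elementary abelian of rank $\rank R$. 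Its Pontryagin dual is isomorphic to $(\Z/3\Z)^{\rank R}$, yielding $\cent_{\aut(V_{\AR{R}})}(\hat\au) \cong 3^{\rank R}.O(R)$ as claimed.

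The argument is essentially a direct combination of Theorem \ref{centralizer} with parts (1) and (2) of Lemma \ref{AtensorR}, so no genuine obstacle is anticipated. The only subtle point is the elementary verification that $3\AR{R} \subseteq (1-\au)\AR{R}$, which upgrades the abstract isomorphism type of the kernel from a finite abelian group of order $3^{\rank R}$ to an elementary abelian $3$-group and thereby justifies the notation $3^{\rank R}.O(R)$ for the extension.
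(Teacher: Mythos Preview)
Your proposal is correct and follows essentially the same approach as the paper, which simply cites Theorem \ref{centralizer} and Lemma \ref{AtensorR}. You have added the explicit verification of the hypotheses of Theorem \ref{centralizer} and the observation that $(1-\au)(2+\au)=3$ forces the quotient to be elementary abelian, details the paper leaves implicit; but the underlying argument is the same.
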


\begin{proof}
It follows from Theorem \ref{centralizer} and Lemma \ref{AtensorR}.
\end{proof}

\begin{nota}\label{ER}
  Let $R$ be a root lattice. 
  For $\beta\in R(2)$, let $E(\beta)$ be the set of simple extendable $c=4/5$ 
  Virasoro vectors of $V_{\AR{\be}}^{\hat{\au}}$, and set 
  $E(R)= \cup_{\beta\in R(2)} E(\beta)$ in $V_{\AR{R}}$.
\end{nota}

\begin{prop}\label{weylpart}
  Let $R$ and $E(R)$ be as in Notation \ref{ER}. 
  Let $G= \la \sigma_u \mid u\in E(R)\ra$ and let 
  $\varphi: \cent_{\aut(V_{\AR{R}})}(\hat{\au}) \longto O(R)$ 
  be the natural map as in Proposition \ref{prop:5.20}. 
  Then $\varphi(G) \cong \mathrm{Weyl}(R)$, 
  where $\mathrm{Weyl}(R)$ is the Weyl group of $R$.
\end{prop}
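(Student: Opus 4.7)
The strategy is to show that every generator $\sigma_u$ of $G$ (with $u\in E(\beta)$ for some $\beta\in R(2)$) maps under $\varphi$ to the reflection $s_\beta \in \mathrm{Weyl}(R)$, and that conversely every such reflection arises this way. Since $\mathrm{Weyl}(R)$ is generated by the root reflections $\{s_\beta \mid \beta\in R(2)\}$, the equality $\varphi(G)=\mathrm{Weyl}(R)$ then follows.

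First, I would verify the hypotheses of Lemma \ref{phise} for $L=\AR{R}$. Since $R$ is a root lattice, Lemma \ref{atb} gives $\AR{R}(2)=\varnothing$. For any $\beta\in R(2)$ and any minimal vector $\alpha\otimes \beta \in \AR{\beta}(4)$, the basis $\{\alpha,\au\alpha\}$ spans $A_2$ (as $\alpha\in A_2(2)$), so $A(\alpha\otimes\beta)=\AR{\beta}$. To check that $\AR{\beta}$ is RSSD in $\AR{R}$, take any $\alpha'\otimes\gamma \in \AR{R}$ and decompose $\gamma = \tfr{(\gamma,\beta)}{2}\beta + \gamma'$ with $\gamma'\perp\beta$; then $2\gamma' = 2\gamma - (\gamma,\beta)\beta \in \ann_R(\beta)$, whence
\[
  2(\alpha'\otimes\gamma) = (\gamma,\beta)(\alpha'\otimes\beta) + \alpha'\otimes 2\gamma' \in \AR{\beta} + A_2\otimes \ann_R(\beta) = \AR{\beta} + \ann_{\AR{R}}(\AR{\beta}).
\]
Thus Lemma \ref{phise} applies and gives $\varphi(\sigma_u) = t_{\AR{\beta}}$ for every $u\in E(\beta)$.

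Next I would compute the lattice isometry $t_{\AR{\beta}}$ explicitly. Since $\AR{\beta}=A_2\otimes\Z\beta$ and $\ann_{\AR{R}}(\AR{\beta})=A_2\otimes\ann_R(\beta)$, the same decomposition $\gamma = \tfr{(\gamma,\beta)}{2}\beta + \gamma'$ shows that $t_{\AR{\beta}}$ acts on $\alpha'\otimes\gamma$ by negating the $\AR{\beta}$-component and fixing the rest, which yields
\[
  t_{\AR{\beta}}(\alpha'\otimes \gamma) = \alpha'\otimes \bigl(\gamma - (\gamma,\beta)\beta\bigr) = \alpha'\otimes s_\beta(\gamma).
\]
Hence $t_{\AR{\beta}} = 1_{A_2}\otimes s_\beta$ as an element of $O(\AR{R})$. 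Under the isomorphism $\cent_{O(\AR{R})}(\au)\cong O(R)$ of Lemma \ref{AtensorR}(2), this element corresponds precisely to $s_\beta \in \mathrm{Weyl}(R)\subset O(R)$. Therefore $\varphi(\sigma_u)=s_\beta$ whenever $u\in E(\beta)$.

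Finally I would assemble the conclusion. On the one hand, $\varphi(G)$ is generated by the images $\varphi(\sigma_u)=s_\beta$ for $u\in \bigcup_{\beta\in R(2)}E(\beta)$, so $\varphi(G)\subseteq \mathrm{Weyl}(R)$. Since $E(\beta)$ is non-empty for every $\beta\in R(2)$ by Lemma \ref{c45} (each $V_{\AR{\beta}}^{\hat{\au}}$ contains three extendable $c=4/5$ Virasoro vectors of $\sigma$-type), every root reflection $s_\beta$ lies in $\varphi(G)$, and since $\mathrm{Weyl}(R)=\la s_\beta\mid \beta\in R(2)\ra$ we obtain the reverse inclusion, completing the proof. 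The only step requiring care is the explicit identification of $t_{\AR{\beta}}$ with $1_{A_2}\otimes s_\beta$ inside the centralizer structure of Lemma \ref{AtensorR}(2); everything else is formal once Lemma \ref{phise} is invoked.
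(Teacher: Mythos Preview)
Your proposal is correct and follows essentially the same approach as the paper: verify the RSSD condition for $\AR{\beta}\subset\AR{R}$, apply Lemma~\ref{phise} to obtain $\varphi(\sigma_u)=t_{\AR{\beta}}$, and then identify $t_{\AR{\beta}}$ with $\id_{A_2}\otimes s_\beta$ so that $\varphi(G)=\langle s_\beta\mid\beta\in R(2)\rangle=\mathrm{Weyl}(R)$. The only cosmetic difference is that the paper verifies $t_{\AR{\beta}}=\id_{A_2}\otimes r_\beta$ by a short case analysis on $(\beta,\gamma)\in\{0,\pm 1,\pm 2\}$, whereas you use the uniform orthogonal decomposition $\gamma=\tfrac{(\gamma,\beta)}{2}\beta+\gamma'$; both arguments are equivalent and your version is arguably a bit cleaner.
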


\begin{proof}
Let $\be\in R(2)$ and take a simple $c=4/5$ Virasoro vector $u\in V_{\AR{\be}}^{\hat{\nu}}$. 
Since $\Z\be \cong A_1$, we have $(\Z\beta)^* =\frac{1}2 \Z\be$ and 
$\pi_\be (\AR{R}) \subset A_2\otimes \frac{1}2 \Z\be = \frac{1}2 \AR{\be}$, 
where $\pi_\be : \AR{R} \to \AR{\be}^*$ is the natural projection map. 
Hence, $\AR{\be}$ is an RSSD in $\AR{R}$. 
By Lemma \ref{phise}, $u$ is of $\sigma$-type in $V_{\AR{R}}^{\hat{\nu}}$ and
$\varphi(\sigma_u)= t_{\AR{\be}}$.
Let $r_\be$ be the reflection associated to the root $\be$ on $R$, i.e., 
\[
  r_\be \gamma = \gamma -(\beta, \gamma)\beta\quad \text{ for } \gamma\in R.
\]
We will prove that $t_{\AR{\beta}}=\id_{A_2}\tensor r_\beta$ on $\AR{R}=A_2\tensor R$.
Let $\alpha\in A_2$ be arbitrary and $\gamma\in R$ a root.
Then we have $(\beta,\gamma)=\pm 2$, $\pm 1$ or $0$.
Since $t_{\AR{\beta}}$ and $r_\beta$ are linear maps, replacing $\gamma$ by $-\gamma$ 
if necessary,  we may assume that $(\beta,\gamma)\geq 0$.

If $(\beta,\gamma)=2$, then $\gamma=\beta$ and 
$t_{\AR{\beta}} (\alpha\tensor \gamma) = (- \alpha)\tensor \gamma = \alpha\tensor (-\gamma) 
= \alpha \otimes r_\be \gamma$.
If $(\beta,\gamma)=0$, then $\alpha\tensor \gamma$ is in $\ann_{\AR{R}}(\AR{\beta})$ 
and $t_{\AR{\beta}} (\alpha\tensor \gamma) = \alpha\tensor \gamma = \alpha\tensor r_\beta\gamma$, also.

Now suppose $(\beta,\gamma)=1$. 
Then we have an orthogonal decomposition 
$$
  \alpha \tensor \gamma 
  = \dfr{1}{2}\alpha \tensor \beta
  + \dfr{1}{2}\alpha\tensor (2\gamma-\beta)
$$
with $\alpha\tensor \beta \in \AR{\beta}$ and  
$\alpha\tensor (2\gamma-\beta)\in \ann_{\AR{R}}(\AR{\beta})$.
From this we have 
$$
  t_{\AR{\beta}} (\alpha\tensor \gamma)
  = (-1)\cd \l(\dfr{1}{2}\alpha \tensor \beta\r) 
  + \dfr{1}{2}\alpha\tensor (2\gamma-\beta)
  = \alpha\tensor (\gamma -\beta )
  = \alpha \tensor r_\beta \gamma. 
$$
Since $\AR{\beta}$ is spanned by elements of the form $\alpha\otimes \gamma$,
$\alpha\in A_2$ and $\gamma\in R(2)$, 
$t_{\AR{\beta}}$ acts as $\mathrm{id}_{A_2}\otimes r_\be$ on $\AR{R}=A_2\otimes R$.
Therefore, $\varphi(G) = \la r_\be \mid \be \in R(2)\ra = \mathrm{Weyl}(R)$.
\end{proof}

\medskip

Next we will determine  the kernel $\Ker \varphi|_{G}$ of $\varphi$.

\begin{lem}\label{gen}
Let $R$, $E(R)$ and $G$ be defined as in Proposition \ref{weylpart}.
Then $\Ker \varphi|_{G}$ is generated by
\[
  \{ \rho_{\al\otimes \be}\mid  \al\in A_2,~ \be \in R,~ (\al,\al)=(\be,\be)=2 \},
\]
where $\rho_{\al\otimes \be}=\zeta^{(\al\tensor \be)_{(0)}}$ with $\zeta =\exp(2\pii/3)$.
\end{lem}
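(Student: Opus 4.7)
The plan is to prove both inclusions $H \subseteq \Ker\varphi|_G$ and $\Ker\varphi|_G \subseteq H$, where $H := \la \rho_{\al\otimes\be} \mid \al\in A_2(2),\ \be\in R(2)\ra$, ultimately by identifying $G/H$ with $\mathrm{Weyl}(R)$ via the Coxeter presentation. For cardinality reasons, $H = \Ker\varphi|_G$ will then follow from Proposition \ref{weylpart}.

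First, I would verify $H\subseteq \Ker\varphi|_G$. Each $\rho_{\al\otimes\be} = \exp(2\pii(\al\otimes\be)_{(0)}/3)$ lies in $N(V_{\AR{R}})$ and centralizes $\hat{\au}$, since $(\al,(1-\au)\al')\in 3\Z$ for every $\al'\in A_2$ (using $(1-\au)A_2 = 3A_2^*$); hence $\rho_{\al\otimes\be}\in \Ker\varphi$ by Theorem \ref{centralizer}. To see $\rho_{\al\otimes\be}\in G$, apply the Corollary following Lemma \ref{a2group} to the sub-VOA $V_{\AR{\be}}^{\hat{\au}}$: one obtains $\rho_{\al\otimes\be} = \sigma_{u^0}\sigma_{u^1}$ in $\aut(V_{\AR{\be}}^{\hat{\au}})$ with $u^0,u^1\in E(\be)\subseteq E(R)$. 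Promoting this identity to $\aut(V_{\AR{R}}^{\hat{\au}})$ is achieved by identifying each $\sigma_{u^i}$ with the specific automorphism $t_{\AR{\be}}\cd\rho_\be^i$ of the ambient $V_{\AR{R}}$, where $\rho_\be := \rho_{\al\otimes\be}$; this generalizes Lemma \ref{lem:5.4}. The relation $t_{\AR{\be}}\rho_\be t_{\AR{\be}}^{-1}=\rho_\be^{-1}$ (from $t_{\AR{\be}}(\al\otimes\be)=-\al\otimes\be$) then yields $\sigma_{u^i}\sigma_{u^j}=\rho_\be^{j-i}\in H$.

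Next I would show $H\trianglelefteq G$. Since $\AR{R}(2)=\varnothing$ by Lemma \ref{atb}, the weight-one space $(V_{\AR{R}})_1$ is spanned by $\{\gamma_{(-1)}\vac\}_{\gamma\in \AR{R}}$, on which $\sigma_u$ (for $u\in E(\be)$) acts as $\varphi(\sigma_u)=t_{\AR{\be}}=\id_{A_2}\otimes r_\be$ (from the proof of Proposition \ref{weylpart}). The conjugation formula $\sigma\exp(a_{(0)})\sigma^{-1}=\exp((\sigma a)_{(0)})$ then gives $\sigma_u\rho_{\al\otimes\be'}\sigma_u^{-1}=\rho_{\al\otimes r_\be\be'}\in H$, establishing normality.

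For the reverse inclusion, fix a reference $u_\be\in E(\be)$ for each $\be\in R(2)$. Because $R$ is simply laced (Lemma \ref{atb}), $\mathrm{Weyl}(R)$ has a Coxeter presentation whose defining relations $r_\be^2=1$ and $(r_\be r_{\be'})^{m_{\be,\be'}}=1$ satisfy $m_{\be,\be'}\in\{2,3\}$. Both relations hold exactly in $G$: the identity $\sigma_{u_\be}^2=1$ is trivial, while $(\sigma_{u_\be}\sigma_{u_{\be'}})^{m_{\be,\be'}}=1$ follows from Theorem \ref{thm:3.4} (commutation when $(\be,\be')=0$, so $m=2$) or the 3-transposition property combined with $\varphi(\sigma_{u_\be}\sigma_{u_{\be'}})=r_\be r_{\be'}$ having order $3$ (when $(\be,\be')=\pm 1$, forcing $m=3$). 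The assignment $r_\be\mapsto \sigma_{u_\be}H$ therefore defines a surjective homomorphism $\mathrm{Weyl}(R)\to G/H$; composing with the surjection $\bar\varphi\colon G/H\to \mathrm{Weyl}(R)$ induced by $\varphi$ (well-defined because $H\subseteq\Ker\varphi$, surjective by Proposition \ref{weylpart}) yields a surjection $\mathrm{Weyl}(R)\to \mathrm{Weyl}(R)$ that is the identity on generators. Both maps are therefore isomorphisms, so $G/H\cong\mathrm{Weyl}(R)=G/\Ker\varphi|_G$, i.e., $H=\Ker\varphi|_G$. The main obstacle is the identification $\sigma_{u^i}=t_{\AR{\be}}\rho_\be^i$ in $\aut(V_{\AR{R}})$ invoked in the first step: Lemma \ref{lem:5.4} treats only $V_{\sqrt{2}A_2}$, so extending it to the ambient lattice VOA requires tracking which lift of $t_{\AR{\be}}\in O(\AR{R})$ in $O(\hat{\AR{R}})$ the involution $\sigma_{u^i}$ represents, together with its inner factor in $N(V_{\AR{R}})$.
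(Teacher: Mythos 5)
Your strategy coincides with the paper's own: show that $H=\la \rho_{\al\otimes\be}\mid \al\in A_2(2),\,\be\in R(2)\ra$ lies in $G\cap\Ker\varphi|_G$, that it is normal in $G$, and that nothing else survives in the kernel. One remark on the first step: the detour you flag as ``the main obstacle'' --- pinning down $\sigma_{u^i}$ as a specific lift $t_{\AR{\be}}\cd\rho_\be^i$ in $\aut(V_{\AR{R}})$ --- is not needed. The paper works directly in the ambient VOA: by Lemma \ref{c45} one has $E(\be)=\{u_\be,\ \rho_{\al\otimes\be}u_\be,\ \rho_{\al\otimes\be}^2u_\be\}$, the general identity $\sigma_{g u}=g\sigma_u g^{-1}$ gives $\sigma_{\rho_{\al\otimes\be}u_\be}=\rho_{\al\otimes\be}\sigma_{u_\be}\rho_{\al\otimes\be}^{-1}$, and the conjugation formula $\sigma\exp(a_{(0)})\sigma^{-1}=\exp((\sigma a)_{(0)})$ together with $\varphi(\sigma_{u_\be})=t_{\AR{\be}}$ (Lemma \ref{phise}) yields $\sigma_{u_\be}\rho_{\al\otimes\be}\sigma_{u_\be}=\rho_{\al\otimes\be}^{-1}$; hence $\sigma_{u_\be}\sigma_{\rho_{\al\otimes\be}u_\be}=\rho_{\al\otimes\be}^{-2}=\rho_{\al\otimes\be}\in G$, with no bookkeeping of lifts.

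The genuine problem is the closing step. The group $\mathrm{Weyl}(R)$ is \emph{not} presented by taking one generator $r_\be$ for every $\be\in R(2)$ subject only to $r_\be^2=1$ and the dihedral relations $(r_\be r_{\be'})^{m_{\be,\be'}}=1$ with $m_{\be,\be'}\in\{2,3\}$: already for $R=A_2$ the three positive roots with pairwise $m=3$ present the affine Coxeter group of type $\tilde{A}_2$, which is infinite, not $\mathrm{S}_3$. So the assignment $r_\be\mapsto\sigma_{u_\be}H$ on all roots does not define a homomorphism out of $\mathrm{Weyl}(R)$. If instead you restrict to simple roots (where the Coxeter presentation is valid), the resulting homomorphism $\mathrm{Weyl}(R)\to G/H$ is well defined but its surjectivity is no longer automatic, since $G/H$ is a priori generated by the classes of $\sigma_{u_\be}$ for \emph{all} $\be\in R(2)$. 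The missing ingredient is the congruence $\sigma_{u_\gamma}\sigma_{u_\be}\sigma_{u_\gamma}\equiv\sigma_{u_{r_\gamma\be}}\pmod{H}$: since $\sigma_{u_\gamma}$ carries $V_{\AR{\be}}$ onto $V_{\AR{r_\gamma\be}}$, it sends $u_\be$ to some element $\rho_{\al\otimes r_\gamma\be}^{\,j}u_{r_\gamma\be}$ of $E(r_\gamma\be)$, and $\sigma_{\rho^j u}=\rho^{j}\sigma_u\rho^{-j}=\rho^{2j}\sigma_u\equiv\sigma_u\pmod{H}$. With this, the simple-root classes do generate $G/H$, and your composition with $\bar\varphi$ closes the argument. (The paper's own proof leaves the injectivity of $G/H\to\mathrm{Weyl}(R)$ implicit as well; your instinct to supply it is sound, but the presentation you invoke is the wrong one and the surjectivity claim needs the conjugation congruence above.)
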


\begin{proof}
Fix a root $\al\in A_2$.  
Let $\be$ be a root of $R$ and take a simple extendable $c=4/5$ Virasoro vector 
$u_\be \in V_{\AR{\be}}^{\hat{\nu}}$. 
Then $u_\be $, $\rho_{\al\otimes \be}u_\be$ and $\rho_{\al\otimes \be}^2 u_\be$ 
are all simple extendable $c=4/5$ Virasoro vectors in $V_{\AR{\be}}^{\hat{\nu}}$ 
by Lemma \ref{c45} (see also Remark \ref{rhoa}).
By Lemma \ref{a2group} and Remark \ref{rhoa}, we also have
\[
  \sigma_{u_\be} \sigma_{\rho_{\al\otimes \be}u_\be} 
  = \sigma_{u_\be} \rho_{\al\otimes \be} \sigma_{u_\be} \rho_{\al\otimes \be}^{-1} 
  = \rho_{\al\otimes \be}^{-2}=\rho_{\al\otimes \be}.
\]
Since $E(R)= \cup_{\be \in R(2)} \{ u_\be , \rho_{\al\otimes \be}u_\be, \rho_{\al\otimes \be}^2 u_\be\}$, 
it is clear that $G$ is generated by 
$\{\rho_{\al\otimes \be}, \sigma_{u_\be}\mid \be \in R(2)\}$. 
Moreover, 
$\sigma_{u_\gamma} \rho_{\alpha\tensor \beta} \sigma_{u_\gamma}=\rho_{t_{\mathcal{A}_\gamma}(\alpha\tensor \beta)}$
for $\gamma \in R(2)$ by Lemma \ref{phise}.
Hence $\Ker \varphi|_{G}$ is generated by 
$\{ \rho_{\al\otimes \be} \mid \be \in R(2)\}$ as desired.
\end{proof}

\begin{lem}\label{Hom}
  Let $R$ be a root lattice and let 
  $P(\AR{R})= \{ x \in \AR{R}\mid (x, \AR{R})\in 3\Z\}$. 
  Then $\abs{\,\Ker \varphi|_{G}\,} =[\AR{R} : P(\AR{R})]$.
\end{lem}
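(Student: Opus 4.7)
My plan is to construct a surjective group homomorphism $\Psi:\AR{R}\twoheadrightarrow \Ker\varphi|_G$ with kernel precisely $P(\AR{R})$; the index equality will then follow from the first isomorphism theorem. The natural candidate is
\[
  \Psi(\gamma):=\rho_\gamma=\zeta^{\gamma_{(0)}},\qquad \gamma\in \AR{R},\q \zeta=\exp(2\pii/3).
\]

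First I would verify that $\Psi$ is a well-defined group homomorphism landing in $\Ker\varphi\cap \cent_{\aut(V_{\AR{R}})}(\hat{\au})$. The map $\gamma\mapsto \gamma_{(0)}$ is $\Z$-linear and the Heisenberg zero-modes commute pairwise, so $\Psi$ is a homomorphism; $\rho_\gamma$ acts trivially on $\Span\{\al_{(-1)}\vac\mid \al\in \AR{R}\}=(V_{\AR{R}})_1$ so $\varphi(\rho_\gamma)=1$. For the centralizer condition, the computation in the proof of Theorem \ref{centralizer} shows $\rho_\gamma$ commutes with $\hat{\au}$ exactly when $(\gamma,(1-\au)\AR{R})\subset 3\Z$; since $(1-\au)A_2=3A_2^*$ (cf.\ Lemma \ref{AtensorR}), one has $(1-\au)\AR{R}=3A_2^*\otimes R$, so this condition holds automatically for every $\gamma\in A_2\otimes R$.

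Next I would identify $\mathrm{Im}\,\Psi$ with $\Ker\varphi|_G$. By Lemma \ref{gen}, $\Ker\varphi|_G$ is generated by $\{\rho_{\al\otimes\be}\mid \al\in A_2(2),\ \be\in R(2)\}$. Since $A_2$ and $R$ are each spanned over $\Z$ by their roots, the pure tensors $\al\otimes\be$ with $\al\in A_2(2)$ and $\be\in R(2)$ generate $\AR{R}=A_2\otimes R$ as a $\Z$-module. Because $\Psi$ is a homomorphism, $\Psi(\AR{R})$ is precisely the subgroup generated by these $\rho_{\al\otimes\be}$, hence coincides with $\Ker\varphi|_G$.

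Finally I would compute $\Ker\Psi$. Since $\rho_\gamma$ acts as the identity on $M(1)$ and multiplies $\ee^\be$ by $\zeta^{(\gamma,\be)}$ for $\be\in\AR{R}$, the equality $\rho_\gamma=\mathrm{id}$ on $V_{\AR{R}}$ is equivalent to $(\gamma,\AR{R})\subset 3\Z$, i.e., $\gamma\in P(\AR{R})$. The first isomorphism theorem then yields $\Ker\varphi|_G\cong \AR{R}/P(\AR{R})$, so $|\Ker\varphi|_G|=[\AR{R}:P(\AR{R})]$, as required. No step is technically demanding; the only point requiring attention is confirming that triviality of the automorphism $\rho_\gamma$ is detected by pairing with the full lattice $\AR{R}$ (so the quotient is by $P(\AR{R})$ rather than some intermediate sublattice like $((1-\au)\AR{R})^*\cap \AR{R}$), but this is unambiguous from the explicit formula for its action on $\ee^\be$.
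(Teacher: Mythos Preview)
Your proposal is correct and follows essentially the same approach as the paper: define the homomorphism $\gamma\mapsto\rho_\gamma$ from $\AR{R}$ to $\Ker\varphi|_G$, use Lemma \ref{gen} together with the fact that pure root tensors span $\AR{R}$ to obtain surjectivity, compute the kernel as $P(\AR{R})$ from the explicit action on $\ee^\beta$, and conclude by the first isomorphism theorem. Your write-up is more detailed (in particular you explicitly check that $\rho_\gamma$ centralizes $\hat{\au}$, which the paper leaves implicit), but the argument is the same.
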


\begin{proof}
Since $\rho: \AR{R}\longto \Ker \varphi_G$ is linear and surjective and the roots 
$\{\al\otimes \beta \mid \al\in A_2(2),~ \be \in R(2)\}$ spans $\AR{R}=A_2\otimes R$, 
it follows from Lemma \ref{gen} that $\rho_x\in \Ker \varphi_G$ if and only if 
$(x,\AR{R}) \in 3\Z$ for $x\in \AR{R}$.
Thus  $\big| \Ker \varphi|_{G}\big| =[\AR{R}: P(\AR{R})]$ as desired.
\end{proof}

\begin{lem}\label{KR}
  Let $R$ be a simple root lattice and let 
  $P(\AR{R})= \{ \gamma \in \AR{R}\mid (\gamma, \al)\in 3\Z\}$.
  \\
  (1) If $R\neq E_6$ or $A_n$ with $n+1\equiv 0\mod 3$, 
  then $[\AR{R}:P(\AR{R})]= 3^{\,\rank R}$.
  \\
  (2) If $R= E_6$ or  $R=A_n$ with $n+1\equiv 0\mod 3$, 
  then $[\AR{R}:P(\AR{R})]= 3^{\,\rank R-1}$.
\end{lem}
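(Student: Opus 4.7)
The plan is to identify the index $[\AR{R}:P(\AR{R})]$ with the $\F_3$-rank of the Gram matrix of $\AR{R}$ modulo $3$, and then to exploit the tensor product structure $\AR{R}=A_2\otimes R$ to reduce everything to a property of $R$ alone.

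First, observe that the inner product on $\AR{R}$ descends to an $\F_3$-valued symmetric bilinear form on the $\F_3$-vector space $\AR{R}/3\AR{R}$, and $P(\AR{R})/3\AR{R}$ is by definition the radical of this form. Letting $G$ denote any Gram matrix of $\AR{R}$, this gives
\[
  [\AR{R}:P(\AR{R})]
  \;=\; \frac{|\AR{R}/3\AR{R}|}{|P(\AR{R})/3\AR{R}|}
  \;=\; 3^{\,\rank_{\F_3}(G\bmod 3)}.
\]

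Second, since $\AR{R}=A_2\otimes R$, its Gram matrix is the Kronecker product $G_{A_2}\otimes G_R$, whose rank over any field is the product of the two ranks. A direct check gives
\[
  G_{A_2} \;\equiv\; -\begin{pmatrix}1&1\\1&1\end{pmatrix} \pmod 3,
\]
so $\rank_{\F_3}(G_{A_2}\bmod 3)=1$. Consequently $[\AR{R}:P(\AR{R})]=3^{\,\rank_{\F_3}(G_R\bmod 3)}$, and the computation is reduced from $\AR{R}$ to $R$ itself.

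Third, for any positive definite lattice $L$ of rank $n$ one has $\rank_{\F_3}(G_L\bmod 3)=n-d_3(L)$, where $d_3(L)$ denotes the $3$-rank of the discriminant group $L^*/L$; this is immediate from Smith normal form of $G_L$ (the elementary divisors divisible by $3$ account exactly for the drop in rank modulo $3$). For the simple root lattices the discriminant groups are well known:
\[
  A_n^*/A_n\cong \Z/(n+1)\Z,\q
  D_n^*/D_n\cong (\Z/2)^{2}\ \text{or}\ \Z/4,\q
  E_6^*/E_6\cong \Z/3,\q
  E_7^*/E_7\cong \Z/2,\q
  E_8^*/E_8=0.
\]
Hence $d_3(R)=1$ precisely when $R=E_6$ or $R=A_n$ with $n+1\equiv 0\pmod 3$, and $d_3(R)=0$ in all remaining cases. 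Substituting yields the indices $3^{\rank R}$ or $3^{\rank R-1}$ as claimed in (1) and (2).

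The manipulations above are largely routine; the essential observation is that $G_{A_2}\bmod 3$ has rank $1$, which collapses a rank-$2\rank R$ problem on $\AR{R}$ down to a rank-$\rank R$ problem on $R$ that is governed solely by whether $3$ divides $|R^*/R|$.
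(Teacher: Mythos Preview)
Your proof is correct and takes a genuinely different route from the paper's argument. The paper works directly with the lattice structure: it uses the decomposition $A_2 = 3A_2^* \cup (\alpha + 3A_2^*) \cup (-\alpha + 3A_2^*)$ to show that every class in $\AR{R}/(1-\nu)\AR{R}$ is represented by $\alpha \otimes \gamma$ for a fixed root $\alpha$, deduces that $P(\AR{R}) = 3A_2^* \otimes R + A_2 \otimes (3R^* \cap R)$, and then computes the index case by case via the second isomorphism theorem. Your approach is more linear-algebraic: you recognize $P(\AR{R})/3\AR{R}$ as the radical of the $\F_3$-form, pass to the Gram matrix, and exploit the Kronecker factorization $G_{\AR{R}} = G_{A_2} \otimes G_R$ together with the observation that $G_{A_2}$ has rank $1$ over $\F_3$. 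This cleanly reduces the problem to $\rank_{\F_3}(G_R \bmod 3) = \rank R - d_3(R)$, with the Smith normal form handling the bookkeeping uniformly. Your argument is shorter and more conceptual; the paper's argument is more explicit about the structure of $P(\AR{R})$ itself (which is not needed here but connects to the surrounding lemmas on $(1-\nu)\AR{R}$).
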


\begin{proof}
First we recall that $(1-\nu)(A_2\otimes R) = 3A_2^* \otimes R$ and hence 
$(1-\nu)\AR{R} < P(\AR{R})$.

Let $\al \in A_2$ be a root. 
Then $A_2= 3A_2^* \cup (\al +3A_2^*)\cup (-\al + 3A_2^*)$. 
Thus for any $\delta \in A_2\setminus 3A_2^* $ and $\be\in R$, we have
\[
  \delta\otimes \be \equiv \pm \al \otimes   \be \mod (1-\nu)\AR{R}.
\]
Therefore, every element in $\AR{R}\setminus (1-\nu)\AR{R}$ is congruent to 
$\al\otimes \gamma$ modulo $(1-\nu)\AR{R}$ for some $\gamma \in R$. 
Moreover, $\al\otimes \gamma \in P(\AR{R})$ if and only if 
\[
  (\al_i\otimes \be_j, \al\otimes \gamma) 
  = (\al_i, \al)\cdot (\be_j, \gamma ) \in 3\Z 
  \quad \text{ for all } i,j,
\]
where $\al_1, \al_2$ are simple roots of $A_2$ and $\be_1, \dots, \be_n$ are 
simple roots of $R$. 
Since $(\al_i, \al)=\pm 1 \mod 3$ for all $i$, we have $(\gamma, \be_j)\in 3\Z$ 
for all $j=1, \dots,n$ and hence $\gamma \in 3 R^*\cap R$. 
Therefore, we have 
$P(\AR{R}) = 3A_2^* \otimes R+A_2\otimes (3R^*\cap R)$.

If $R\neq E_6$ or $A_n$ with $n+1\equiv 0\mod 3$, then $[R^*:R]$ is relatively 
prime to $3$ and hence $3 R^*\cap R=3R$. 
Thus,  $P(\AR{R}) =3A_2^*\tensor R$ and 
$[\AR{R}:P(\AR{R})]=[A_2:3A_2^*]^{\,\rank R}=3^{\,\rank R}$.

If $R= E_6$ or $R=A_n$ with $n+1\equiv 0\mod 3$, then $3R^*\cap R \gneq 3R$ and hence 
$P(\AR{R}) = 3A_2^*\otimes R+A_2\otimes (3R^*\cap R) \gneq (1-\nu)\AR{R}$.
By the second isomorphism theorem, we have
\[
\begin{split}
  \frac{P(\AR{R})}{(1-\nu)\AR{R}} 
  &= \frac { 3A_2^* \otimes R + A_2\otimes (3R^*\cap R)}{3A_2^* \otimes R}
  \\
  & \cong \frac{A_2\otimes (3R^*\cap R)}{ (3A_2^* \otimes R) \cap (A_2\otimes (3R^*\cap R))}.
\end{split}
\]
Clearly $3A_2^* \otimes R \neq A_2\otimes (3R^*\cap R)$ and 
$(3A_2^* \otimes R) \cap (A_2\otimes (3R^*\cap R))$ contains $A_2 \otimes 3R$. 
Moreover, $[A_2\otimes (3R^*\cap R) : A_2\otimes 3R]= 3^2$ since $A_2$ has rank 2 
and $[3R^*\cap R:3R]=3$.

Let $a\in A_2^*\setminus A_2$ and $b\in R^*\setminus R$ such that $3b\in R\setminus 3R$. 
Then the element $3a\otimes 3b \in (3A_2^* \otimes R) \cap (A_2\otimes (3R^*\cap R))$ 
but it is not contained in $A_2\otimes 3R$. 
Therefore,
\[
  3^2 \gneq \abs{\frac{A_2\otimes (3R^*\cap R)}{ (3A_2^* \otimes R) 
  \cap (A_2\otimes (3R^*\cap R))}}\gneq 1
\]
and hence we must have 
$$
  \left| \frac{P(\AR{R})}{(1-\nu)\AR{R}}\right|
  =\left|\frac{A_2\otimes (3R^*\cap R)}{ (3A_2^* \otimes R)
  \cap (A_2\otimes (3R^*\cap R))}\right|=3.
$$ 
Therefore, $[\AR{R}:P(\AR{R})]= 3^{\,\rank R} / 3 = 3^{\,\rank R-1}$ as desired.
\end{proof}

\begin{thm}
Let $R$ be a simple root lattice and let $E(R)$ and $G$ be defined as in Proposition \ref{weylpart}.

(1) If $R\neq E_6$ or $A_n$ with $n+1\equiv 0\mod3$, then $G$ has the shape $3^{\rank R}. \mathrm{Weyl}(R)$

(2) If $R=E_6$ or $R=A_n$, $n+1\equiv 0\mod 3$, then $G$ has the shape $3^{\rank R-1}. \mathrm{Weyl}(R)$.
\end{thm}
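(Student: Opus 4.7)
The plan is to combine the preceding results in this section via the short exact sequence furnished by Proposition \ref{prop:5.20}. First, observe that each $\sigma_u$ for $u \in E(R)$ lies in the centralizer of $\hat{\au}$ in $\aut(V_{\AR{R}})$: indeed $u \in V_{\AR{\be}}^{\hat{\au}}$ and $\sigma_u$ is defined by the $\vir(u)$-isotypical decomposition, hence commutes with $\hat{\au}$. Thus $G \leq \cent_{\aut(V_{\AR{R}})}(\hat{\au})$, and restricting the homomorphism $\varphi$ of Proposition \ref{prop:5.20} to $G$ yields the short exact sequence
\begin{equation*}
  1 \longto \Ker \varphi|_G \longto G \overset{\varphi}\longto \varphi(G) \longto 1.
\end{equation*}

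By Proposition \ref{weylpart}, the quotient is $\varphi(G) \cong \mathrm{Weyl}(R)$. It remains to determine the kernel. By Lemma \ref{gen}, $\Ker \varphi|_G$ is generated by $\{\rho_{\al \otimes \be} \mid \al \in A_2(2),~\be \in R(2)\}$, all of the form $\zeta^{x_{(0)}}$ for $x \in \AR{R}$. Since the zero-modes $x_{(0)}$ with $x$ in the lattice pairwise commute on $V_{\AR{R}}$, the kernel is abelian; each generator has order dividing $3$, so $\Ker \varphi|_G$ is elementary abelian of exponent $3$. Its order is $[\AR{R} : P(\AR{R})]$ by Lemma \ref{Hom}, and Lemma \ref{KR} evaluates this index to be $3^{\rank R}$ in case (1) and $3^{\rank R - 1}$ in case (2).

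Assembling these, $G$ is an extension of $\mathrm{Weyl}(R)$ by an elementary abelian $3$-group of the stated rank, giving the shapes $3^{\rank R}.\mathrm{Weyl}(R)$ and $3^{\rank R - 1}.\mathrm{Weyl}(R)$ respectively. The argument is essentially a bookkeeping exercise over the preceding lemmas, so there is no substantial obstacle; the genuinely delicate point has already been handled in Lemma \ref{KR}, namely isolating the exceptional lattices $R = E_6$ and $R = A_n$ with $n+1 \equiv 0 \pmod 3$, where $3R^* \cap R$ strictly contains $3R$ and contributes a single extra relation that trims the kernel rank by one.
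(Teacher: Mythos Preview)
Your proof is correct and follows essentially the same route as the paper, which simply cites Proposition \ref{weylpart} and Lemmas \ref{Hom} and \ref{KR}. You have merely made explicit the bookkeeping (that $G\leq \cent_{\aut(V_{\AR{R}})}(\hat{\au})$ and that the kernel is an elementary abelian $3$-group) which the paper leaves implicit in the shape notation.
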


\begin{proof}
 It follows from Proposition \ref{weylpart} and Lemmas \ref{Hom} and \ref{KR}.
\end{proof}

\begin{rem}
If we take $R=A_2$, then $V_{A_2\otimes A_2}^{\hat{\au}}$ has nine  extendable $c=4/5$ simple Virasoro vectors and the
subgroup generated by the corresponding $\sigma$-involutions has the shape $3.\mathrm{S}_3\cong 3^2:2$.
\end{rem}

\subsection{Lattices constructed from $\mathbb{F}_4$-codes}
Next  we discuss a construction of lattices from codes over $\F_4$
(cf.~\cite{cs, KLY}).

Let $\zeta :=(-1+\sqrt{-3})/2$ be a primitive cubic root of unity and let
$\mathcal{E}:= \Z[\zeta]$ be the ring of Eisenstein integers.
Then $\mathcal{E}/2\mathcal{E}=\{0, 1, \zeta, \zeta^2\} \cong \mathbb{F}_4$.
Let $\eta: \mathcal{E} \longto \mathcal{E}/2\mathcal{E}$ be the natural quotient map.

A linear $\F_4$-code $\mathcal{C}$ of length $n$ is a vector subspace of $\F_4^n$.
The weight $\wt (\alpha)$ of an element $\al=(\al_1, \dots, \al_n)\in \mathcal{C}$ is defined
to be the number of non-zero coordinates in $\al$.
We can define an Hermitian form on $\F_4^n$ by
\[
  \la (x_1, \dots, x_n),  (y_1, \dots, y_n)\ra =\sum_{i=1}^n x_i \ol{y_i}\in \F_4,
\]
where $\ol{0}=0$, $\ol{1}=1$, $\ol{\zeta}=\zeta^2$ and $\overline{\zeta^2}=\zeta$.

For any linear $\F_4$-code $\mathcal{C}$ of length $n$,  we define
\begin{equation}\label{eq:5.4}
  L_\mathcal{C} =\{ (x_1, \dots, x_n)\in \mathcal{E}^n \mid
  (\eta x_1, \dots, \eta x_n)\in \mathcal{C}\}.
\end{equation}
It is clear that $L_{\mathcal{C}}$ is a free $\Z$-module.
Moreover, one can define a real bilinear form on $L_{\mathcal{C}}$ by
$(u,v)= \mathrm{Re}\, \la u,v\ra$, where $\la \, , \,\ra $ is the canonical
Hermitian bilinear form on $\C^n$.
The \textit{norm} of a vector $v$ in $L_{\mathcal{C}}$ is defined by
$(v,v)=\la v, v\ra $.
Note also that the bilinear form $(~\,,~)$ is $\Z$-valued on $L_{\mathcal{C}}$.
In addition, $ 2\mathcal{E}\cong \sqrt{2}A_2$ as an integral lattice and hence
$L_{\mathcal{C}}$ as an integral lattice contains a sublattice isometric to $\sqrt{2}A_2^{n}$.
In the following, we will consider $L_{\mathcal{C}}$ as an integral lattice.

\begin{lem}
Let $\mathcal{C}$ be a linear $\F_4$-code and $L_\mathcal{C}$ the associated integral lattice.
\\
(1)~The lattice $L_{\mathcal{C}}$ is even if and only if  $\mathcal{C}$ is even,
i.e., $\wt(\al)$ is even for all $\al \in \mathcal{C}$.
\\
(2)~If the minimal norm of $\mathcal{C}$ is greater than or equal to $4$,
then $L_{\mathcal{C}}(2)=\varnothing$.
\end{lem}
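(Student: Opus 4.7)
The plan is to reduce both claims to a parity/size analysis of norms in the Eisenstein integers $\mathcal{E}=\Z[\zeta]$, using the fact that an element $x=a+b\zeta\in \mathcal{E}$ has norm $|x|^2=a^2-ab+b^2$.

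First I would record the key arithmetic fact: for $x\in \mathcal{E}$, the norm $|x|^2$ is odd if $\eta(x)\neq 0$ and is divisible by $4$ if $\eta(x)=0$. The second assertion is immediate, since $x\in 2\mathcal{E}$ forces $|x|^2=4|y|^2$ for some $y\in\mathcal{E}$. For the first, pick a representative $r\in\{1,\zeta,\zeta^2\}$ of $\eta(x)$; each satisfies $|r|^2=1$. Writing $x=r+2y$ gives
\[
  |x|^2 = 1 + 4\,\mathrm{Re}(r\bar y) + 4|y|^2,
\]
which is odd. In particular, for any $v=(v_1,\dots,v_n)\in L_{\mathcal{C}}$ we have
\[
  (v,v) = \sum_{i=1}^n |v_i|^2 \equiv \#\{\,i : \eta(v_i)\neq 0\,\} = \wt(\eta v) \pmod{2}.
\]

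For part (1), this congruence together with the surjectivity of the map $\eta : L_{\mathcal{C}}\longto \mathcal{C}$ gives the equivalence at once. If $\mathcal{C}$ is even then $\wt(\eta v)$ is even for every $v\in L_{\mathcal{C}}$, so $(v,v)\in 2\Z$ and $L_{\mathcal{C}}$ is even. Conversely, if $\mathcal{C}$ contains a codeword $c$ of odd weight, lift $c$ to some $v\in L_{\mathcal{C}}$; then $(v,v)$ is odd.

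For part (2), suppose for contradiction that $v\in L_{\mathcal{C}}$ has $(v,v)=2$. If $\eta v=0$ then $v\in (2\mathcal{E})^n$, so every nonzero coordinate contributes at least $4$ to the norm, giving $(v,v)\in\{0\}\cup[4,\infty)$, contradiction. Hence $\eta v \neq 0$, and by the norm computation above each coordinate with $\eta(v_i)\neq 0$ contributes at least $1$ to $(v,v)$, so
\[
  2 = (v,v) \geq \wt(\eta v) \geq 4,
\]
using the hypothesis that the minimum weight of $\mathcal{C}$ is $\geq 4$. This contradiction shows $L_{\mathcal{C}}(2)=\varnothing$.

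There is no real obstacle here; the only point requiring care is the claim that $\eta : L_{\mathcal{C}}\to \mathcal{C}$ is surjective (so that codewords of odd weight actually produce odd-norm vectors in part (1)), which is immediate from the definition \eqref{eq:5.4} of $L_{\mathcal{C}}$.
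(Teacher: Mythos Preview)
Your argument is correct. The paper states this lemma without proof, so there is nothing to compare against; your direct parity analysis of Eisenstein norms is exactly the natural verification one would supply.

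One small point to make explicit: when you write $|x|^2 = 1 + 4\,\mathrm{Re}(r\bar y) + 4|y|^2$ and conclude this is odd, you are implicitly using that $4\,\mathrm{Re}(r\bar y)$ is an \emph{even} integer, not merely an integer. This holds because $r\bar y\in\mathcal{E}$ and every element of $\mathcal{E}$ has real part in $\tfrac12\Z$ (since $\mathrm{Re}(\zeta)=-\tfrac12$), so $2\,\mathrm{Re}(r\bar y)\in\Z$ and hence $4\,\mathrm{Re}(r\bar y)\in 2\Z$. Alternatively one can bypass this by checking the four residue classes of $(a,b)\bmod 2$ directly in $a^2-ab+b^2$.
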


\begin{df}
Let $\au$ be an isometry of the integral lattice $L_{\mathcal{C}}$ defined
by $\au v=\zeta v$ for any $v\in L_{\mathcal{C}}$.
Then $\au$ is fixed point free of order $3$.
\end{df}

\begin{nota}\label{Li}
Let $\hat{\au}$ be a lift of $\au$ in $\aut(V_{L_{\mathcal{C}}})$ and
denote $L=L_{\mathcal{C}}$.

For $i=0,1$, we denote
\[
  L^i=\{ \alpha \in L(4) \mid  \ee^\alpha_{(1)} \hat{\au}(\ee^{\alpha})
  = (-1)^i \hat{\au}^2(\ee^{-\al})\}.
\]
Note that
$\ee^\alpha \cdot \hat{\au}(\ee^{\alpha}) \in \{\pm \ee^{\al+\au\al}\}$
and $\ee^{\alpha +\au\alpha}=\ee^{-\au^2\al}$. 
\end{nota}

\begin{lem}\label{eA}
  Let $\al\in L^i$ and let $A(\al)=\Span_\Z\{\al , \au(\al)\}$.
  Let $\om_{A(\al)}$ be the conformal element of $V_{A(\al)}$.
  Then
  \[
    e_{A(\al)} = \frac{2}{5} \om_{A(\al)} +(-1)^i \frac{1}{5}
    \left(\ee^{\al}+ \ee^{-\al} + \hat{\au}(\ee^{\al} +\ee^{-\al})
    +\hat{\au}^2( \ee^{\al} + \ee^{-\al})\right)
  \]
  is a Virasoro vector of central charge $4/5$, which is fixed by $\hat{\au}$.
\end{lem}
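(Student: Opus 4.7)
The plan is to reduce the statement to the Dong--Li--Mason--Norton construction in the untwisted lattice VOA $V_{\sqrt{2}A_2}$.  First I would identify $A(\al)$ with $\sqrt{2}A_2$: since $\au$ is fixed-point free of order three, $\au^2\al=-\al-\au\al$, and from $0=((1+\au+\au^2)\al,\al)=(\al,\al)+2(\al,\au\al)$ we deduce $(\al,\au\al)=-2$.  Hence the Gram matrix of $\{\al,\au\al\}$ is that of $\sqrt{2}A_2$, and $V_{A(\al)}$ is a sub VOA of $V_L$ that visibly contains the proposed vector $e_{A(\al)}$.

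Second, I would invoke the DLMN formula \eqref{conformal}, which, translated to the present setting with ``fundamental roots'' $\al,\au\al$, asserts that $\tfr{2}{5}\om_{A(\al)}+\tfr{1}{5}\sum_{k=0}^{2}(\ee^{\au^k\al}+\ee^{-\au^k\al})$ is a simple $c=4/5$ Virasoro vector in $V_{A(\al)}\cong V_{\sqrt{2}A_2}$.  However, that formula is written using the ordinary group algebra $\C[\sqrt{2}A_2]$, while $V_L$ is built from the twisted group algebra $\C\{L\}$, and the restriction of the 2-cocycle of $\hat L$ to $A(\al)$, together with the choice of lift $\hat{\au}$, can produce a non-trivial sign.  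The condition $\al\in L^i$ records exactly this combined sign: the identity $\ee^\al_{(1)}\hat{\au}(\ee^\al)=(-1)^i\hat{\au}^2(\ee^{-\al})$, combined with $\ee^{\al+\au\al}=\ee^{-\au^2\al}$, says that the scalar relating the DLMN-style generators on $\sqrt{2}A_2$ and the $\hat{\au}$-orbit $\{\hat{\au}^k\ee^{\pm\al}\}$ inside $V_L$ is precisely $(-1)^i$.  Inserting this scalar into the DLMN expression yields the vector $e_{A(\al)}$ defined in the statement, so $e_{(1)}e=2e$ and $e_{(3)}e=\tfr{2}{5}\vac$ follow from DLMN's identity transported to $V_{A(\al)}$.

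Third, $\hat{\au}$-invariance of $e_{A(\al)}$ is immediate.  The conformal vector $\om_{A(\al)}$ is fixed since $A(\al)$ is $\au$-stable, and $\sum_{k=0}^{2}\hat{\au}^k(\ee^\al+\ee^{-\al})$ is a full $\la\hat{\au}\ra$-orbit sum, hence $\hat{\au}$-invariant.

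The main obstacle is the sign bookkeeping in the second step: one must carefully verify that the relation $\ee^\al_{(1)}\hat{\au}(\ee^\al)=(-1)^i\hat{\au}^2(\ee^{-\al})$ really does encode the discrepancy between the twisted group algebra restricted to $A(\al)$ and the untwisted $\C[\sqrt{2}A_2]$, i.e.\ that the net cocycle sign appearing in every cross product $\ee^{\pm\au^j\al}_{(n)}\ee^{\pm\au^k\al}$ that enters the Virasoro relations $e_{(1)}e=2e$ and $e_{(3)}e=\tfr{2}{5}\vac$ is correctly cancelled by the single prefactor $(-1)^i$.  Once this correspondence is established, the remaining verification is a routine transport of DLMN's computation in $V_{\sqrt{2}A_2}$.
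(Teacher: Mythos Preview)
Your approach is correct in outline and takes a genuinely different route from the paper.  The paper simply expands $e_{A(\al)_{(1)}}e_{A(\al)}$ and $(e_{A(\al)}\mid e_{A(\al)})$ directly inside $V_L$, using the defining relation $\ee^{\al}_{(1)}\hat{\au}(\ee^{\al})=(-1)^i\hat{\au}^2(\ee^{-\al})$ together with the automorphism property of $\hat{\au}$ to handle every cross term, and reads off $2e_{A(\al)}$ and $2/5$.  You instead propose to transport the DLMN vector $u^0$ through an isomorphism $V_{A(\al)}\simeq V_{\sqrt{2}A_2}$ and argue that the single sign $(-1)^i$ records the net discrepancy between the twisted and untwisted group algebras.

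What each buys: your reduction is conceptually cleaner and explains \emph{why} the formula should look like DLMN's with a global sign, but the ``main obstacle'' you flag---showing that one relation in $L^i$ forces all the relevant cocycle signs to align---is essentially the same bookkeeping the paper does explicitly.  Concretely, since $\hat{\au}$ is a VOA automorphism, every product $\hat{\au}^j(\ee^{\pm\al})_{(n)}\hat{\au}^k(\ee^{\pm\al})$ equals $\hat{\au}^j$ applied to $\ee^{\pm\al}_{(n)}\hat{\au}^{k-j}(\ee^{\pm\al})$, so all cross terms reduce to the $L^i$ relation and its $\hat{\au}$-translates; this is the observation that makes your sign-tracking work, and it is also what drives the paper's direct computation.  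Neither route saves substantial labor over the other, though the paper's is more self-contained.
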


\begin{proof}
By direct calculation, we have
\[
\begin{split}
  {e_{A(\al)}}_{(1)}e_{A(\alpha)} =
  & \frac{1}{25}\left[ 8\om_{A(\al)} +(-1)^i 8 \left(\ee^{\al}+ \ee^{-\al}
  + \hat{\au}(\ee^{\al} +\ee^{-\al})
  + \hat{\au}^2( \ee^{\al} + \ee^{-\al})\right) \right.
  \\
  & \left. \  +\left(\ee^{\al}+ \ee^{-\al} + \hat{\au}(\ee^{\al} +\ee^{-\al})
  + \hat{\au}^2( \ee^{\al} + \ee^{-\al})\right)^2 \right]
  \\
  =& \frac{1}{25}\left[ 8\om_{A(\al)} +(-1)^i 8 \left(\ee^{\al}+ \ee^{-\al}
  + \hat{\au}(\ee^{\al} +\ee^{-\al})+ \hat{\au}^2( \ee^{\al} + \ee^{-\al})\right) \right.
  \\
  & \  +2\left(\al(-1)^2+ (\au\al(-1))^2+ (\au^2 \al(-1))^2\right)
  \\
  & \ \left.  +(-1)^i 2 \left(\ee^{\al}+ \ee^{-\al} + \hat{\au}(\ee^{\al} +\ee^{-\al})
  + \hat{\au}^2( \ee^{\al} + \ee^{-\al})\right)  \right]
  \\
  = &\frac{1}{25} \left[ 20\omega_{A(\al)} +10 (-1)^i \left(\ee^{\al}+ \ee^{-\al}
  + \hat{\au}(\ee^{\al} +\ee^{-\al})+ \hat{\au}^2( \ee^{\al} + \ee^{-\al})\right) \right]
  \\
  =2e_{A(\al)}.
\end{split}
\]
Moreover,
\[
  (e_{A(\al)} | e_{A(\al)})
  = \frac{1}{25}\l( 4(\om_{A(\al)} | \om_{A(\al)}) + 6\r)
  = \frac{10}{25}
  = \frac{2}5
\]
as desired.
\end{proof}

\begin{lem}
Let $\mathcal{C}$ be an even $\F_4$-code with length $n$ and minimal norm $\geq 4$.
Then we have
$$
  \cent_{\aut(V_{L_\mathcal{C}})}(\au) \cong
  \hom(L_{\mathcal{C}}/(1-\au)L_{\mathcal{C}}, \Z_3).\cent_{O(L_\mathcal{C})}(\au)
  \cong 3^{n}. \cent_{O(L_\mathcal{C})}(\au) .
$$
\end{lem}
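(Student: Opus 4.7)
The plan is to invoke Theorem \ref{centralizer} directly. Its hypotheses are satisfied here: the isometry $\au$ of $L_\mathcal{C}$ is fixed point free of prime order $3$, and the preceding lemma guarantees $L_\mathcal{C}(2)=\varnothing$ because $\mathcal{C}$ has minimum weight at least $4$. Feeding $L=L_\mathcal{C}$ and $p=3$ into Theorem \ref{centralizer} yields the exact sequence
\[
  1 \longto \hom(L_\mathcal{C}/(1-\au)L_\mathcal{C},\Z_3) \longto \cent_{\aut(V_{L_\mathcal{C}})}(\hat{\au}) \stackrel{\varphi}\longto \cent_{O(L_\mathcal{C})}(\au) \longto 1,
\]
which already gives the first isomorphism $\cent_{\aut(V_{L_\mathcal{C}})}(\hat{\au}) \cong \hom(L_\mathcal{C}/(1-\au)L_\mathcal{C},\Z_3).\cent_{O(L_\mathcal{C})}(\au)$.

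What remains is to identify $\hom(L_\mathcal{C}/(1-\au)L_\mathcal{C},\Z_3)$ with a group of order $3^n$. The key observation is that $\au$ acts on $L_\mathcal{C}$ by multiplication by the Eisenstein unit $\zeta$, so $L_\mathcal{C}$ is naturally an $\mathcal{E}$-submodule of $\mathcal{E}^n$. This is immediate from the definition \eqref{eq:5.4}: for $\lambda\in\mathcal{E}$ and $x\in L_\mathcal{C}$ one has $\eta(\lambda x_i)=\eta(\lambda)\eta(x_i)$, and the $\F_4$-linearity of $\mathcal{C}$ (with $\F_4=\mathcal{E}/2\mathcal{E}$) ensures $\lambda x\in L_\mathcal{C}$. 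Since $\mathcal{E}$ is a PID and $L_\mathcal{C}$ has $\Z$-rank $2n$ inside $\mathcal{E}^n$, the lattice $L_\mathcal{C}$ is free of $\mathcal{E}$-rank $n$. Consequently
\[
  L_\mathcal{C}/(1-\au)L_\mathcal{C} = L_\mathcal{C}/(1-\zeta)L_\mathcal{C} \cong (\mathcal{E}/(1-\zeta)\mathcal{E})^n \cong \F_3^n,
\]
whose dual $\hom(\cdot,\Z_3)$ has order $3^n$ and is elementary abelian.

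The main obstacle is nothing more than being careful with the $\mathcal{E}$-module structure, which becomes transparent once the $\F_4$-linearity of $\mathcal{C}$ is used. If one prefers to bypass any module theory, an equivalent argument is to note that $(1-\au)=(1-\zeta)$ is an injective $\Z$-linear endomorphism of the rank-$2n$ free abelian group $L_\mathcal{C}$ whose determinant equals $N_{\mathcal{E}/\Z}(1-\zeta)^n=3^n$, so its cokernel automatically has order $3^n$. Combining this count with the exact sequence above produces the second asserted isomorphism $\cent_{\aut(V_{L_\mathcal{C}})}(\hat{\au})\cong 3^n.\cent_{O(L_\mathcal{C})}(\au)$.
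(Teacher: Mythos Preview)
Your argument is correct and follows the same overall architecture as the paper: invoke Theorem \ref{centralizer} (checking $L_\mathcal{C}(2)=\varnothing$ via the minimum-weight hypothesis) to obtain the exact sequence, then compute $\abs{L_\mathcal{C}/(1-\au)L_\mathcal{C}}=3^n$.

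The only point of departure is in this last index computation. You exploit the Eisenstein-module structure of $L_\mathcal{C}$, either by recognizing it as a free $\mathcal{E}$-module of rank $n$ and reducing modulo the prime $(1-\zeta)$, or equivalently by computing the determinant of $(1-\zeta)$ via the norm $N_{\mathcal{E}/\Z}(1-\zeta)=3$. The paper instead argues purely at the level of $\Z$-lattices with a fixed-point-free order-$3$ isometry: from $1+\au+\au^2=0$ one gets $(1-\au)^2=-3\au$, so $(1-\au)^2L_\mathcal{C}=3L_\mathcal{C}$, and since $(1-\au)$ is injective the two successive quotients in $L_\mathcal{C}\supset(1-\au)L_\mathcal{C}\supset 3L_\mathcal{C}$ have equal order, forcing $[L_\mathcal{C}:(1-\au)L_\mathcal{C}]^2=3^{2n}$. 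Your approach is more structural and identifies the quotient as $\F_3^n$ explicitly; the paper's is slightly more general in that it applies verbatim to any even lattice of rank $2n$ admitting such an isometry, without reference to $\mathcal{E}$.
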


\begin{proof}
Since the minimal norm of $\mathcal{C}\geq 4$, we have
$L_{\mathcal{C}}(2)=\varnothing$.
Hence by Theorem\ref{centralizer}, we have an exact sequence
\[
  1 \longto \hom(L_{\mathcal{C}}/(1-\au)L_{\mathcal{C}},\Z_3)
  \longto \cent_{\aut(V_{L_\mathcal{C}})}({\au})
  \stackrel{\varphi}\longto \cent_{O(L_\mathcal{C})}(\au) \longto 1.
\]
It remains to show $[L_{\mathcal{C}}:(1-\au)L_{\mathcal{C}}]=3^n$.
Since $\au$ is fixed point free on $L_\mathcal{C}$,
$(1-\au)^2= 1-2\au+\au^2 =-3\au$ and we have the sequence
\[
  L_\mathcal{C}> (1-\au)L_\mathcal{C}> (1-\au)^2 L_\mathcal{C} =3L_\mathcal{C}
\]
and
\[
  [L_\mathcal{C}:(1-\au)L_\mathcal{C}] = [(1-\au)L_\mathcal{C}: 3L_\mathcal{C}].
\]
Since $L_{\mathcal{C}}$ has rank $2n$,
$3^{2n}=[L_\mathcal{C}: 3L_\mathcal{C}]=[ L_\mathcal{C}: (1-\au)L_\mathcal{C}]^2$.
Thus, $[L_\mathcal{C}: (1-\au)L_\mathcal{C}]=3^n$ as desired.
\end{proof}

\begin{lem}
Let $\mathcal{C}$ be an even $\F_4$-code.
For $\beta\in L_{\mathcal{C}}(4)$, set $A(\beta)=\Span\{\beta,\au\beta\}\simeq \sqrt{2}A_2$.
Let $E(\beta)$ be the set of extendable $c=4/5$ Virasoro vectors in 
$V_{A(\beta)}^{\hat{\au}}\subset V_{L_{\mathcal{C}}}^{\hat{\au}}$ 
and let $E$ be the union of $E(\beta)$, $\beta \in L_{\mathcal{C}}(4)$.
Let $G$ the subgroup generated by all $\sigma_u$, $u\in E$, and 
let 
$\varphi: \cent_{\aut(V_{L_\mathcal{C}})}(\au) \longto \cent_{O(L_\mathcal{C})}(\au)$
be the natural map.
Then we have $\varphi(G) =\la t_{A(\be)}\mid \be \in L_{\mathcal{C}}(4)\ra$.
\end{lem}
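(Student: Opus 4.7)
The plan is to adapt the argument of Proposition~\ref{weylpart} to the code setting. The essential step is to verify that $A(\beta)$ is RSSD in $L_\mathcal{C}$ for every $\beta \in L_\mathcal{C}(4)$; Lemma~\ref{phise} then supplies both the $\sigma$-type property (so that each $\sigma_u$ with $u \in E(\beta)$ is a well-defined automorphism centralizing $\hat{\au}$) and the identification $\varphi(\sigma_u) = t_{A(\beta)}$.

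For the RSSD property, I would work with the ambient Hermitian form $\la \cdot,\cdot\ra$ on $\mathcal{E}^n$. Because $L_\mathcal{C}$ is integral with respect to the real form $(u,v) = \mathrm{Re}\la u,v\ra$ and is closed under multiplication by $\zeta$, for every $\gamma,\beta \in L_\mathcal{C}$ and every $\delta \in \mathcal{E}$ we have $\mathrm{Re}(\bar\delta \la\gamma,\beta\ra) = \mathrm{Re}\la\gamma,\delta\beta\ra \in \Z$. Writing $\la\gamma,\beta\ra = m + n\zeta$ with $m,n\in\Z$ and taking $\delta = 1$ and then $\delta = \zeta$ forces $m, n \in 2\Z$, so $\la\gamma,\beta\ra \in 2\mathcal{E}$. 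The real orthogonal projection onto $\R A(\beta) = \C\beta$ is then $\pi_{A(\beta)}(\gamma) = (\la\gamma,\beta\ra/4)\beta = (m/4)\beta + (n/4)\au\beta$, which lies in $\tfrac{1}{2}A(\beta)$; hence $2L_\mathcal{C} \subset A(\beta) + \ann_{L_\mathcal{C}}(A(\beta))$, and $A(\beta)$ is RSSD.

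By Lemma~\ref{phise}, each $u \in E(\beta)$ is then extendable and of $\sigma$-type on $V_{L_\mathcal{C}}^{\hat{\au}}$, and $\varphi(\sigma_u) = t_{A(\beta)}$. Moreover, every $\beta \in L_\mathcal{C}(4)$ lies in $L^0 \cup L^1$ by construction, so by Lemma~\ref{eA} the vector $e_{A(\beta)} \in V_{A(\beta)}^{\hat{\au}}$ is a $c=4/5$ Virasoro vector; extendability follows from Lemma~\ref{c45} applied to $V_{A(\beta)}^{\hat{\au}} \cong V_{\sqrt{2}A_2}^{\hat{\au}}$. Thus $E(\beta) \neq \varnothing$ for every $\beta \in L_\mathcal{C}(4)$. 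Since $G$ is generated by $\{\sigma_u \mid u \in E\}$ with $E = \bigcup_{\beta \in L_\mathcal{C}(4)} E(\beta)$, the two inclusions combine to give $\varphi(G) = \la t_{A(\beta)} \mid \beta \in L_\mathcal{C}(4)\ra$.

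The main obstacle is the RSSD verification in the first step: it rests on the interplay between real integrality of $L_\mathcal{C}$ and its $\mathcal{E}$-module structure, which together force $\la L_\mathcal{C}, L_\mathcal{C}\ra \subset 2\mathcal{E}$. Once this is in place, the rest of the proof is a straightforward assembly of Lemma~\ref{phise}, Lemma~\ref{eA}, and Lemma~\ref{c45}.
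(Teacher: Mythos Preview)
Your proposal is correct and follows the same approach as the paper, which simply writes ``It follows from Lemma~\ref{phise}.'' You have supplied the one detail the paper leaves implicit, namely the RSSD verification for $A(\beta)\subset L_{\mathcal{C}}$ via the Eisenstein--Hermitian structure; this argument is sound and makes the appeal to Lemma~\ref{phise} fully justified.
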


\begin{proof}
It follows from Lemma \ref{phise}.
\end{proof}

\subsection{The orbifold VOA $V_{K_{12}}^{\hat{\au}}$}

Next we will show that there is another kind of $c=4/5$ Virasoro vectors in the
lattice VOA $V_{K_{12}}$ associated to the 12-dimensional Coxeter-Todd lattice
$K_{12}$.

First let us recall a construction of the Coxeter-Todd lattice using the Hexacode
over $\mathbb{F}_4$.
The Hexacode $\mathcal{H}$ is a length $6$ self-dual $\F_4$-code generated by
\[
\begin{bmatrix}
  0&0&1&1&1&1
  \\
  1&0&1&0&\zeta& \zeta^2
  \\
  1&0&0&1&\zeta^2& \zeta
\end{bmatrix}_.
\]
The minimal weight of $\mathcal{H}$ is $4$ and the Hamming weight
enumerator is
\[
x^6+45x^2y^4 + 18y^6.
\]

The Coxeter-Todd lattice can be defined as the sublattice
\begin{equation}\label{dK12}
  K_{12}= L_{\mathcal{H}}
  = \{ (x_1, \dots, x_6)\in \mathcal{E}^6 \mid
  (\eta x_1, \dots, \eta x_6)\in \mathcal{H}\}.
\end{equation}

The following facts about $K_{12}$ can be found in \cite{CS2}.

\begin{lem}\label{PK12}
  Let $K_{12}$ be the integral lattice $L_{\mathcal{H}}$ associated with the Hexacode $\mathcal{H}$.
  \\
  (1)~ The isometry group $O(K_{12})$ of $K_{12}$ has the shape
  $(6\times \mathrm{PSU}(4,3).2).2$.
  \\
  (2)~ $K_{12}$ has $756$ vectors of norm $4$, $4032$ vectors of norm $6$ and
  $20412$ vectors of norm $8$.
  Moreover, $O(K_{12})$ acts transitively on vectors of norm $4$, $6$ and $8$,
  respectively.
  \\
  (3)~ Let $\au$ be an isometry of $K_{12}$ defined by the complex multiplication 
  $\au v=\zeta v$ for any $v\in K_{12}$.
  Then $\au$ has order $3$ and is fixed point free on $K_{12}$.
  Moreover, $O_3(O(K_{12}))=\la \au \ra$.
\end{lem}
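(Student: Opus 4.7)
The plan is to cite the classical facts about $K_{12}$ from \cite{CS2} for parts (1) and (2), and then derive part (3) from the structure established in (1) together with a short direct verification. Since the paper is really assembling machinery, I would not reprove the isometry group structure from scratch but would indicate the source of each ingredient.

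For part (1), the isometry group is computed via the Hexacode construction. The lattice $K_{12}$ is naturally an $\mathcal{E}$-module of rank $6$, so its isometry group contains the unit group $\mathcal{E}^\times$ of order $6$ acting by scalars (a central factor), the coordinate-permutation and scaling automorphisms coming from $\aut(\mathcal{H})$ (which after taking the projective quotient yields a factor involving $\mathrm{PSU}(4,3).2$), and finally complex conjugation as an outer involution of order $2$. Assembling these pieces gives the shape $(6\times \mathrm{PSU}(4,3).2).2$, as worked out in detail in \cite{CS2}.

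For part (2), the counts follow from the weight enumerator $x^6+45x^2y^4+18y^6$ of $\mathcal{H}$: decompose $K_{12}$ by its image in $\F_4^6=\mathcal{E}^6/2\mathcal{E}^6$, and enumerate on each coset using the norms of Eisenstein integers. This yields $|K_{12}(4)|=756$, $|K_{12}(6)|=4032$, and $|K_{12}(8)|=20412$. Transitivity of $O(K_{12})$ on each norm set also appears in \cite{CS2}; one can double-check by matching orbit sizes against the order of $O(K_{12})$ from (1).

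For part (3), first note that multiplication by $\zeta$ preserves the Hermitian form on $\C^6$, hence also its real part, so $\au$ is indeed an isometry of $K_{12}$ (using that $\zeta K_{12}\subseteq K_{12}$ since $K_{12}$ is an $\mathcal{E}$-module). Because $\zeta^3=1$ and $\zeta\ne 1$, the order of $\au$ is exactly $3$; fixed-point-freeness is immediate from $\au v=v \Rightarrow (\zeta-1)v=0 \Rightarrow v=0$. For the final assertion $O_3(O(K_{12}))=\la\au\ra$, use (1): $\mathrm{PSU}(4,3)$ is nonabelian simple, so any normal $3$-subgroup of $O(K_{12})$ maps trivially to the $\mathrm{PSU}(4,3).2$ quotient, hence lies inside the central factor of order $6$; the unique Sylow $3$-subgroup of that cyclic $6$ is $\la\au\ra$. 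The main obstacle is really just part (1), which is a genuine calculation, but since it is standard in \cite{CS2} we rely on that reference; parts (2) and (3) are then essentially bookkeeping.
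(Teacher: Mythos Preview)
Your proposal is correct and matches the paper's approach: the paper simply states that all three parts can be found in \cite{CS2} and gives no further proof. Your sketch actually provides more detail than the paper does, particularly the explicit derivation of $O_3(O(K_{12}))=\la\au\ra$ from the group structure in (1), which the paper leaves implicit in the citation.
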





Next we study the extendable $c=4/5$ Virasoro vectors in $V_{K_{12}}$.
In addition to the Virasoro vector $\rho_x e_{A(\al)}$ defined in Lemma \ref{eA},
there exists another kind of extendable $c=4/5$ Virasoro vectors in $V_{K_{12}}$.

\subsubsection{Dihedral $\sqrt{2}E_8$ pairs and Coxeter Todd lattice}\label{app:A}

Next we will recall some results about dihedral $EE_8$ pairs from
\cite{GL}. 


\begin{nota}\label{Q}
Let $M$ and $N$ be $\sqrt{2}E_8$-sublattices of the Leech lattice $\Lambda$ such that
$Q:=M+N$ is isometric to $\dih{6}{14}$ as obtained in \cite{GL}.
Let $t_M$ and $t_N$ be the SSD involutions associated to $M$ and $N$,
respectively (see Definition \ref{tX}).
Define $F:=M\cap N$, $J:=\ann_{Q}(F)$, $S^1:= M\cap J$ and $S^2:= N\cap J$.
The following facts can be found in Section 6.1 of \cite{GL}.
\\
(1)~$F=M\cap N \cong \sqrt{2}A_2$;
\\
(2)~$S^1\cong S^2 \cong \sqrt{2}E_6$ and $S^1\cap S^2=0$;
\\
(3)~$J=\ann_{Q}(F)\cong K_{12}$, the Coxeter-Todd lattice of rank 12;
\\
(4) Set $\au:= t_Mt_N$. 
Then $\au$ has order $3$. 
Moreover, $\au$ is fixed point free on $J$ and is contained in $O_3(O(J))$.
\end{nota}

For explicit calculation, we will fix a 2-cocycle on $Q$.

\begin{nota}\label{cocycle}
  Let $\al, \be \in Q$. Then $\al=\alpha_1+\alpha_2$, $\be =\beta_1+\beta_2$ for some
  $\alpha_1$, $\alpha_2\in M$ and $\beta_1$, $\beta_2\in N$.
  We define
  \[
    \varepsilon(\al, \be) = (-1)^{(\alpha_2, \beta_1)}.
  \]
  It is straightforward to show that $\varepsilon(\ , \ )$ is a bilinear 2-cocycle
  satisfying
  \[
    \varepsilon(M,M)=\varepsilon(N,N)=1,~~~
    \varepsilon(\alpha,\beta)\varepsilon(\beta,\alpha)=(-1)^{(\alpha,\beta)}
    \quad  \text { for any } \al, \be \in Q.
  \]
\end{nota}

\begin{nota}\label{eM}
Let $L$ be an $\sqrt{2}E_8$-sublattice of $Q$ such that $\varepsilon|_{L\times L}$ is
trivial (e.g., $L=M$ or $N$).
Let
\[
  e_L=\frac{1}{16} \om_L +\frac{1}{32} \sum_{\al\in L(4)} \ee^\al,
\]
where $\om_L$ is the conformal element of the lattice sub VOA $V_L$. Then
$e_L$ is an Ising vector.
For $x\in L^*$, define a $\Z$-linear map $\varphi_x=(-1)^{x_{(0)}} \in \aut(V_L)$:
\begin{equation*}\label{phisubx}
\varphi_x(u\otimes \ee^\al) =(-1)^{(x,\al)} u\otimes \ee^\al \quad
\text{ for } u\in M(1)\text{ and } \al \in L.
\end{equation*}
Then $ \varphi_x e_L$ is also an Ising vector of $V_L$.
\end{nota}

Let $e_M\in V_M$ and $e_N\in V_N$ be the Ising vectors defined as in Notation \ref{eM}.
Then we have $(e_M|e_N)=13\cd 2^{-10}$ (see \cite{GL}).

\medskip

The following lemma can be found in \cite{SY} (see also \cite{GL,LYY2}).

\begin{lem}\label{uv}
  Let $U$ be the sub VOA generated by $e_M$ and $e_N$. 
  Then $U$ contains an extendable $c=4/5$ Virasoro vector $u$ and an extendable 
  $c=6/7$ Virasoro vector $v$ such that $u$ and $v$ are both fixed by 
  $\hat{\au} = \tau_{e_M}\tau_{e_N}$ and
  \[
    U\cong \left( L(\sfr{4}{5},0)\oplus L(\sfr{4}{5},3)\right) 
    \otimes \left( L(\sfr{6}{7},0)\oplus L(\sfr{6}{7},5)\right) 
    \oplus \l(L(\sfr{4}{5},\sfr{2}{3})\otimes L(\sfr{6}{7},\sfr{4}{3})\r)^{\oplus 2}
  \]
  as an $L(\sfr{4}{5},0)\otimes L(\sfr{6}{7},0)$-module. 
  Moreover, $\la \hat{\au} \ra= \la \xi_W\ra$, where $W\cong \W(4/5)$ is the 
  $W_3$-algebra generated by $u$ and its 3-primary vector.
\end{lem}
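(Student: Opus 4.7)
The plan is to invoke the classification of VOAs generated by two Ising vectors in order to identify the sub VOA $U$, and then verify the automorphism assertion from the explicit decomposition.

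\textbf{Step 1 (identification of $U$).} From \cite{GL} one has $(e_M\vert e_N)=13\cdot 2^{-10}$. This inner product places the pair $(e_M,e_N)$ in the so-called $6A$ case of Sakuma's theorem on Griess subalgebras generated by two Ising vectors. By the VOA-level refinement of Sakuma's theorem (this is precisely the content of \cite{SY}; see also \cite{LYY2}), the sub VOA $U=\la e_M,e_N\ra$ is therefore isomorphic to the $6A$ dihedral VOA, whose branching as a module over its canonical Virasoro pair $L(\sfr{4}{5},0)\tensor L(\sfr{6}{7},0)$ is exactly the formula stated in the lemma. This immediately exhibits the desired Virasoro vectors $u$ and $v$: take $u$ (resp. $v$) to be the conformal vector of the first (resp. second) tensor factor.

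\textbf{Step 2 (extendability).} From the explicit decomposition in the lemma, the first summand already contains a submodule isomorphic to $L(\sfr{4}{5},3)\tensor L(\sfr{6}{7},0)$ and one isomorphic to $L(\sfr{4}{5},0)\tensor L(\sfr{6}{7},5)$. Their highest weight vectors serve as the $3$-primary vector for $u$ and the $5$-primary vector for $v$, so $u$ and $v$ are extendable in the sense of Definition \ref{df:2.10}. In particular, $\vir(u)$ and its $3$-primary vector generate a subalgebra $W\cong \W(\sfr{4}{5})$ inside $U$.

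\textbf{Step 3 (fixing by $\hat{\au}$).} In the $6A$ dihedral VOA the Miyamoto involutions $\tau_{e_M}$ and $\tau_{e_N}$ lie in the dihedral group of order $12$ generated by them, and the central Virasoro vectors $u$ and $v$ are manifestly fixed by this whole dihedral group (they lie in the intersection of the two commutants, by construction). Hence $\hat{\au}=\tau_{e_M}\tau_{e_N}$ fixes both $u$ and $v$.

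\textbf{Step 4 (identification $\la\hat{\au}\ra=\la\xi_W\ra$).} By Theorem \ref{thm:2.12}, $\xi_W$ acts trivially on every $\W(\sfr{4}{5})$-submodule of type $W[0]$ or $W[\sfr{2}{5}]$ and by primitive cube roots of unity on submodules of type $W[\sfr{2}{3}]^{\pm}$ or $W[\sfr{1}{15}]^{\pm}$. Reading off the decomposition in the lemma, the first summand of $U$ is a $W[0]\tensor(\text{something})$ and the two $L(\sfr{4}{5},\sfr{2}{3})\tensor L(\sfr{6}{7},\sfr{4}{3})$ summands correspond to $W[\sfr{2}{3}]^\pm$. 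On the other hand, $\hat{\au}$ has order $3$ on $U$ (it is a product of two order $2$ Miyamoto involutions whose product has order $6$ in the full dihedral group but order $3$ on the fixed point subalgebra of the dihedral $\Z_2$-grading that contains $U$; alternatively, $\hat{\au}$ is the restriction of the order-$3$ lattice automorphism $t_Mt_N$ from Notation \ref{Q}), fixes the first summand, and necessarily acts as non-trivial cube roots of unity on the other two summands, because those summands are not contained in the fixed point subVOA of $\hat{\au}$. The two order-$3$ cyclic groups $\la\hat{\au}\ra$ and $\la\xi_W\ra$ thus act identically on each isotypic component, giving $\la\hat{\au}\ra=\la\xi_W\ra$.

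The main obstacle is Step 1: the uniqueness of the isomorphism class of the sub VOA generated by two Ising vectors with the given inner product. This is precisely the non-trivial VOA-level Sakuma theorem, which we use as a black box.
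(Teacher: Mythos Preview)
Your approach is essentially the same as the paper's: the paper does not give an independent proof of this lemma but simply cites \cite{SY} (with cross-references to \cite{GL,LYY2}), and you are likewise deferring to \cite{SY}. However, there is a terminology error that propagates through your argument: the inner product $(e_M|e_N)=13\cdot 2^{-10}$ corresponds to the $3A$ case in Sakuma's classification, not the $6A$ case (the $6A$ inner product is $5\cdot 2^{-10}$). This is consistent with the setup: $M+N\cong\dih{6}{14}$ means the dihedral group generated by $t_M$ and $t_N$ has order $6$, so $\au=t_Mt_N$ and hence $\hat{\au}=\tau_{e_M}\tau_{e_N}$ has order $3$, not $6$. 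The reference \cite{SY} is precisely the $3A$ case (two Miyamoto involutions generating $\mathrm{S}_3$), and the decomposition stated in the lemma is the $3A$ decomposition. Consequently your parenthetical in Step~4 about ``order $6$ in the full dihedral group but order $3$ on the fixed point subalgebra'' is incorrect and should be deleted; $\hat{\au}$ simply has order $3$ on $U$. Once you relabel $6A$ as $3A$ and remove the spurious order-$6$ remark, the argument is correct and coincides with the paper's citation.
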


\begin{rem}\label{defu}
The extendable $c=4/5$ Virasoro vector $u$ in Lemma \ref{uv} can be defined 
explicitly as follows (see \cite{SY}):
\[
  u = \frac{2^6}{135} \l( 2e_M +2e_N + \hat{\au}e_N -16(e_M)_{(1)}e_N\r) .
\]
\end{rem}

\begin{lem}\label{inK12}
The extendable $c=4/5$ Virasoro vector $u$ is contained in $V_J\cong V_{K_{12}}$.
\end{lem}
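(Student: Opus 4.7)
The plan is to use the characterization $V_J = \com_{V_Q}(M(1)_F)$, which follows from the orthogonality $\C Q = \C F \oplus \C J$ (giving $M(1)_Q = M(1)_F \otimes M(1)_J$) together with the identity $\alpha_{(0)}\ee^\gamma = (\alpha,\gamma)\ee^\gamma$: the zero-mode condition $\alpha_{(0)}v = 0$ for all $\alpha \in F$ forces the lattice support of $v$ to lie in $\ann_Q(F) = J$, while the positive-mode conditions kill any $F$-direction Heisenberg. Hence verifying $u \in V_J$ reduces to checking $\alpha_{(n)}u = 0$ for all $\alpha \in F$ and $n \geq 0$; since $\wt(u) = 2$, only $n \in \{0,1,2\}$ produce non-trivial constraints.

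To execute this I expand
$$u = \tfrac{2^6}{135}\bigl(2e_M + 2e_N + \hat{\au}e_N - 16(e_M)_{(1)}e_N\bigr)$$
using $e_L = \tfrac{1}{16}\om_L + \tfrac{1}{32}\sum_{\alpha \in L(4)}\ee^\alpha$. Since $M \cong \sqrt{2}E_8$ contains $F \oplus S^1 \cong \sqrt{2}(A_2 \oplus E_6)$ with index $3$, one has $\om_M = \om_F + \om_{S^1}$ and $M(4) = F(4) \sqcup S^1(4) \sqcup \mathrm{glue}_M(4)$ with sizes $6, 72, 162$; analogous statements hold for $N$ and for $L'' := \hat{\au}N$, with $S^2, S^3$ replacing $S^1$. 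Because $\hat{\au}|_F = \id$, the automorphism $\hat{\au}$ fixes $\om_F$ and each element of $F(4)$, but cyclically permutes $S^1, S^2, S^3$ and the corresponding glue sets.

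The crucial intermediate step is to compute the $F$-part of $(e_M)_{(1)}e_N$ by expanding into four pieces. The $\om_F$-component of $(\om_M)_{(1)}\om_N$ equals $2\om_F$ (the cross terms with $\om_{S^i}$ contribute only in the $J$-direction); the mixed sums $\sum_\beta(\om_M)_{(1)}\ee^\beta$ and $\sum_\alpha(\ee^\alpha)_{(1)}\om_N$ act on $\ee^\gamma$ by the $F$-weight $\tfrac{1}{2}|\gamma_F|^2 \in \{2, 0, 2/3\}$ according to $\gamma \in F(4), S^i(4), \mathrm{glue}(4)$; and the lattice--lattice sum $\sum_{\alpha, \beta}(\ee^\alpha)_{(1)}\ee^\beta$ is non-zero only for $(\alpha, \beta) \in \{-4, -2\}$, yielding respectively the bosonic $\tfrac{1}{2}\alpha_{(-2)}\vac + \tfrac{1}{2}\alpha_{(-1)}^2\vac$ (when $\beta = -\alpha \in F(4)$) and the lattice $\varepsilon(\alpha, \beta)\ee^{\alpha+\beta}$ (when $(\alpha, \beta) = -2$). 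Using $\sum_{\alpha \in F(4)}\alpha_{(-2)}\vac = 0$ from $\pm$-pairing and $\sum_{\alpha \in F(4)}\alpha_{(-1)}^2\vac = 24\om_F$, the $\om_F$-coefficient in $(e_M)_{(1)}e_N$ is $\tfrac{2+3}{256} = \tfrac{5}{256}$. Combined with the coefficient $\tfrac{5}{16}$ of $\om_F$ in $2e_M + 2e_N + \hat{\au}e_N$, the $\om_F$-direction cancels; the $F(4)$-lattice coefficients cancel similarly ($\tfrac{5}{32}$ versus $-16 \cdot \tfrac{5}{512}$), and the $\alpha_{(-2)}\vac, \alpha_{(-1)}\beta_{(-1)}\vac$ terms with $\alpha, \beta \in F$ vanish identically.

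The main obstacle is the ``glue'' cancellation: the direct contribution $\tfrac{1}{16}\sum_{\mathrm{glue}_L(4)}\ee^\gamma$ from each Ising vector must be exactly matched by the cross-term (factor $\tfrac{2}{3}$) and $(\alpha, \beta) = -2$ lattice--lattice contributions in $-16(e_M)_{(1)}e_N$. Verifying this requires the explicit $\Z/3$-glue structure of $E_8 \supset A_2 \oplus E_6$ and careful tracking of the $2$-cocycle $\varepsilon$ from Notation~\ref{cocycle}. Once $\alpha_{(0)}u = 0$ is established, the conditions $\alpha_{(1)}u = 0$ and $\alpha_{(2)}u = 0$ for $\alpha \in F$ follow by entirely parallel computations on the $F$-bosonic components, completing the proof that $u \in V_J \cong V_{K_{12}}$.
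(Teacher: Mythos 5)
Your overall strategy---expand $u$ inside $V_Q$ and check that every component outside $M(1)_{\C J}\otimes\C[J]$ cancels---is sound in principle (it is essentially the computation the paper carries out in Appendix~A to obtain the closed form of $u$), and your bookkeeping in the $\om_F$- and $F(4)$-directions is correct. But as written the argument has two genuine problems. First, the decisive computation, namely the cancellation of the $162$ glue exponentials $\ee^\gamma$ per lattice (those $\gamma\in M(4)$ or $N(4)$ with $\gamma_F\neq 0$), together with their cocycle signs, is exactly the part you defer (``the main obstacle \dots requires the explicit $\Z/3$-glue structure \dots''); those are precisely the terms that could obstruct $u\in V_J$, so until they are checked nothing is proved. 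Second, the one quantitative claim you do make about these terms is incorrect: you assert that the mixed term $(\om_M)_{(1)}\ee^\gamma$ contributes the ``$F$-weight'' $\tfrac12|\gamma_F|^2=2/3$ on glue vectors, whereas $(\om_M)_{(1)}\ee^\gamma=\tfrac12|P_{\C M}\gamma|^2\,\ee^\gamma$ with $P_{\C M}$ the orthogonal projection onto $\C M$; for $\gamma\in T^2(4)\subset N$ this weight is $1$, not $2/3$, because $\om_M=\om_F+\om_{S^1}$ and $S^1$ is \emph{not} orthogonal to $S^2$, so $(\om_{S^1})_{(1)}\ee^\gamma=\tfrac13\ee^\gamma$ also contributes. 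With the correct weights the glue coefficient is $\tfrac1{16}-\tfrac1{32}-\tfrac1{32}=0$, but with your value $2/3$ one gets $\tfrac1{16}-\tfrac1{48}-\tfrac1{32}=\tfrac1{96}\neq 0$; so executed as described, your cancellation would fail.

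You also miss the shortcut that constitutes the paper's actual proof. Since the invariant form on (the real form of) $V_Q$ is positive definite, $V_1=0$, and $u_{(1)}u=2u$, one has
\begin{equation*}
2(\om_F|u)=(u_{(1)}u\,|\,\om_F)=(u\,|\,(\om_F)_{(1)}u)=\sum_h h\,\|u_h\|^2\;\geq\;0,
\end{equation*}
where $u=\sum_h u_h$ is the eigendecomposition under $L^F(0)=(\om_F)_{(1)}$, whose eigenvalues $h=\tfrac12|\gamma_F|^2+k$ are nonnegative. Hence $u\in\Ker L^F(0)\cap V_2=\com_{V_Q}(M(1)_F)\cap V_2=V_J\cap V_2$ if and only if the single scalar $(\om_F|u)$ vanishes. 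That scalar is detected by exactly the two pieces you already computed correctly, $(\om_F\,|\,2e_M+2e_N+\hat{\au}e_N)=5/16$ and $(\om_F\,|\,(e_M)_{(1)}e_N)=5/2^8$, so the entire glue and cocycle analysis can be bypassed.
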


\begin{proof}
Recall that $F=M\cap N\cong \sqrt{2}A_2$.
Since $J=\ann_{Q}(F)$, it suffices to prove that $(\om_F|u)=0$.
Set $S^3=\au S^2$. 
We also denote $T^1= M\setminus (F\perp S^1)$, $T^2= M\setminus (F\perp S^2)$ 
and $T^3=\au T^2$. 
Then $M(4)=F(4)\cup S^1(4)\cup T^1(4)$ and $N(4)=F(4)\cup S^2(4)\cup T^2(4)$.
Since $F\perp S^1$ and $F\perp S^2$ are full sublattices of $M$ and $N$, 
we have $\w_M=\w_F +\w_{S^1}$, $\w_N=\w_F +\w_{S^2}$ and 
\[
\begin{split}
  e_M = \frac{1}{16}(\om_F+ \om_{S^1}) +\frac{1}{32}\left(
    \sum_{\al\in F(4)} \ee^\al + \sum_{\al\in S^1(4)} \ee^\al +\sum_{\al\in T^1(4)}
    \ee^\al \right),
  \\
  e_N = \frac{1}{16}(\om_F+ \om_{S^2}) +\frac{1}{32}\left(
    \sum_{\al\in F(4)} \ee^\al + \sum_{\al\in S^2(4)} \ee^\al
    +\sum_{\al\in T^2(4)} \ee^\al\right).
\end{split}
\]
Since $F\perp S^1$ is index three in $M$, there exist 
$\lambda\in \frac{1}{3}F$ and $\mu\in \frac{1}{3} S^1$
such that 
$$
  M=(F\perp S^1)\cup (\lambda+\mu+F\perp S^1)\cup(-\lambda -\mu +F\perp S^1).
$$
Moreover, one has $(\lambda,\lambda)=4/3$ and $(\mu,\mu)=8/3$ and
$$
  (\w_F)_{(1)} \ee^\alpha = \dfr{(\lambda,\lambda)}{2} \ee^\alpha = \dfr{2}{3} \ee^\alpha
  \mbox{~~for~~}  \alpha \in T^1(4).
$$
Then 
\[
\begin{array}{ll}
  ({\om_F}_{(1)}) e_M
  &= \dfrac{1}{16}\cd 2\om_F +\dfrac{1}{32}\left(2\dsum_{\al\in F(4)} \ee^\al 
     + \dfr{2}{3}\dsum_{\al\in T^1(4)} \ee^\al \right) 
  \vsb\\
  &= \dfrac{1}{8} \w_F+\dfr{1}{16}\dsum_{\al\in F(4)} \ee^\al 
    + \dfr{1}{48}\dsum_{\al\in T^1(4)} \ee^\al.
\end{array}
\]
Therefore,
\[
  (\om_F| (e_M)_{(1)}(e_N)) = \l((\om_F)_{(1)} e_M| e_N\r) 
  = \frac{1}{2^7} \cd 1 + \frac{1}{2^{10}} \cd 6 = \frac{5}{2^8}
\]
and hence
\[
  (\om _F| u) = \frac{2^6}{135}(\om_F |  2e_M +2e_N + \hat{\au}(e_N) -16(e_M)_{(1)}(e_N))
  = \frac{2^{6}}{135}\left( 5\cd\frac{1}{16} - 16\cd \frac{5}{2^8}\right) =0
\]
as desired.
\end{proof}

\begin{rem}
  Since $\la \hat{\au}\ra=\la \xi_W\ra$, it is clear that $u$ is of $\sigma$-type 
  in $V_{K_{12}}^{\hat{\au}}$. 
  In Appendix A, we will also give an explicit form of $u$ in $V_{K_{12}}^{\hat{\au}}$.
\end{rem}





\begin{lem}\label{3to6}
  We have $[K_{12}: (1-\au)K_{12}]=3^6$.
\end{lem}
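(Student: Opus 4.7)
The plan is to apply exactly the index-counting argument that was used in the preceding lemma for $L_{\mathcal{C}}$, specialized to the Hexacode $\mathcal{H}$ of length $6$. Since $K_{12}=L_{\mathcal{H}}$ by \eqref{dK12}, this identification alone is almost enough. Still, it is worth spelling out how the rank-$6$ exponent arises.

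First, I would use the defining property of $\au$, namely that it is the complex multiplication by the primitive cubic root of unity $\zeta$ and is fixed point free on $K_{12}$ by Lemma \ref{PK12}(3). Since $1+\zeta+\zeta^2=0$ as elements of $\mathcal{E}$, and $\au$ is fixed point free, one has $1+\au+\au^2=0$ on $K_{12}$. Consequently,
$$
  (1-\au)^2 = 1-2\au+\au^2 = -3\au
$$
as endomorphisms of $K_{12}$, so that $(1-\au)^2 K_{12}=3K_{12}$ because $\au$ is an isometry of $K_{12}$.

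Next, I would use this chain of sublattices
$$
  K_{12}\,\supset\,(1-\au)K_{12}\,\supset\,(1-\au)^2 K_{12}=3K_{12}
$$
together with the fact that $1-\au$ is injective (it is fixed point free), which implies $(1-\au)K_{12}\cong K_{12}$ as abelian groups. Multiplying by $1-\au$ is an index-preserving bijection between the pairs $(K_{12},(1-\au)K_{12})$ and $((1-\au)K_{12},(1-\au)^2K_{12})$, so
$$
  [K_{12}:(1-\au)K_{12}] = [(1-\au)K_{12}:3K_{12}].
$$
Combining the two indices and using $\rank K_{12}=12$, one gets
$$
  [K_{12}:(1-\au)K_{12}]^2 = [K_{12}:3K_{12}] = 3^{12},
$$
whence $[K_{12}:(1-\au)K_{12}]=3^6$.

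There is essentially no obstacle here: the computation is identical to the one appearing in the preceding lemma with $n=6$ (since $K_{12}=L_{\mathcal{H}}$ has complex rank $6$ over $\mathcal{E}$, hence real rank $12$). The only point worth double-checking is the identity $(1-\au)^2=-3\au$, which depends on $\au$ being genuinely fixed point free of order $3$ (so that $1+\au+\au^2$ annihilates $K_{12}$), and this is guaranteed by Lemma \ref{PK12}(3).
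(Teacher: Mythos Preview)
Your proof is correct and follows essentially the same argument as the paper: compute $(1-\au)^2=-3\au$ from the fixed-point-free order-$3$ condition, use the chain $K_{12}\supset (1-\au)K_{12}\supset 3K_{12}$ with equal successive indices, and conclude from $[K_{12}:3K_{12}]=3^{12}$. The only cosmetic difference is that you explicitly invoke injectivity of $1-\au$ to justify the equality of the two indices, which the paper leaves implicit.
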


\begin{proof}
Since $\au$ is fixed point free on $K_{12}$, we have $ (1-\au)^2=1-2\au+\au^2=-3\au$ 
as a linear map on $K_{12}$. 
Thus we have $(1-\au)^2K_{12}= 3K_{12}$. 
Therefore, we have the sequence
\[
  K_{12}> (1-\au)K_{12}> (1-\au)^2 K_{12} =3K_{12}
\]
and
\[
  [ K_{12}: (1-\au)K_{12}] = [(1-\au)K_{12}: 3K_{12}].
\]
Since $[K_{12}: 3K_{12}]= 3^{12}$, we have $[K_{12}: (1-\au)K_{12}]=3^6$ as desired.
\end{proof}

Let $f\in \hom(K_{12}/(1-\au)K_{12}, \Z_3)$. 
Then by Theorem \ref{centralizer}, the linear map $\rho_f$ defined by
\[
  \rho_f (u\otimes \ee^\al)
  = \zeta^{f(\al)} u\otimes \ee^\al \quad \text{ for } u\in M(1),~ \al\in K_{12},~
  \zeta =\exp(2\pii/3),
\]
induces an automorphism of $V_{K_{12}}^{\hat{\au}}$. 
Therefore, $\rho_f u$ is also an extendable $c=4/5$ Virasoro vector of $\sigma$-type 
in $V_{K_{12}}^{\hat{\au}}$.

\begin{lem}
  There are at least $1107$ extendable $c=4/5$ Virasoro vectors of $\sigma$-type in 
  $V_{K_{12}}^{\hat{\au}}$. 
  There are $3^6$ Virasoro vectors of the form $\rho_f u$, 
  $f\in \hom(K_{12}/(1-\au)K_{12},\Z_3)$ and $3\times 126$  Virasoro vectors 
  associated to the $126$ $\au$-invariant $\sqrt{2}A_2$-sublattice of $K_{12}$.
\end{lem}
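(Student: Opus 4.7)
The plan is to exhibit two disjoint families of extendable $c=4/5$ Virasoro vectors of $\sigma$-type in $V_{K_{12}}^{\hat{\au}}$, of sizes $3^6$ and $3\cdot 126$, and then add.

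For the first family, I would start from the specific Virasoro vector $u \in V_{K_{12}}^{\hat{\au}}$ of Lemma \ref{inK12}, which is extendable and of $\sigma$-type by Lemma \ref{uv} together with the fact $\la\hat{\au}\ra=\la\xi_W\ra$. By Theorem \ref{centralizer} applied to $L=K_{12}$ (using $K_{12}(2)=\varnothing$ from Lemma \ref{PK12}) and Lemma \ref{3to6}, the kernel $\hom(K_{12}/(1-\au)K_{12},\Z_3)$ of $\varphi:\cent_{\aut(V_{K_{12}})}(\hat\au)\to\cent_{O(K_{12})}(\au)$ has order $3^6$. Each $f$ in this group gives $\rho_f\in\aut(V_{K_{12}}^{\hat\au})$, so every $\rho_f u$ is again an extendable $c=4/5$ Virasoro vector of $\sigma$-type. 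The remaining task is to verify that the $3^6$ vectors $\rho_f u$ are pairwise distinct; for this one reads off the ``support'' $\Sigma\subset K_{12}$ of $u$ (i.e.\ the set of $\alpha$ with nonzero $\ee^\alpha$-coefficient in $u$, computed explicitly from $u=\frac{2^6}{135}(2e_M+2e_N+\hat\au e_N-16(e_M)_{(1)}e_N)$ and worked out in Appendix A) and checks that $\Sigma$ generates $K_{12}/(1-\au)K_{12}$, so that $\rho_{f-f'}u=u$ forces $f\equiv f'$.

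For the second family, note that by Lemma \ref{PK12}(2) the group $O(K_{12})$ acts transitively on the $756$ norm $4$ vectors. For any $\alpha\in K_{12}(4)$, the sublattice $A(\alpha)=\Span_\Z\{\alpha,\au\alpha\}$ is an $\au$-invariant copy of $\sqrt{2}A_2$, and its norm $4$ vectors are exactly $\{\pm\alpha,\pm\au\alpha,\pm\au^2\alpha\}$, of cardinality $6$; hence there are $756/6=126$ such sublattices. Because $\sqrt{2}A_2$ is SSD, every $A(\alpha)$ is RSSD in $K_{12}$, so Lemma \ref{phise} applies. Combined with Lemma \ref{c45}, each $V_{A(\alpha)}^{\hat{\au}}$ contains exactly three extendable $c=4/5$ Virasoro vectors $u^0_{A(\alpha)},u^1_{A(\alpha)},u^2_{A(\alpha)}$, all of $\sigma$-type in $V_{K_{12}}^{\hat{\au}}$, and $\varphi(\sigma_{u^i_{A(\alpha)}})=t_{A(\alpha)}$. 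Different sublattices give different images $t_{A(\alpha)}$ under $\varphi$, and within a given $A(\alpha)$ the three vectors $u^0,u^1,u^2$ are distinct by Lemma \ref{c45}, yielding $3\cdot 126=378$ distinct Virasoro vectors.

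To finish, I would check the two families are disjoint. Every vector in the second family lies in $V_{A(\alpha)}^{\hat\au}$ for some rank $2$ sublattice $A(\alpha)$, so its support in $K_{12}$ consists only of the six vectors $\pm\alpha,\pm\au\alpha,\pm\au^2\alpha$. In contrast, the support of $u$ (hence of each $\rho_f u$) inherits contributions from the full norm $4$ sets of the $\sqrt{2}E_8$-lattices $M$ and $N$ and from $(e_M)_{(1)}e_N$, so it is vastly larger than six elements; this rules out coincidences. Adding gives at least $3^6+3\cdot 126=729+378=1107$ extendable $c=4/5$ Virasoro vectors of $\sigma$-type in $V_{K_{12}}^{\hat\au}$, which is the claim. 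The main obstacle is the distinctness of the $\rho_f u$, since this requires the explicit description of $u$ promised in the appendix to be detailed enough to check that its support spans $K_{12}$ modulo $(1-\au)K_{12}$; everything else is a counting argument built on results already stated.
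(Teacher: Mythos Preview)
The paper states this lemma without proof, treating it as an immediate consequence of Lemma~\ref{3to6} and the paragraph preceding the lemma (which already observes that each $\rho_f u$ is an extendable $c=4/5$ Virasoro vector of $\sigma$-type in $V_{K_{12}}^{\hat\au}$). Your proposal supplies exactly the details the paper leaves implicit---the count $756/6=126$ of $\au$-invariant $\sqrt{2}A_2$-sublattices, the distinctness of the $\rho_f u$ via the support of $u$, and the disjointness of the two families---so your approach coincides with the paper's intended reasoning, only made explicit.

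One correction: your justification that ``$\sqrt{2}A_2$ is SSD, hence every $A(\alpha)$ is RSSD in $K_{12}$'' is wrong, because $\sqrt{2}A_2$ is \emph{not} SSD: one computes $2(\sqrt{2}A_2)^*=\sqrt{2}A_2^*$, and since $A_2^*\not\subset A_2$ this is not contained in $\sqrt{2}A_2$. The RSSD property of $A(\alpha)$ in $K_{12}$ is nevertheless true, but it needs the $\mathcal{E}$-lattice description of $K_{12}$: by the transitivity of $O(K_{12})$ on norm~$4$ vectors (Lemma~\ref{PK12}(2)) one may take $\alpha=(2,0,\dots,0)\in\mathcal{E}^6$, so $A(\alpha)=2\mathcal{E}e_1$, and then for any $\beta=(b_1,\dots,b_6)\in K_{12}$ the decomposition $2\beta=2b_1e_1+(0,2b_2,\dots,2b_6)$ has first summand in $A(\alpha)$ and second summand in $(2\mathcal{E})^6\cap e_1^\perp\subset\ann_{K_{12}}(A(\alpha))$. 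The paper itself also invokes Lemma~\ref{phise} later (in the lemma on $H$) without verifying RSSD, so this gap is shared; with the above check, your argument goes through.
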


\begin{nota}
  Let $\mathcal{A}$ be the set of all $\nu$-invariant $\sqrt{2}A_2$ sublattices in $K_{12}$. 
  Denote $E_1=\{ \rho_\al e_A\mid A\in \mathcal{A}, ~\al \in A(4)\}$ and 
  $E_2=\{ \rho_f u \mid f\in   \hom(K_{12}/(1-\au)K_{12}, \Z_3)\}$.
\end{nota}

\begin{rem}
The coset representatives of $(1-\au)K_{12}$ in $K_{12}$ have been computed in \cite[Page 427]{CS2}. 
Every coset can be represented by a vector of norm $0$, $4$, $6$, or $8$ and
\begin{center}
  the vectors of norm $0$ fall into $1$ class of size $1$; 
  \\
  the vectors of norm $4$ fall into $252$ classes of size $3$; 
  \\
  the vectors of norm $6$ fall into $224$ classes of size $18$; 
  \\
  the vectors of norm $8$ fall into $252$ classes of size $81$.
\end{center}
\end{rem}

\medskip

The next lemma can be obtained easily by using Equation \eqref{dK12} and (2) of 
Lemma \ref{PK12}.
\begin{lem}\label{orthal}
Let $\be\in K_{12}$ be of norm $4$, $6$, or $8$ and let
\[
  n_\be =| \{ \al\in K_{12}(4) | (\al, \be )=0\mod 3\}|.
\]
Then,
\[
n_\be =\begin{cases}
270  & \text{ if } (\be , \be)=4,\\
270 & \text{  if } (\be , \be)=6, \\
216 & \text{  if } (\be , \be)=8.
\end{cases}
\]
\end{lem}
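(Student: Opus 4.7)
The plan is to exploit the transitivity of $O(K_{12})$ on each norm shell $K_{12}(n)$, $n\in\{4,6,8\}$, established in Lemma \ref{PK12}(2). This reduces the problem to one explicit computation per norm, and each is carried out directly from the Hexacode description \eqref{dK12} of $K_{12}$.

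The common input is a classification of $K_{12}(4)$ via reduction modulo $2\mathcal{E}$. Of the $756$ norm-$4$ vectors, $36$ are supported in a single coordinate of the form $\pm 2u$ with $u\in\mathcal{E}^\times$ (lifts of the zero codeword of $\mathcal{H}$), and the remaining $720=45\cdot 16$ have exactly four non-zero coordinates each a unit of $\mathcal{E}$, lifting the $45$ weight-$4$ codewords of $\mathcal{H}$ with $2^4$ unit lifts per codeword. The inner product decomposes as $(\al,\be)=\sum_{i=1}^{6}\mathrm{Re}(\alpha_i\bar\beta_i)$, so its residue modulo $3$ can be read off coordinate by coordinate from the values $2\mathrm{Re}(u)\in\{\pm 1,\pm 2\}$ for units $u\in\mathcal{E}^\times$ and $2\mathrm{Re}(w)\in\{0,\pm 3\}$ for norm-$3$ elements $w\in(1-\zeta)\mathcal{E}$.

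For $(\be,\be)=4$ I take $\be=(2,0,\dots,0)\in K_{12}$, so $(\al,\be)=2\mathrm{Re}(\alpha_1)$. Among the possible values of $\alpha_1$ appearing in a norm-$4$ vector, this is $\equiv 0\pmod 3$ only when $\alpha_1=0$. Counting gives $30$ contributions from the first family (the single non-zero coordinate lying in positions $2$--$6$) and, by the transitivity of $\aut(\mathcal{H})$ on the six coordinates, exactly $45\cdot 2/6=15$ weight-$4$ codewords of $\mathcal{H}$ have $c_1=0$, each contributing $16$ lifts. Summing yields $n_\be=30+240=270$. For $(\be,\be)=6$ and $(\be,\be)=8$ I would choose representatives compatible with the hexacode structure, such as a lift of a weight-$4$ codeword of type $3+1+1+1+0+0$ for norm $6$, and $\be=(2,2,0,\dots,0)\in K_{12}$ for norm $8$. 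In each case the same two-family split of $K_{12}(4)$ applies, with the residue $(\al,\be)\pmod 3$ computed coordinate by coordinate over the support of $\be$ and summed.

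The main obstacle will be the combinatorial bookkeeping in the norm-$6$ and norm-$8$ cases, where $\be$ has several non-zero coordinates and one must enumerate, for each weight-$4$ codeword of $\mathcal{H}$, its intersection pattern with the support of $\be$ together with the compatible unit (or norm-$3$) lifts whose local contributions cancel modulo $3$. The combinatorics is rigidly constrained by the MDS property of $\mathcal{H}$ (dimension $3$, minimum weight $4$), which determines the number of codewords exhibiting any prescribed support profile on a small coordinate set. Once these combinatorial data are tabulated, the totals $270$ and $216$ fall out.
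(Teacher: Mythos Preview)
Your approach is correct and is exactly what the paper indicates: the paper gives no detailed argument at all, merely stating that the lemma ``can be obtained easily by using Equation \eqref{dK12} and (2) of Lemma \ref{PK12}'', i.e., transitivity on each norm shell together with the Hexacode description of $K_{12}$. You have spelled this out in considerably more detail than the paper does, and your norm-$4$ computation ($30+240=270$) is correct; the norm-$6$ and norm-$8$ cases go through by the same two-family split just as you outline.
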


\begin{cor}\label{orthA}
Let $\al\in K_{12}(4)$. Then $A(\al)=\Span\{ \al, \au(\al)\} \cong \sqrt{2}A_2$  and there exists exactly $80$ $\au$-invariant
$\sqrt{2}A_2$-sublattices $B\in \mathcal{A}$ such that $B\neq A(\al)$ and $B\not\perp A(\al)$.
\end{cor}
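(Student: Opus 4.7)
The plan is to begin from the total count $|\mathcal{A}| = 126$ and subtract the one sublattice $A(\al)$ itself together with the number of $B \in \mathcal{A}$ satisfying $B \perp A(\al)$; I expect the latter to equal $45$, giving $126 - 1 - 45 = 80$.

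First I would translate the orthogonality condition into a vector count: any $B \in \mathcal{A}$ with $B \perp A(\al)$ has the form $B = A(\be)$ for some $\be \in K_{12}(4) \cap \ann_{K_{12}}(A(\al))$, and each such $B \cong \sqrt{2}A_2$ contains exactly six norm-$4$ vectors $\{\pm \au^i \be : i = 0,1,2\}$ forming a single free $\la \au, -1\ra$-orbit. Hence the number of such $B$ equals $|K_{12}(4) \cap \ann_{K_{12}}(A(\al))|/6$, and it suffices to show that this intersection has cardinality $270$.

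The central step is to identify this intersection with the set $S := \{\be \in K_{12}(4) : (\al, \be) \equiv 0 \pmod{3}\}$, whose cardinality is $270$ by Lemma \ref{orthal}. Two ingredients drive the identification. First, since $K_{12}$ has minimum norm $4$, both $\al \pm \be$ have norm in $\{0\} \cup [4, \infty)$; this pins $(\al, \be) \in \{0, \pm 1, \pm 2, \pm 4\}$, and since none of $\pm 1, \pm 2, \pm 4$ is divisible by $3$, the congruence $(\al, \be) \equiv 0 \pmod{3}$ is equivalent to $(\al, \be) = 0$. Second, one must upgrade $(\al, \be) = 0$ to $(\au\al, \be) = 0$ as well. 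Here I would invoke the $\mathcal{E}$-lattice structure: because $K_{12}$ arises from the Hermitian self-dual Hexacode $\mathcal{H}$, the $\F_4$-valued Hermitian pairing of $\eta \al$ and $\eta \be$ vanishes for all $\al, \be \in K_{12}$, hence $\la K_{12}, K_{12}\ra \subset 2\mathcal{E}$. Writing $\la \al, \be\ra = 2(u + v\zeta)$ with $u, v \in \Z$, a short direct computation yields $(\au\al, \be) - (\al, \be) = -3u \in 3\Z$, and the norm-$4$ restriction above then forces $(\au\al, \be) = 0$.

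Putting these together, the $270$ vectors of $S$ are exactly the norm-$4$ vectors lying in $\ann_{K_{12}}(A(\al))$; dividing by $6$ gives $45$ sublattices $B \in \mathcal{A}$ with $B \perp A(\al)$, whence the desired count is $126 - 1 - 45 = 80$. The main obstacle is the congruence $(\au\al, \be) \equiv (\al, \be) \pmod{3}$: once the inclusion $\la K_{12}, K_{12}\ra \subset 2\mathcal{E}$ (coming from Hermitian self-duality of the Hexacode) is in hand, the congruence is a direct computation, but it is precisely this $\mathcal{E}$-lattice viewpoint that makes the link with Lemma \ref{orthal} possible.
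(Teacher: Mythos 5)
Your proof is correct and follows essentially the same route as the paper: both rest on the count $270$ from Lemma~\ref{orthal}, the congruence $(\au\al,\be)\equiv(\al,\be)\pmod 3$ (which you derive from $\la K_{12},K_{12}\ra\subset 2\mathcal{E}$, while the paper cites $(1-\au)K_{12}$ pairing into $3\Z$), and division by the six minimal vectors per $\au$-invariant $\sqrt{2}A_2$. Your arithmetic $126-1-270/6=80$ is just a reorganization of the paper's $(756-270-6)/6=80$, with the added (welcome) explicit observation that for norm-$4$ vectors divisibility by $3$ of the inner product forces actual orthogonality.
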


\begin{proof}
The first assertion is clear by definition. By Lemma \ref{orthal}, there 
exists 270 norm 4 vectors which are orthogonal to $A(\al)$. 
Note also that $(\al, \be )\equiv (\al, \au\be )\mod 3$ since 
$\frac{1}{3}(1-\au)K_{12} < K_{12}$ (see the proof of Lemma\ref{3to6}).
Thus there exists exactly $480$ norm 4 vectors which are not orthogonal 
to $A(\al)$ and not in $A(\al)$. 
Therefore we have$480/6=80$ $\au$-invariant $\sqrt{2}A_2$-sublattices 
$B\in \mathcal{A}$ such that $B\neq A(\al)$ and $B\not\perp A(\al)$.
\end{proof}

\begin{nota}
Let $(V, Q )$ be a nondegenerate orthogonal space over $\F_3$ and $\dim V=n$.

Up to equivalence, there are exactly two choices for the form $Q$ on $V$. 
These two types are distinguished by their discriminant (the determinant 
of their Gram matrix) $\delta = \pm 1$, or, in even dimension $n=2k$, by 
their sign $\epsilon = \pm  1$, where $\epsilon = + 1$ if $Q$ has Witt 
index $n$ and $\epsilon = - 1$ if $Q$ has Witt index $n - 1$. 
Note that $\delta\epsilon=(-1)^{n/2}$ if $\dim V=n$ is even.  
In odd dimension, both forms have the same Witt index, and the sign is not 
often defined. 
As a convention,  we choose  $\epsilon$ so that 
$\delta\epsilon = (-1)^{\frac{n+1} 2}$ if $\dim V=n$ is odd.

We denote the full orthogonal group of $(V, Q)$ by 
${}_\delta \mathrm{O}^\epsilon(n,3)$ if $(V,Q)$ has the discriminant 
$\delta$ and the sign $\epsilon$. 
Since $\delta$  and $\epsilon$ determine each other uniquely for any given $n$, 
we sometimes write $\mathrm{O}^\epsilon(n,3)$ instead of 
${}_\delta \mathrm{O}^\epsilon(n, 3)$. 
We also denote the derived subgroup 
$[{}_\delta \mathrm{O}^\epsilon(n,3), {}_\delta \mathrm{O}^\epsilon(n,3)]$ 
by $\Omega^\epsilon(n,3)$. 
Moreover, we often use $\pm$ to denote $\pm 1$.

The orthogonal group $\mathrm{O}^\epsilon(n,3)$ contains two conjugacy classes of
reflections, one containing those reflections $r_v$ whose reflection center
$\la v\ra$ has $Q(v ) = 1$ and the other containing those reflections 
$r_v$ whose center has $Q(v)=-1$. 
The reflections of the first class is said to be of $+$-type and those of 
the second class is of $-$ -type.
We denote the subgroup of $\mathrm{O}^\epsilon(n,3)$ generated by all reflections 
of $\gamma$-type by ${}^\gamma\Omega^\epsilon(n,3)$ and denote 
the central quotient 
${}^\gamma\Omega^\epsilon(n,3)/ ({}^\gamma\Omega^\epsilon(n,3)\cap \la \pm 1\ra)$ 
by ${}^\gamma \mathrm{P}\Omega^\epsilon (n,3)$.
\end{nota}

\begin{lem}
  Let $H=\la \sigma_e\mid e\in E_1\ra < \aut(V_{K_{12}})$. 
  Then $H\cong 3^6{:} (6\times \mathrm{PSU}(4,3).2)$.
\end{lem}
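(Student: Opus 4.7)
My plan is to place $H$ inside the centralizer exact sequence of Theorem \ref{centralizer} applied to $L=K_{12}$ and $\hat\au$. Since $K_{12}(2)=\varnothing$ and $[K_{12}:(1-\au)K_{12}]=3^6$ by Lemma \ref{3to6}, this sequence reads
\[
1 \longto \hom(K_{12}/(1-\au)K_{12},\Z_3) \longto \cent_{\aut(V_{K_{12}})}(\hat\au) \stackrel{\varphi}\longto \cent_{O(K_{12})}(\au) \longto 1
\]
with kernel of order $3^6$. Using Lemma \ref{PK12}, the outer involution of $O(K_{12})=(6\times\mathrm{PSU}(4,3).2).2$ comes from complex conjugation and inverts $\au$, so $\cent_{O(K_{12})}(\au)=6\times\mathrm{PSU}(4,3).2$. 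Every $e\in E_1$ is fixed by $\hat\au$, hence each $\sigma_e$ centralizes $\hat\au$ and $H\le\cent_{\aut(V_{K_{12}})}(\hat\au)$. It therefore suffices to compute $\varphi(H)$ and $\Ker \varphi|_H$.

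For $\varphi(H)$, Lemma \ref{phise} gives $\varphi(\sigma_{\rho_\al e_A})=t_A$ for every $A\in\mathcal{A}$ and $\al\in A(4)$, so $\varphi(H)=\la t_A\mid A\in\mathcal{A}\ra$. These $126$ SSD involutions form a $3$-transposition subset of $\cent_{O(K_{12})}(\au)$, inherited from the theory of Section \ref{sec:3.2.3}. To identify this with the entire centralizer I would verify: (a)~a product of pairwise commuting $t_A$'s ranging over an orthogonal family of $\sqrt{2}A_2$-sublattices spanning $K_{12}$ yields $-1$; (b)~a product $t_A t_B$ for a well-chosen non-commuting pair yields the central element $\au$; and (c)~the $3$-transposition group modulo $\la\au,-1\ra$ is exactly $\mathrm{PSU}(4,3).2$, recognized from its $126$ Fischer transpositions using Lemmas \ref{PK12} and \ref{orthA} (transitivity on norm-$4$ vectors and the orbit counts of non-commuting pairs).

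For $\Ker \varphi|_H$, I would apply the Corollary after Lemma \ref{a2group} to each $A\in\mathcal{A}$: the three extendable $c=4/5$ vectors $u^0_A,u^1_A,u^2_A$ in $V_A^{\hat\au}$ satisfy $\la\sigma_{u^0_A},\sigma_{u^1_A}\ra\cong \mathrm{S}_3\le H$, whose normal $\Z_3$ is generated by $\rho_\al=\sigma_{u^0_A}\sigma_{u^1_A}$ for $\al\in A(4)$. Hence $\rho_\al\in H$ for every $\al\in K_{12}(4)=\bigcup_{A\in\mathcal{A}}A(4)$. Since $\al\mapsto\rho_\al$ is a group homomorphism into $\hom(K_{12}/(1-\au)K_{12},\Z_3)$ and $K_{12}$ is spanned as a $\Z$-module by its norm-$4$ vectors, the resulting subgroup realizes the full image of the natural map $K_{12}\to \hom(K_{12}/(1-\au)K_{12},\Z_3)\cong 3^6$, which is surjective as already computed in the proof of Theorem \ref{centralizer}.

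Combining the two steps gives $1\to 3^6\to H\to 6\times\mathrm{PSU}(4,3).2\to 1$; a complement is furnished by $\cent_{O(\hat K_{12})}(\hat\au)\cap H$ via the natural lift from Theorem \ref{aut}, producing the split extension $H\cong 3^6{:}(6\times\mathrm{PSU}(4,3).2)$. The main obstacle is part (c) of the image computation: showing that the $126$ SSD reflections $t_A$ fill out the full $\mathrm{PSU}(4,3).2$ quotient rather than a proper subgroup. This is a Fischer-style recognition, and its verification relies on the explicit Hexacode realization of $K_{12}$ from \eqref{dK12} together with Lemmas \ref{PK12} and \ref{orthA}.
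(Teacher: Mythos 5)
Your proposal follows essentially the same route as the paper: place $H$ in the exact sequence of Theorem \ref{centralizer}, show $\varphi(H)=\la t_A\mid A\in\mathcal{A}\ra$ via Lemma \ref{phise}, and recover the $3^6$ kernel from products $\sigma_{\rho_\al e_A}\sigma_{e_A}$ (your derivation of $\rho_\al\in H$ through the $\mathrm{S}_3$ inside each $V_A^{\hat\au}$ is the same computation). The step you flag as the main obstacle --- that the $126$ involutions $t_A$ generate all of $\cent_{O(K_{12})}(\au)\cong 6\times\mathrm{PSU}(4,3).2$ --- is exactly the point the paper disposes of by citing the known structure of $O(K_{12})$ from Conway--Sloane rather than by a Fischer-style recognition, so your plan is sound and no more incomplete than the published argument.
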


\begin{proof}
First we note that $\rho_\al e_A$ is of $\sigma$-type in $V_{K_{12}}$ and fixed by $\hat{\au}$. 
Thus, $\sigma_e\in C_{\aut(V_{K_{12}})}(\hat{\au})$ for any $e\in E_1$. 
By Lemma \ref{centralizer}, we have an exact sequence
\[
  1 \longto \hom(K_{12}/(1-\au)K_{12}, \Z_3) 
  \longto  C_{\aut(V_{K_{12}})}(\hat{\au}) 
  \xrightarrow{~\varphi~} C_{O(K_{12})}(\au) 
  \longto 1.
\]
Note also $C_{O(K_{12})}(\au)$ has the shape $6\times \mathrm{PSU}(4,3).2$ (see \cite{CS2}).
By Lemma \ref{phise}, we have $\varphi(\sigma_{e})= t_A$ for any $e\in V_A^{\hat{\au}}, A\in \mathcal{A}$. Thus,
$$
  \varphi(H) =\la t_A\mid  A\in \mathcal{A}\ra \cong 6\times \mathrm{PSU}(4,3).2.
$$
Therefore, the map $\varphi: H \to C_{O(K_{12})}(\au)$ is a surjection.
Moreover,
$\sigma_{\rho_\al e_A} \sigma_{e_A} = \rho_\al^2$ and hence the kernel of 
$\varphi$ contains the subgroup
$$
  \la \rho_\al \mid \al\in K_{12}(4)\ra \cong   \hom(K_{12}/(1-\au)K_{12}, \Z_3) \cong 3^6.
$$
Thus, $H$ has the desired shape.
\end{proof}

\begin{rem}
We should note that $H$ also acts on $V_{K_{12}}^{\hat{\au}}$  with the kernel $\la \hat{\au}\ra $. Hence, as a subgroup of
$\aut(V_{K_{12}}^{\hat{\au}})$, we have
\[
  \bar{H}=  \la\sigma_e|_{_{\aut(V_{K_{12}})}} \mid e\in E_1\ra \cong H/\la
  \hat{\au}\ra \cong  3^6{:}(2\times \mathrm{PSU}(4,3).2) \cong 3^6{:} {^+\Omega^-}(6,3).
\]
\end{rem}

\begin{lem}
Let $\be \in K_{12}$ and $A\in \mathcal{A}$. Then
\[
  (\rho_\be u|e_A) =
  \begin{cases}
    ~0 & \text{ if } (\be , A) \equiv 0 \mod 3, 
    \vsb\\
    \dfrac{1}{25} & \text{ if } (\be , A) \not\equiv 0 \mod 3.
  \end{cases}
\]
\end{lem}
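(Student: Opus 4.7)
The approach is to compute $(\rho_\beta u | e_A)$ directly using the explicit forms of both Virasoro vectors and exploiting how $\rho_\beta$ modifies exponential elements by cube roots of unity. First, I would reduce the problem: both $\rho_\beta u$ and $e_A$ are distinct simple extendable $c=4/5$ Virasoro vectors of $\sigma$-type in $V_{K_{12}}^{\hat\au}$ (the ``vectors of the form $\rho_f u$'' and ``Virasoro vectors coming from $\sqrt{2}A_2$-sublattices'' belong to disjoint families of extendable Virasoro vectors), so Theorem \ref{thm:3.4} forces $(\rho_\beta u | e_A)\in\{0,\,1/25\}$. It therefore suffices to identify which of the two values occurs.

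Next, I would use the explicit description of $u$ promised in Appendix A: substituting the definitions of $e_M$ and $e_N$ into the formula of Remark \ref{defu} and using $J=\ann_Q(F)\cong K_{12}$ yields an expression
\[
  u = c_0\,\w_{K_{12}} + c_1\sum_{\gamma\in\Gamma}\ee^{\gamma}
\]
for explicit constants $c_0,\,c_1$ and an explicit $\hat\au$-invariant subset $\Gamma\subset K_{12}(4)$, together with Lemma \ref{eA} giving $e_A=\tfrac{2}{5}\w_A+(-1)^i\tfrac{1}{5}\sum_{\delta\in O(\al)}\ee^{\delta}$ where $O(\al)=\{\pm\al,\pm\au\al,\pm\au^2\al\}$. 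Using bilinearity of $(\cd|\cd)$, the orthogonality of the Heisenberg and group-algebra parts, the identity $(\ee^\gamma|\ee^\delta)=\varepsilon(\gamma,-\gamma)\delta_{\gamma+\delta,0}$, and the fact that $\rho_\beta$ multiplies $\ee^\gamma$ by $\zeta^{(\beta,\gamma)}$ while fixing the Heisenberg part, one obtains
\[
  (\rho_\beta u\,|\,e_A)
  = \dfr{2c_0}{5}(\w_{K_{12}}|\w_A)
  + \dfr{(-1)^i c_1}{5}\sum_{\gamma\in\Gamma\cap O(\al)}\varepsilon(\gamma,-\gamma)\,\zeta^{(\beta,\gamma)}.
\]
The $\w$-contribution is independent of $\beta$, while the second sum varies with $\beta$ through the character $\zeta^{(\beta,\cd)}$.

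Finally, I would case-split. When $(\beta,A)\equiv 0\pmod 3$, every exponent $\gamma\in A$ satisfies $\zeta^{(\beta,\gamma)}=1$, so $\rho_\beta$ fixes $V_A$ pointwise and in particular $\rho_\beta e_A=e_A$; invariance of the form under the automorphism $\rho_\beta$ then gives $(\rho_\beta u|e_A)=(u|e_A)$, and specializing at $\beta=0$ shows this must equal the value claimed. The substance is therefore to verify the cancellation $(u|e_A)=0$, i.e.\ $\frac{2c_0}{5}(\w_{K_{12}}|\w_A)+\frac{(-1)^i c_1}{5}\sum_{\gamma\in\Gamma\cap O(\al)}\varepsilon(\gamma,-\gamma)=0$, which is the same type of cancellation already used in Lemma \ref{inK12} to show $(\w_F|u)=0$. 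For the remaining case $(\beta,A)\not\equiv 0\pmod 3$, some of the phases $\zeta^{(\beta,\gamma)}$ are nontrivial cube roots of unity and the value cannot remain at $0$; by the Theorem \ref{thm:3.4} dichotomy it is forced to be $1/25$. The hard step is the first: producing the explicit $\Gamma$ and constants $c_0,c_1$ from Appendix A and carrying out the cancellation $(u|e_A)=0$, which requires identifying exactly which norm-$4$ vectors of $K_{12}$ survive in the support of $u$ and intersecting this set with the $\hat\au$-orbit $O(\al)$.
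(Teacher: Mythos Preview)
Your overall strategy---reduce via Theorem~\ref{thm:3.4} to the dichotomy $\{0,1/25\}$, then compute using the explicit form of $u$ from Appendix~A---is reasonable, but there is a genuine gap in your treatment of the case $(\beta,A)\not\equiv 0\pmod 3$. You assert that ``some of the phases $\zeta^{(\beta,\gamma)}$ are nontrivial cube roots of unity and the value cannot remain at $0$,'' but this is not an argument: a priori the modified character sum could still vanish (e.g.\ $1+\zeta+\zeta^2=0$), so the dichotomy alone does not force the answer to $1/25$. You would need an actual computation or a separate nonvanishing argument here, and you give neither.

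The paper avoids this difficulty entirely by using a different packaging of $u$ from Appendix~A. Rather than the raw exponential expansion you describe (which, incidentally, carries signs depending on whether $\alpha$ lies in $S^1,S^2,S^3$ or $W$, not a single set $\Gamma$ with a single coefficient $c_1$), the paper uses
\[
  u=\omega_{K_{12}}-\frac{1}{9}\sum_{B\in\mathcal{A}}e_B,
\]
which expresses $u$ as a combination of the conformal vector and the $126$ Virasoro vectors $e_B$. Since $\rho_\beta$ fixes $\omega_{K_{12}}$ and sends each $e_B$ to another of the three Virasoro vectors in $V_B^{\hat\nu}$, and since for $B\neq A$ the exponential supports of $e_B$ and $e_A$ are disjoint (so $(\rho_\beta e_B\,|\,e_A)=(e_B\,|\,e_A)$), the whole computation reduces to
\[
  (\rho_\beta u\,|\,e_A)=\frac{2}{5}-\frac{1}{9}\Bigl((\rho_\beta e_A\,|\,e_A)+\frac{80}{25}\Bigr),
\]
using Corollary~\ref{orthA}. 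Now $(\rho_\beta e_A\,|\,e_A)$ equals $2/5$ or $1/25$ according as $(\beta,A)\equiv 0$ or $\not\equiv 0\pmod 3$, and both values drop out of a two-line arithmetic check. This handles both cases uniformly and symmetrically, with no appeal to the $\{0,1/25\}$ dichotomy and no unjustified nonvanishing claim.
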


\begin{proof}
By Appendix A,
\[
\begin{split}
  (\rho_\be u\,|\,e_A) & = \l(\om_{K_{12}} - \frac{1}9 \sum_{B\in \mathcal{A}} 
  \rho_\be e_B \,\Bigg|\, e_A\r) 
  = \frac{2}5 -\frac{1}9\left( (\rho_\be e_A\,|\,e_A)+\frac{80}{25}\right).
\end{split}
\]
Note that for any $A\in \mathcal{A}$, there exists exactly $80$ $\au$-invariant $\sqrt{2}A_2$-sublattices $B\in \mathcal{A}$
such that $A\neq B$ and $A\not\perp B$.  Thus,
\[
  (\rho_\be u\,|\,e_A) =
\begin{cases}
  \dfrac{2}{5} -\dfrac{1}{9}\left( \dfrac{2}{5} +\dfrac{80}{25}\right)=0 
  & \text{ if }  (\be , A) \equiv 0 \mod 3, 
  \vsb\\
  \dfrac{2}{5} -\dfrac{1}{9}\left( \dfrac{1}{25} +\dfrac{80}{25}\right)
  = \dfrac{1}{25} & \text{ if } (\be , A) \not\equiv 0 \mod 3,
\end{cases}
\]
as desired.
\end{proof}

\begin{lem}
Let $\be\in K_{12}$ be a vector of norm $4$, $6$, or $8$. Then we have
\[
  (\rho_\be u|u) =
  \begin{cases}
    \dfrac{1}{25} & \text{ if $\be$ has norm $4$ or $6$},
    \vsb\\
    0 & \text{ if $\be$ has norm $8$}.
  \end{cases}
\]
\end{lem}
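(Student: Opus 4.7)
The plan is to compute $(\rho_\beta u\,|\,u)$ by using the explicit formula $u = \omega_{K_{12}} - \tfrac{1}{9}\sum_{B \in \mathcal{A}} e_B$ given in Appendix A, which is exactly the expression already used in the proof of the preceding lemma. Expanding the inner product linearly yields
\[
  (\rho_\beta u\,|\,u)
  = (\rho_\beta u\,|\,\omega_{K_{12}}) - \frac{1}{9}\sum_{A \in \mathcal{A}}(\rho_\beta u\,|\,e_A).
\]
Since $\rho_\beta \in \aut(V_{K_{12}}^{\hat{\au}})$ preserves both the invariant bilinear form and the conformal vector $\omega_{K_{12}}$, the first term reduces to $(u\,|\,\omega_{K_{12}})$, which equals $c_u/2 = 2/5$ by the standard identity $(e\,|\,\omega_V) = c_e/2$ for a Virasoro vector $e$ in an OZ-type VOA.

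Next, I would invoke the previous lemma to replace each term $(\rho_\beta u\,|\,e_A)$ by $0$ or $1/25$ according as $(\beta,A) \equiv 0 \pmod 3$ or not. Setting
\[
  N_\beta := |\{A \in \mathcal{A} \mid (\beta, A) \not\equiv 0 \pmod 3\}|,
\]
this yields $(\rho_\beta u\,|\,u) = \tfrac{2}{5} - \tfrac{N_\beta}{225}$, reducing the proof to the combinatorial computation of $N_\beta$ in each of the three cases.

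To count $N_\beta$, note that each $A \in \mathcal{A}$ contains precisely six norm-$4$ vectors $\{\pm\alpha, \pm\au\alpha, \pm\au^2\alpha\}$, and as recorded in the proof of Corollary \ref{orthA}, one has $(\beta, \alpha) \equiv (\beta, \au\alpha) \pmod 3$; hence $(\beta, A) \not\equiv 0 \pmod 3$ if and only if $(\beta, \alpha) \not\equiv 0 \pmod 3$ for one (equivalently every) such $\alpha$. Consequently $N_\beta = (756 - n_\beta)/6$, where $n_\beta$ is the quantity computed in Lemma \ref{orthal}. Substituting $n_\beta = 270, 270, 216$ for $(\beta,\beta) = 4, 6, 8$ gives $N_\beta = 81, 81, 90$, whence $(\rho_\beta u\,|\,u) = \tfrac{1}{25}, \tfrac{1}{25}, 0$, matching the statement.

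I anticipate no serious obstacle beyond bookkeeping; the argument is a clean combination of the preceding lemma, the counting of Lemma \ref{orthal}, and the explicit formula for $u$. The only slightly subtle ingredient is the identity $(u\,|\,\omega_{K_{12}}) = c_u/2$, which can be verified directly by applying the bracket $[L(2), u_{(1)}] = 2\,u_{(3)}$ to $u$: since $L(2)u \in V_0 = \C\vacuum$ and $u_{(1)}\vacuum = 0$ in an OZ-type VOA, one obtains $2L(2)u = 2u_{(3)}u = c_u \vacuum$, hence $L(2)u = (c_u/2)\vacuum$, which yields the required value.
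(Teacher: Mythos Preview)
Your proposal is correct and follows essentially the same route as the paper: both use the explicit formula $u=\omega_{K_{12}}-\tfrac{1}{9}\sum_{A\in\mathcal{A}} e_A$ from Appendix~A together with the counting in Lemma~\ref{orthal}. The only difference is organizational: the paper expands both copies of $u$ and evaluates the full double sum $\sum_{A,B}(\rho_\beta e_A\,|\,e_B)$ directly, whereas you expand only one factor and invoke the preceding lemma for $(\rho_\beta u\,|\,e_A)$, which is slightly cleaner since it avoids redoing that computation.
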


\begin{proof}
By Appendix A, we have $u= \om_{K_{12}} -\frac{1}{9} \sum_{A\in \mathcal{A}} e_A$. 
Then
\[
\begin{split}
  (\rho_\be u\,|\,u) 
  & = \l( \om_{K_{12}} -\dfrac{1}{9}\sum_{A\in \mathcal{A}} \rho_\be e_A \,\Bigg|\, \om_{K_{12}} -\frac{1}9\sum_{B\in \mathcal{A}} e_B\r)
  \vsb\\
  &= 6 - 2\cd \frac{28}{5} + \frac{1}{81} \sum_{A,B\in \mathcal{A}} (\rho_\be e_A \,|\, e_B),
  \vsb\\
  & = -\frac{26}5 + \frac{1}{81} \sum_{A\in \mathcal{A}} \left( (\rho_\be e_A|e_A) + 80\cd \frac{1}{25}\right).
\end{split}
\]
Thus by the previous lemma, if $\be$ has norm $4$ or $6$, then
\[
\begin{split}
  (\rho_\be u\,|\,u) 
  &= -\frac{26}{5} + \frac{1}{81} \left( \frac{270}{6} \l( \frac{2}5 +\frac{80}{25}\r) 
  +\frac{756-270}{6}\l(\frac{1}{25}+\frac{80}{25}\r) \right) =\frac{1}{25}.
\end{split}
\]

If $\be $ has norm $8$, then
\[
  (\rho_\be u\,|\,u) = -\frac{26}{5} + \frac{1}{81}\left(\frac{216}{6} 
  \l(\frac{2}{5} +\frac{80}{25}\r) + \frac{756-216}{6}\l(\frac{1}{25}+\frac{80}{25}\r) 
  \right)= 0.
\]
Hence, we have the desired result.
\end{proof}

\begin{lem}\label{sigmaea}
Let $A, B$ be two distinct $\au$-invariant $\sqrt{2}A_2$-sublattices of 
$K_{12}$ such that $(A, B)\neq 0$. 
Then we have $\sigma_{e_A}( \rho_\be e_B) = \rho_{t_A\be  } ( e_{t_AB})$, 
where $t_A$ is the RSSD involution associated to $A$.
\end{lem}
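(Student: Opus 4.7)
The plan is to exploit the fact that $\sigma_{e_A}$ is an automorphism of the VOA $V_{K_{12}}$ (and preserves $V_{K_{12}}^{\hat{\au}}$) to split the claim into two separate identities. For any automorphism $g$, any other automorphism $h$, and any vector $v$, one has $g(hv)=(ghg^{-1})(gv)$. Specializing to $g=\sigma_{e_A}$, $h=\rho_\beta$, $v=e_B$ gives
\[
  \sigma_{e_A}(\rho_\beta e_B)
  = \bigl(\sigma_{e_A}\rho_\beta\sigma_{e_A}^{-1}\bigr)\bigl(\sigma_{e_A}(e_B)\bigr).
\]
It therefore suffices to establish (i) $\sigma_{e_A}\rho_\beta\sigma_{e_A}^{-1}=\rho_{t_A\beta}$ and (ii) $\sigma_{e_A}(e_B)=e_{t_A B}$.

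For (i), I would apply Lemma \ref{phise} to get $\varphi(\sigma_{e_A})=t_A$ in $O(K_{12})$. Since $K_{12}(2)=\varnothing$, Remark \ref{L2=0} and Theorem \ref{aut} imply $\ker\varphi=N(V_{K_{12}})$ is abelian and generated by the automorphisms $\exp(\lambda\gamma_{(0)})$. Writing $\sigma_{e_A}=n\,\hat{t}_A$ with $n\in N(V_{K_{12}})$ and $\hat{t}_A$ a lift of $t_A$, commutativity of $N(V_{K_{12}})$ gives $n\rho_\beta n^{-1}=\rho_\beta$, while $\hat{t}_A\,\zeta^{\beta_{(0)}}\,\hat{t}_A^{-1}=\zeta^{(t_A\beta)_{(0)}}=\rho_{t_A\beta}$. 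This is exactly the conjugation computation already used in the proof of Lemma \ref{gen}.

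For (ii), which is the main step, I would argue as follows. Because $\sigma_{e_A}$ fixes $e_A$ (as $e_A\in V_{e_A}(0)$) and commutes with $\hat{\au}$ (both being automorphisms of $V_{K_{12}}^{\hat{\au}}$), it sends the canonical family $E_1=\{\rho_\gamma e_C\mid C\in\mathcal{A},\,\gamma\in C(4)\}$ to itself. Since $\varphi(\sigma_{e_A})=t_A$ permutes the $\hat{\au}$-invariant $\sqrt{2}A_2$-sublattices by $C\mapsto t_A C$, one has $\sigma_{e_A}(e_B)=\rho_\gamma e_{t_AB}$ for some $\gamma$, and what remains is to show $\gamma=0$. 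Using the explicit form $e_B=\tfrac{2}{5}\omega_B+(-1)^i\tfrac{1}{5}\sum_{j=0}^2\hat{\au}^j(\ee^{\alpha}+\ee^{-\alpha})$ from Lemma \ref{eA}, I would apply $\sigma_{e_A}=n\hat{t}_A$ termwise: $\hat{t}_A$ sends $\omega_B$ to $\omega_{t_AB}$ and the $\hat{\au}$-orbit sum over $B(4)$ to the corresponding sum over $(t_AB)(4)$ up to a cocycle twist, and the sign $(-1)^i$ of Notation \ref{Li} is preserved because $\sigma_{e_A}$ commutes with $\hat{\au}$. The main obstacle is the bookkeeping of the 2-cocycle and the character $n$ showing that the twist is trivial; as a backup, one can instead rule out $\gamma\neq 0$ by comparing $(\sigma_{e_A}(e_B)\,|\,e_C)=(e_B\,|\,\sigma_{e_A}(e_C))$ for various $C\in\mathcal{A}$ against the inner-product values computed in the preceding lemmas, thereby distinguishing $e_{t_AB}$ from its two $\rho$-translates $\rho_\delta e_{t_AB}$.
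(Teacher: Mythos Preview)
Your factorization $\sigma_{e_A}(\rho_\beta e_B)=(\sigma_{e_A}\rho_\beta\sigma_{e_A}^{-1})(\sigma_{e_A}e_B)$ is a genuinely different route from the paper's.  The paper does not split the problem; it uses Theorem \ref{thm:3.4}(2) in the form $\sigma_{e_A}x=e_A+x-5\,e_A\cdot x$ and computes $(e_A)_{(1)}(\rho_\beta e_B)$ directly inside $V_{A+B}$, where $A+B\cong A_2\otimes A_2$.  The cocycle signs $(-1)^i,(-1)^j,\epsilon$ (coming from Notation \ref{Li} and from $\ee^{\alpha}\cdot\ee^{g\alpha}=\epsilon\,\ee^{-g^2\alpha}$) are tracked by hand, and the formula drops out in one line.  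Your reduction has the virtue of separating the $\rho_\beta$-dependence, which is handled entirely by your step (i); that step is correct and is exactly the conjugation identity already used in Lemma \ref{gen}.

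The gap is in step (ii).  Your argument that $\sigma_{e_A}(e_B)$ lies in $V_{t_AB}^{\hat\nu}$ and hence is one of the three Virasoro vectors $\{e_{t_AB},\rho_\delta e_{t_AB},\rho_\delta^{-1}e_{t_AB}\}$ is fine (using $\sigma_{e_A}=n\hat t_A$, $n\in N(V_{K_{12}})$, plus Lemma \ref{c45}).  But neither of your two proposed methods for singling out $e_{t_AB}$ actually avoids the paper's computation.  The cocycle bookkeeping you mention is precisely the sign analysis the paper carries out.  The inner-product backup fails for a structural reason: by Section 4.2 Case 2, the nine $c=4/5$ Virasoro vectors in $V_{A+B}^{\hat\nu}\cong V_{A_2\otimes A_2}^{\hat\nu}$ all have mutual inner product $1/25$, so pairing $\sigma_{e_A}(e_B)$ against any $e_C$ whose image under $\sigma_{e_A}$ you already know ($C=A$, or $C\perp A$) cannot distinguish the three candidates.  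To see a difference you would need $(\sigma_{e_A}(e_B)\mid \ee^\gamma)$ for $\gamma\in (t_AB)(4)$, and since $(e_A\mid\ee^\gamma)=(e_B\mid\ee^\gamma)=0$ this reduces to $((e_A)_{(1)}e_B\mid\ee^\gamma)$, i.e.\ exactly the product the paper computes.
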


\begin{proof}
First we note that  $t_At_B$ has order 3 and $A+B\cong A_2\otimes A_2$.

Let $g=t_At_B$. Then $B=gA$ and $gB=t_AB$. Suppose that
\[
  \ee^\al \cdot \hat{\au}(\ee^\al) = (-1)^i \hat{\au}( \ee^{-\al})
  \quad \text{ and }\quad
  \ee^{g\al}\cdot  \hat{\au}(\ee^{g\al}) = (-1)^j \hat{\au}( \ee^{-g\al})
\]
for any $\al \in A$.

Let $\epsilon\in \{\pm 1\}$ such that
$\ee^\al\cdot \ee^{g\al}= \epsilon \ee^{-g^2\al}$ and
$\ee^{-\al}\cdot \ee^{-g\al}= \epsilon \ee^{g^2\al}$.
Then
\[
\begin{split}
  \ee^{g^2\al} \cdot \hat{\au} \ee^{g^2\al}
  & =  \ee^{-\al}\cdot \ee^{-g\al} \cdot \hat{\au}(\ee^{-\al}\cdot \ee^{-g\al}),
  \\
  &= (-1) \ee^{-\al}\cdot\hat{\au} (\ee^{-\al})  \cdot \ee^{-g\al}\cdot
  \hat{\au}(\ee^{-g\al}),
  \\
  &= (-1)^{1+i+j}  \hat{\au}^2( \ee^{\al} \cdot \ee^{g\al}),
  \\
  &= (-1)^{1+i+j}  \epsilon \hat{\au}^2( \ee^{-g^2\al}).
\end{split}
\]
Note that $(g\al,  \au\al)=-1$ for any $\al\in A(4)$.

Since
\[
  e_A= \frac{2}5 \om_A + (-1)^i \frac{1}5 \sum_{\al\in A(4)} \ee^\al
  \quad \text{ and } \quad
  \rho_\be e_B  =\frac{2}5 \om_B + (-1)^j \frac{1}5 \sum_{\al\in A(4)}
  \rho_{\be} \ee^{g\al},
\]
we have
\[
\begin{split}
  (e_A)_{(1)}( \rho_\be e_B) =
  & \frac{1}{25}\left[4\times \frac{1}2 (\om_A+\om_B-\om_{t_AB})
  + (-1)^i \sum_{\al\in A(4)} \ee^\al\right.
  \\
  &\quad  +\left. (-1)^j \sum_{\al\in A(4)} \rho_{\be} \ee^{g\al}
  + (-1)^{i+j} \epsilon \sum_{\al\in A(4)}  \zeta^{(\be, g\al)} \ee^{-g^2\al}\right].
\end{split}
\]
Hence,
\[
\begin{split}
  \sigma_{e_A}( \rho_\be e_B) = & e_A+\rho_\be e_B -5 (e_A)_{(1)}( \rho_\be e_B)
  \\
  = & \frac{2}5 \om_{t_AB} + \frac{1}5 (-1)^{1+i+j}\epsilon \sum_{\al\in A(4)}
    \zeta^{(t_A\be,- g^2\al)}  \ee^{-g^2\al}
  \\
  =&  \rho_{t_A\be  } ( e_{t_AB})
\end{split}
\]
as desired.
\end{proof}

\medskip

Next we will compute $\sigma_u(\rho_\be u)$.

\begin{rem}\label{8uu}
When $\be$ has norm $8$, we have $\sigma_u(\rho_\be u) = \rho_\be u$ since
$(u|\rho_\be u)=0$.
\end{rem}

\begin{lem}\label{sigmaea2}
Let $\be \in K_{12}$ and $A\in \mathcal{A}$.
Then we have $\sigma_{e_A}(\rho_\be u) = \rho_{t_A\be} u$ if
$(A, \be )\not\equiv 0\mod 3$.
\end{lem}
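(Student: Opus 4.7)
The plan is to exploit the explicit form of $u$ from Appendix~A, namely $u = \omega_{K_{12}} - \frac{1}{9}\sum_{B\in\mathcal{A}} e_B$, and to reduce the claim to the pointwise identity $\sigma_{e_A}(\rho_\beta e_B) = \rho_{t_A\beta}(e_{t_A B})$ for every $B\in\mathcal{A}$. Since $\rho_\beta$ acts trivially on $M(1)$ (Remark~\ref{L2=0}) and both $\sigma_{e_A}$ and $\rho_{t_A\beta}$ fix the conformal vector $\omega_{K_{12}}$, while $t_A$ permutes $\mathcal{A}$ (as an involution in $O(K_{12})$ commuting with $\nu$), summing the pointwise identity over $\mathcal{A}$ and reindexing $B\mapsto t_A B$ yields
\[
  \sigma_{e_A}(\rho_\beta u)
  = \omega_{K_{12}} - \tfrac{1}{9}\sum_{B\in\mathcal{A}}\rho_{t_A\beta}e_{t_A B}
  = \omega_{K_{12}} - \tfrac{1}{9}\sum_{B'\in\mathcal{A}}\rho_{t_A\beta}e_{B'}
  = \rho_{t_A\beta} u.
\]

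The pointwise identity splits into three cases. When $A\neq B$ and $A\not\perp B$, it is exactly Lemma~\ref{sigmaea}. When $A\perp B$, the vectors $e_A$ and $\rho_\beta e_B$ lie in mutually orthogonal sub VOAs, so $(e_A\,|\,\rho_\beta e_B)=0$; Theorem~\ref{thm:3.4}(1) then yields $\sigma_{e_A}(\rho_\beta e_B)=\rho_\beta e_B$, while on the other side $t_A B=B$ and $(t_A\beta,\gamma)=(\beta,\gamma)$ for $\gamma\in B$, so $\rho_{t_A\beta}e_B=\rho_\beta e_B$. When $A=B$, we invoke Lemma~\ref{lem:5.4} to identify $\sigma_{e_A}$ on $V_A$ with the $(-1)$-lift $\theta_A$; the substitution $\gamma\mapsto-\gamma$ in the expansion $\rho_\beta e_A=\tfrac{2}{5}\omega_A+(-1)^i\tfrac{1}{5}\sum_{\gamma\in A(4)}\zeta^{(\beta,\gamma)}\ee^\gamma$ then converts the exponent $(\beta,\gamma)$ into $-(\beta,\gamma)=(t_A\beta,\gamma)$, giving $\theta_A(\rho_\beta e_A)=\rho_{t_A\beta}e_A$.

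The main subtle point is the $A=B$ case, since Lemma~\ref{lem:5.4} concerns $V_{\sqrt{2}A_2}$ intrinsically whereas $\sigma_{e_A}$ is the involution defined on $V_{K_{12}}^{\hat\nu}$; the two coincide on $V_A$ because $V_A$, as a $\vir(e_A)$-module, decomposes into irreducibles of weight $0, 2/5, 7/5, 3$ only (Lemma~4.1 of \cite{KMY}), so $\sigma_{e_A}$ is uniformly determined by the $\vir(e_A)$-eigenvalue pattern and the intrinsic involution of $V_A$ extends consistently into $V_{K_{12}}^{\hat\nu}$.
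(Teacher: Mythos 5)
Your proposal is correct and follows essentially the same route as the paper: expand $u=\omega_{K_{12}}-\frac{1}{9}\sum_{B\in\mathcal{A}}e_B$, apply $\sigma_{e_A}$ termwise via the identity $\sigma_{e_A}(\rho_\beta e_B)=\rho_{t_A\beta}(e_{t_AB})$, and reindex over $t_A\mathcal{A}=\mathcal{A}$. The only difference is that the paper's one-line proof silently applies Lemma~\ref{sigmaea} to all $B$, whereas you explicitly (and correctly) check the two cases $A\perp B$ and $A=B$ that Lemma~\ref{sigmaea} does not formally cover.
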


\begin{proof}
Since $\rho_\be u= \om_{K_{12}} -\frac{1}9 \sum_{B\in \mathcal{A}} \rho_\be (e_B)$, we have
\[
  \sigma_{e_A} (\rho_\be(u)) = \om_{K_{12}} -\frac{1}9 \sum_{B\in \mathcal{A}} \sigma_{e_A}(\rho_\be (e_B)), 
  = \om_{K_{12}} -\frac{1}9 \sum_{B\in \mathcal{A}} (\rho_{t_A\be} (e_{t_AB})) 
  =\rho_{t_A\be } (u)
\]
as desired.
\end{proof}

\begin{lem}\label{norm4uu}
Let $\be \in K_{12}(4)$. Then we have
\[
\sigma_{u}(\rho_\be u) =
\rho_\be^{-1} e_{A(\be)} ,
\]
where $A(\be)=\Span_\Z\{\be, \au\be\}\cong \sqrt{2}A_2$.
\end{lem}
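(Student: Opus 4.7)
The plan is to avoid computing $u\cd \rho_\be u$ directly; instead I will leverage the symmetry $\sigma_e f = \sigma_f e$ from Theorem~\ref{thm:3.4}(2) and show that $\sigma_{\rho_\be^{-1}e_{A(\be)}}(u)=\rho_\be u$, from which the claim follows by applying the involution $\sigma_u$ once more.

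First, I verify the inner-product condition $(u\,|\,\rho_\be^{-1}e_{A(\be)})=1/25$. Because $\rho_\be^{-1}$ is a VOA automorphism and hence preserves the invariant bilinear form, this reduces to $(\rho_\be u\,|\,e_{A(\be)})=1/25$, which follows from the immediately preceding inner-product lemma once one notes that $(\be,\be)=4\not\equiv 0\pmod 3$, so $(\be,A(\be))\not\equiv 0\pmod 3$.

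Next, I apply Lemma~\ref{sigmaea2} with the pair $(A(\be),\be)$, whose pairing condition holds by the same observation, to obtain $\sigma_{e_{A(\be)}}(\rho_\be u)=\rho_{t_{A(\be)}\be}\,u$. Since $\be\in A(\be)$, we have $t_{A(\be)}\be=-\be$, so $\sigma_{e_{A(\be)}}(\rho_\be u)=\rho_\be^{-1}u$. Using the conjugation rule $\sigma_{gx}=g\sigma_x g^{-1}$, I then compute
\[
  \sigma_{\rho_\be^{-1}e_{A(\be)}}(u)
  = \rho_\be^{-1}\,\sigma_{e_{A(\be)}}(\rho_\be u)
  = \rho_\be^{-1}\cd \rho_\be^{-1}u
  = \rho_\be^{-2}u
  = \rho_\be u,
\]
where the last equality uses $\rho_\be^{\,3}=1$.

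Finally, Theorem~\ref{thm:3.4}(2), applied to the pair $(u,\rho_\be^{-1}e_{A(\be)})$ whose inner product is $1/25$, yields
$\sigma_u(\rho_\be^{-1}e_{A(\be)})=\sigma_{\rho_\be^{-1}e_{A(\be)}}(u)=\rho_\be u$; applying the involution $\sigma_u$ to both sides gives the desired identity $\sigma_u(\rho_\be u)=\rho_\be^{-1}e_{A(\be)}$. The only delicate points are the geometric checks $(A(\be),\be)\not\equiv 0\pmod 3$ and $t_{A(\be)}\be=-\be$, both of which are immediate from $\be\in A(\be)(4)$; so no significant obstacle is expected, and the result is really a clean consequence of the $S_3$-symmetry of the Griess triangle formed by $u$, $\rho_\be u$ and $\rho_\be^{-1}e_{A(\be)}$.
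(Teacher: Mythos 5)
Your proposal is correct and follows essentially the same route as the paper: the authors likewise apply Lemma~\ref{sigmaea2} with $A=A(\be)$ to get $\sigma_{e_{A(\be)}}(\rho_\be u)=\rho_\be^{-1}u$, then use the conjugation rule together with the symmetry $\sigma_e f=\sigma_f e$ to show $\sigma_u(\rho_\be^{-1}e_{A(\be)})=\rho_\be u$ and conclude by applying the involution $\sigma_u$. Your explicit verification of $(\rho_\be u\,|\,e_{A(\be)})=1/25$ is a point the paper leaves implicit, but the argument is otherwise identical.
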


\begin{proof}
Let $\be$  be a norm $4$ vector in $K_{12}$. Then $A(\be)=\Span_\Z\{\be,  \au\be\}$ is a $\au$-invariant
$\sqrt{2}A_2$-sublattice.  By Lemma \ref{sigmaea2}, we have
\[
\sigma_{e_{A(\be)}} (\rho_\be  u ) = \rho_{t_{A(\be)} \be}(  u) =\rho_{\be}^{-1}  u.
\]
Notice that $t_{A(\be)} \be= -\be$ since $\be \in A(\be)$.  Therefore, we have
\[
\sigma_u( \rho_{\be}^{-1} e_{A(\be)})  = \rho_{\be}^{-1} \sigma_{\rho_{\be} u} e_{A(\be)}
 = \rho_{\be}^{-1} \sigma_{e_{A(\be)}} \rho_{\be}u = \rho_{\be}^{-1} \rho_{\be}^{-1} u  =\rho_{\be}  u.
 \]
Hence $\sigma_u (\rho_{\be}u) = \rho_{\be}^{-1} e_{A(\be)}$.
\end{proof}

\medskip

In the following, we will study the case when $\be$ has norm $6$.

\begin{nota}\label{Sa}
For any $\al\in K_{12}(4)$,  we denote $$S_\al=\{ \gamma\in K_{12}(4) \mid (\al, \gamma) =2\text{ but } \gamma \notin
A(\al)\}.$$ Notice that if $\gamma\in S_\al$, then $\al-\gamma$ is of norm $4$,  $\al-\gamma\in S_\al $ and $\al=\gamma+(\al
-\gamma)$.
\end{nota}

The next lemma can be obtained by a direct calculation.
\begin{lem}\label{norm60}
Let $\be$ be a norm $6$ vector   and  $\al$ a norm $4$ vector in $K_{12}$. Let  $S_\al$  be defined as in Notation \ref{Sa} and let
\[
Z_{\al, \be} =\{ \gamma\in S_\al\mid (\gamma, \beta ) \equiv 0 \mod 3\}.
\]
Then we have
\[
|Z_{\al, \be}|=
\begin{cases}
26 & \text{ if } (\al, \be )\equiv 0\mod 3,\\
30 & \text{ if } (\al, \be )\not\equiv 0\mod 3.
\end{cases}
\]
\end{lem}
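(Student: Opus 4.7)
The plan is to verify the two counts by direct enumeration using the hexacode realization \eqref{dK12} of $K_{12}$. Since $O(K_{12})$ acts transitively on $K_{12}(4)$ by Lemma~\ref{PK12}(2), I may fix a convenient representative $\al \in K_{12}(4)$ and first determine the inner-product distribution of $\al$ against the other $755$ norm $4$ vectors; this gives $N_2 := |\{\gamma \in K_{12}(4) : (\al,\gamma) = 2\}|$ and hence $|S_\al| = N_2 - 3$ after removing the three vectors in $A(\al)$ at inner product $2$ with $\al$. Note also that $\gamma \mapsto \al - \gamma$ is an involution on $S_\al$ (as noted in Notation~\ref{Sa}), which will be useful for the bookkeeping below.

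For $|Z_{\al,\be}|$, the first observation is that $(\gamma,(1-\au)\delta) \in 3\Z$ for any $\gamma,\delta \in K_{12}$, so $(\gamma,\be) \pmod 3$ depends only on the coset of $\be$ in $K_{12}/(1-\au)K_{12}$. By Lemma~\ref{3to6} and the coset representatives listed in \cite[p.~427]{CS2}, the $4032$ norm $6$ vectors split into $224$ such cosets of size $18$. Combined with the action of $\cent_{O(K_{12})}(\au)$ on pairs $(\al,\be)$, this reduces the problem to checking a small number of orbit representatives for each of the two cases $(\al,\be) \equiv 0$ and $(\al,\be) \not\equiv 0 \pmod 3$. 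For each chosen $\be$, I would then enumerate $\{\gamma\in S_\al : (\gamma,\be)\equiv 0 \pmod 3\}$ directly from the hexacode coordinates, or equivalently evaluate the twisted theta sum
\[
  |Z_{\al,\be}| \;=\; \frac{|S_\al|}{3} \,+\, \frac{2}{3}\,\mathrm{Re}\sum_{\gamma\in S_\al}\zeta^{(\gamma,\be)}, \qquad \zeta=e^{2\pi\sqrt{-1}/3},
\]
which by the previous observation depends only on $(\al,\be) \pmod 3$.

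The main obstacle is controlling the explicit enumeration: the two target values $26$ and $30$ differ by only $4$, so the reduction to the smallest possible number of orbit representatives and careful bookkeeping of the $\la\au\ra$-orbit structure on $S_\al$ (together with the pairing $\gamma \leftrightarrow \al-\gamma$) are essential to avoid arithmetic error. Once the character sum above is evaluated for a single representative $\be$ in each of the two residue classes, the uniform formula propagates the result to all $\be$ of the same type, which completes the verification.
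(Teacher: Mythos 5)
The paper offers no argument for this lemma beyond the remark that it ``can be obtained by a direct calculation,'' so there is no written proof to compare against; your plan is a sensible blueprint for such a calculation, and its main ingredients (the fact that $(\gamma,\be)\bmod 3$ depends only on the coset $\be+(1-\au)K_{12}$, and the character-sum identity for $|Z_{\al,\be}|$) are sound. Two concrete points need repair, though. First, a bookkeeping error: exactly \emph{two} vectors of $A(\al)(4)$ have inner product $2$ with $\al$, namely $-\au\al$ and $-\au^{2}\al$ (since $(\al,\au\al)=(\al,\au^{2}\al)=-2$), so $|S_\al|=N_2-2$, not $N_2-3$. In fact $N_2=82$ and $|S_\al|=80$; this is the same $80$ that appears in Corollary \ref{orthA} and Lemma \ref{norm6case}, because each of the $80$ sublattices $B\in\mathcal{A}$ with $B\neq A(\al)$ and $B\not\perp A(\al)$ contributes exactly one vector to $S_\al$ (in each $\au$-orbit of $B(4)$ the three inner products with $\al$ are congruent mod $3$ and sum to zero, forcing the pattern $(2,-1,-1)$ or $(1,1,-2)$). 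Your own observation that $\gamma\mapsto\al-\gamma$ is a fixed-point-free involution on $S_\al$ already shows $|S_\al|$ is even, so $N_2-3$ cannot be right; in a count where the two target values differ by $4$, this matters.

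Second, and more substantively, the concluding sentence asserts that evaluating the character sum for a \emph{single} representative $\be$ in each residue class propagates the result to all $\be$ of that type. What you have actually established is only that the sum depends on the coset $\be+(1-\au)K_{12}$; there are $224$ such cosets of norm-$6$ vectors, and a priori the sum could vary among cosets sharing the same value of $(\al,\be)\bmod 3$. To reduce to one representative per case you need a transitivity statement, e.g.\ that the stabilizer of $\al$ in $\cent_{O(K_{12})}(\au)$ is transitive on the norm-$6$ cosets with prescribed $(\al,\be)\bmod 3$. The cleanest route is to transport the whole count to the minus-type quadratic space $K_{12}/(1-\au)K_{12}\cong\F_3^6$: the $126$ lattices of $\mathcal{A}$ correspond to the $126$ norm-$1$ lines, the norm-$6$ classes to the nonzero singular vectors, and $|Z_{\al,\be}|$ becomes the number of norm-$1$ lines $\la\bar\gamma\ra\neq\la\bar\al\ra$ with $(\bar\gamma,\bar\al)\neq 0$ and $(\bar\gamma,\bar\be)=0$. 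Witt's theorem then yields exactly the dichotomy of the lemma, and the values $26$ and $30$ follow from standard point counts in the relevant $4$- and $3$-dimensional orthogonal spaces over $\F_3$. Without such an argument (or an enumeration covering all relevant cosets), the plan does not yet establish that only the two stated values occur.
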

\medskip

\begin{nota}
For any $\al \in K_{12}(4)$, we denote
\[
e_\al =
\begin{cases}
  \ee^\al & \text{ if }  \al \in S^1(4)\cup S^2(4),
  \\
  -\ee^\al &  \text{ if }  \al \in S^3(4)\cup W(4).
\end{cases}
\]
\end{nota}

\begin{lem}\label{norm6case}
  Let $\be$ be a norm $6$ vector in $K_{12}$.
  Then
  \[
    \l(\sum_{\al\in K_{12}(4)} e_\al\r)_{(1)}\l(\sum_{\al\in K_{12}(4)} \rho_\be e_\al\r)
    = 9\om_{K_{12}} - 3\sum_{\al\perp \be } e_\al -12 \sum_{\al\not\perp \be } \rho_\be^2 e_\al.
\]
\end{lem}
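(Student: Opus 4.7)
The plan is a direct computation in the Griess algebra of $V_{K_{12}}$. Writing $e_\al=s_\al\ee^\al$ with $s_\al\in\{\pm1\}$, expanding the LHS by bilinearity gives $\sum_{\al,\gamma\in K_{12}(4)} s_\al s_\gamma\zeta^{(\be,\gamma)}\ee^\al_{(1)}\ee^\gamma$. In the lattice VOA the product $\ee^\al_{(1)}\ee^\gamma$ vanishes unless $(\al,\gamma)\leq-2$, so for $\al,\gamma\in K_{12}(4)$ only two cases contribute: the diagonal case $\gamma=-\al$, where $\ee^\al_{(1)}\ee^{-\al}=\tfr{1}{2}\varepsilon(\al,-\al)(\al(-1)^2+\al(-2))\vac$, and the near-diagonal case $(\al,\gamma)=-2$, where $\al+\gamma\in K_{12}(4)$ and $\ee^\al_{(1)}\ee^\gamma=\varepsilon(\al,\gamma)\ee^{\al+\gamma}$; all other pairs contribute zero.

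For the diagonal part, note that $s_{-\al}=s_\al$ since the partition $K_{12}(4)=S^1(4)\sqcup S^2(4)\sqcup S^3(4)\sqcup W(4)$ is invariant under negation. Pairing $\al$ with $-\al$ in the sum, the $\al(-2)$ contributions assemble into a vector $\xi_\be(-2)\vac$ for some $\xi_\be\in K_{12}\otimes\C$ that is $\mathrm{Stab}_{O(K_{12})}(\be)$-invariant; hence $\xi_\be\in\C\be$, and a character-sum computation of $(\xi_\be,\be)$ over the shells $\{\al\in K_{12}(4):(\be,\al)=k\}$ forces $(\xi_\be,\be)=0$, so $\xi_\be=0$ and the $\al(-2)$ terms drop out. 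The remaining $\al(-1)^2$ sum can be rewritten via the second-moment identity $\sum_{\al\in K_{12}(4)}\al\otimes\al=252\cdot\mathrm{id}_{K_{12}\otimes\R}$ (from $O(K_{12})$-transitivity on $K_{12}(4)$ together with a trace computation) plus the shell counts supplied by Lemma \ref{orthal}, yielding $9\om_{K_{12}}$.

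For the near-diagonal part, fix $\delta\in K_{12}(4)$; the coefficient of $\ee^\delta$ on the LHS equals $\zeta^{(\be,\delta)}\sum_{\al\in T_\delta}s_\al s_{\delta-\al}\zeta^{-(\be,\al)}\varepsilon(\al,\delta-\al)$, where $T_\delta:=\{\al\in K_{12}(4):(\al,\delta)=2\}$ (so that $\gamma=\delta-\al\in K_{12}(4)$ is automatic). Decompose $T_\delta=\{-\au\delta,-\au^2\delta\}\sqcup S_\delta$ as in Notation \ref{Sa}. The two contributions from $A(\delta)$ are computed directly using the $\sqrt{2}A_2$-structure on $A(\delta)$, while the sum over $S_\delta$ is partitioned further by the residue $(\be,\al)\bmod 3$ and evaluated with the aid of Lemma \ref{norm60}, which supplies $|Z_{\delta,\be}|=26$ or $30$ depending on whether $(\be,\delta)\equiv 0\pmod 3$. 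Assembling these pieces and matching against $-3s_\delta$ if $(\be,\delta)\equiv 0\pmod 3$ and $-12s_\delta\zeta^{(\be,\delta)}$ otherwise completes the verification.

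The principal obstacle is the sign bookkeeping in the near-diagonal step. The cocycle $\varepsilon(\al,\delta-\al)=(-1)^{(\al_2,(\delta-\al)_1)}$ interacts nontrivially with the product $s_\al s_{\delta-\al}$, and both depend on how $\al$ and $\delta-\al$ distribute among the sublattices $S^1,S^2,S^3,W$. I would organize the analysis by first verifying the identity for a chosen orbit representative $\delta\in S^1(4)$ and then propagating to all of $K_{12}(4)$ via the $\hat{\au}$-action and the RSSD involutions $t_{A(\gamma)}$, both of which permute the sublattice partition in a controlled way.
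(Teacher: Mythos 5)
Your overall strategy coincides with the paper's: expand bilinearly, observe that only the pairs with $(\al,\gamma)=-4$ (i.e.\ $\gamma=-\al$) and $(\al,\gamma)=-2$ survive, extract the $\om_{K_{12}}$-part from the diagonal terms, and compute the coefficient of each $\ee^\delta$ from the near-diagonal terms by decomposing $T_\delta$ into the $A(\delta)$-part and $S_\delta$, using Lemmas \ref{orthal} and \ref{norm60}. However, the two places where you defer the work are exactly where the content of the lemma lies, and your proposed substitutes do not close them. First, the sign bookkeeping: the paper does not ``propagate from one orbit representative'' --- it proves, in Lemma \ref{cocyle} of the Appendix, the explicit identities $e_\gamma \cd e_{\delta-\gamma}=e_\delta$ for $\gamma\in S_\delta$ and $e_{-\au\delta}\cd e_{-\au^2\delta}=-e_\delta$, by a case analysis of the cocycle $\varepsilon$ against the partition $K_{12}(4)=S^1(4)\cup S^2(4)\cup S^3(4)\cup W(4)$. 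Your propagation argument is not sound as stated: the group generated by $\hat{\au}$ and the $t_{A(\gamma)}$ does not obviously act transitively on the pairs $(\delta,\al)$ with $\al\in S_\delta$, and, more seriously, neither the signs $s_\al$ (which are defined via the $\EE$-pair decomposition, not intrinsically) nor the cocycle $\varepsilon$ of Notation \ref{cocycle} is equivariant under arbitrary isometries, so transporting the identity along a symmetry requires precisely the case analysis you are trying to avoid.

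Second, ``evaluated with the aid of Lemma \ref{norm60}'' is not enough for the $S_\delta$-sum. You need $\sum_{\al\in S_\delta}\zeta^{-(\be,\al)}$, hence the separate counts of $\al\in S_\delta$ with $(\be,\al)\equiv 1$ and $\equiv 2 \pmod 3$, whereas Lemma \ref{norm60} only supplies $\abs{Z_{\delta,\be}}$. When $(\be,\delta)\equiv 0$ the involution $\al\mapsto \delta-\al$ of $S_\delta$ forces the nonzero residues to split evenly and the coefficient $-3$ comes out; but when $(\be,\delta)\not\equiv 0$ that involution sends residue $r$ to $(\be,\delta)-r$ and does \emph{not} give an even split --- indeed, for the stated coefficient $-12$ to emerge the $50$ elements of $S_\delta\setminus Z_{\delta,\be}$ must split $30{:}20$, not $25{:}25$. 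So an additional counting lemma is needed here (the paper's own displayed intermediate formula is loose on exactly this point). Two smaller remarks: your $\al(-2)$-cancellation via $\mathrm{Stab}_{O(K_{12})}(\be)$-invariance needs the extra fact that the stabilizer has no nonzero invariants in $\be^\perp$; it is cleaner to note that the coefficients are constant on $\au$-orbits $\{\al,\au\al,\au^2\al\}$ (since $(\be,\au\al)\equiv(\be,\al)\pmod 3$) and $(1+\au+\au^2)\al=0$. Likewise, passing from the shell counts to $9\om_{K_{12}}$ requires knowing that $\sum_{\al\perp\be}\al\otimes\al$ is itself a scalar multiple of the identity, which does not follow from the total second moment plus cardinalities alone.
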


\begin{proof}
First we note that  for any $\gamma\in S_\al$, we have $e_\gamma e_{\al-\gamma}= e_{\al-\gamma}e_\gamma  $ and
$e_{-\au\al} e_{-\au^2 \al} = e_{-\au^2\al} e_{-\au \al} = - e_{\al}$ (cf. Lemma \ref{cocyle}).
Thus,
\[
\begin{split}
  & \l(\sum_{\al\in K_{12}(4)} e_\al\r)_{(1)}\l(\sum_{\al\in K_{12}(4)} \rho_\be e_\al\r)
  \\
  = &\ \sum_{\al\perp \be} \frac{\al(-1)^2\vac}2 
  + \sum_{\al\not\perp \be}(\zeta +\zeta^2) \frac{\al(-1)^2\vac}4
  \\
  &\ + \sum_{\al \perp \be }  \left( |Z_{\al, \be}| - \frac{ 80 -|Z_{\al, \be}|}2 -2\right) e_\al
  \ + \sum_{\al\not  \perp \be }  \left( |Z_{\al, \be}| - \frac{ 80 -|Z_{\al, \be}|}2 -2\right) \rho_{\be}^2e_\al\\
 = &\  9\om_{K_{12}} - 3\sum_{\al\perp \be } e_\al -12 \sum_{\al\not\perp \be } \rho_\be^2 e_\al
\end{split}
\]
by Lemma \ref{norm60}.  
Note also that $\frac{1}{2}\sum_{\al\perp \be} \al(-1)^2\vac =90 \om_{K_{12}}$ and 
$\frac{1}{4}\sum_{\al\not\perp \be}\al(-1)^2\vac = 81 \om_{K_{12}}$ since there are exactly $45$ $\au$-invariant $\sqrt{2}A_2$ -sublattices perpendicular to
$\be$ and $81$ $\au$-invariant $\sqrt{2}A_2$ -sublattices  not perpendicular to $\be$ (see Lemma \ref{orthal} ).
\end{proof}

\begin{lem}\label{uu}
  Let $\be$ be a norm $6$ vector in $K_{12}$. 
  Then we have $\sigma_{u}(\rho_\be u) = \rho_\be^{-1} u$.
\end{lem}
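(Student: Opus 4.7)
The plan is to apply Theorem \ref{thm:3.4}(2). From the preceding inner-product computation, $(u|\rho_\beta u) = 1/25$ for $\beta$ of norm $6$, so Theorem \ref{thm:3.4} yields
\begin{equation*}
  \sigma_u(\rho_\beta u) \;=\; u + \rho_\beta u - 5\, u_{(1)}(\rho_\beta u),
\end{equation*}
and the task reduces to computing the Griess product $u_{(1)}(\rho_\beta u)$ and checking that this expression matches $\rho_\beta^{-1} u$.

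I would use the explicit formula $u = \omega_{K_{12}} - \tfrac{1}{9}\sum_{A\in\mathcal{A}} e_A$ from Appendix~A together with the expansion $e_A = \tfrac{2}{5}\omega_A + \tfrac{1}{5}\sum_{\alpha\in A(4)} e_\alpha$ from Lemma \ref{eA}. Setting $\Omega := \sum_{A\in\mathcal{A}}\omega_A$ and $\Sigma := \sum_{\alpha\in K_{12}(4)} e_\alpha$, we obtain
\begin{equation*}
u \;=\; \omega_{K_{12}} - \tfrac{2}{45}\Omega - \tfrac{1}{45}\Sigma,\qquad \rho_\beta u \;=\; \omega_{K_{12}} - \tfrac{2}{45}\Omega - \tfrac{1}{45}\rho_\beta\Sigma,
\end{equation*}
since $\rho_\beta$ is trivial on $M(1)$ and acts by $\rho_\beta e_\alpha = \zeta^{(\beta,\alpha)}e_\alpha$. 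Expanding $u_{(1)}(\rho_\beta u)$ bilinearly, the $\omega_{K_{12}}$-pieces collapse via $\omega_{K_{12},(1)}v = 2v$ on weight-$2$ vectors, the Heisenberg mixed terms $\Omega_{(1)}\Omega$, $\Omega_{(1)}\rho_\beta\Sigma$, $\Sigma_{(1)}\Omega$ reduce to elementary computations of $(\omega_A)_{(1)}\omega_B$ and $(\omega_A)_{(1)}e_\alpha$, and the essential exponential-exponential piece $\Sigma_{(1)}(\rho_\beta\Sigma)$ is delivered directly by Lemma \ref{norm6case}:
\begin{equation*}
\Sigma_{(1)}(\rho_\beta\Sigma) \;=\; 9\omega_{K_{12}} - 3\sum_{\alpha\in P_\beta}e_\alpha - 12\sum_{\alpha\notin P_\beta}\rho_\beta^{-1}e_\alpha,
\end{equation*}
where $P_\beta := \{\alpha\in K_{12}(4) : (\alpha,\beta)\equiv 0 \pmod{3}\}$ has size $270$ by Lemma \ref{orthal}.

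Finally, one substitutes and regroups by component type. On the right-hand side, $\rho_\beta^{-1}u = \omega_{K_{12}} - \tfrac{2}{45}\Omega - \tfrac{1}{45}\rho_\beta^{-1}\Sigma$ and $\rho_\beta^{-1}\Sigma = \sum_{\alpha\in P_\beta}e_\alpha + \sum_{\alpha\notin P_\beta}\rho_\beta^{-1}e_\alpha$, so the target identity
\begin{equation*}
u + \rho_\beta u - 5\,u_{(1)}(\rho_\beta u) \;=\; \rho_\beta^{-1}u
\end{equation*}
reduces to matching coefficients of the four distinct components $\omega_{K_{12}}$, $\Omega$, $\sum_{\alpha\in P_\beta}e_\alpha$, and $\sum_{\alpha\notin P_\beta}\rho_\beta^{-1}e_\alpha$. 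The main obstacle is the careful coefficient bookkeeping across these four pieces, particularly verifying that the $9\omega_{K_{12}}$-contribution from Lemma \ref{norm6case} is precisely absorbed by the Heisenberg cross-terms, and that the sign conventions packaged into the notation $e_\alpha$ (encoding the $L^0/L^1$ split of Notation \ref{Li}) propagate consistently throughout; once these are handled, the identity follows directly.
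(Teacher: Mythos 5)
Your proposal follows the paper's own proof essentially verbatim: the paper likewise writes $\sigma_u(\rho_\be u)=u+\rho_\be u-5\,u_{(1)}(\rho_\be u)$, substitutes the explicit form $u=\frac{1}{15}\om_{K_{12}}+\frac{1}{45}\sum_{\al\in K_{12}(4)}e_\al$, feeds in Lemma \ref{norm6case} for the exponential--exponential product, and matches coefficients on the components $\om_{K_{12}}$, $\sum_{\al\perp\be}e_\al$ and $\sum_{\al\not\perp\be}\rho_\be^{2}e_\al$ (note $\rho_\be^2=\rho_\be^{-1}$). The only cosmetic difference is that you carry $\Omega=\sum_{A\in\mathcal{A}}\om_A$ as a separate component, whereas the paper absorbs it at once via $\sum_{A\in\mathcal{A}}\om_A=21\,\om_{K_{12}}$.
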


\begin{proof}
Suppose $\be$ has norm $6$. Then by Lemma \ref{norm6case}, we have
\[
  \l(\sum_{\al\in K_{12}(4)} e_\al\r)_{(1)}\l(\sum_{\al\in K_{12}(4)} \rho_\be e_\al\r)
  = 9\om_{K_{12}} - 3\sum_{\al\perp \be } e_\al -12 \sum_{\al\not\perp \be } \rho_\be^2 e_\al.
\]
Hence,
\[
\begin{split}
u_1(\rho_\be u) = &\left( \frac{1}{15}\om_{K_{12}}+\frac{1}{45} \sum_{\al\in K_{12}} e_\al\right )_{(1)} \left( \frac{1}{15}\om_{K_{12}} +\frac{1}{45} \sum_{\al\in K_{12}}\rho_\be e_\al\right ),\\
 =& \frac{1}{45^2} \left( 18 \om_{K_{12}} +6 \sum_{\al\in K_{12}} e_\al + 6\sum_{\al\in K_{12}} \rho_\be e_\al + 9\om_{K_{12}} - 3\sum_{\al\perp \be } e_\al -12 \sum_{\al\not\perp \be } \rho_\be^2 e_\al\right)
 \\
 =& \frac{1}{5}\left( \frac{1}{15}\om_{K_{12}} + \frac{1}{45} \sum_{\al\perp \be } e_\al - \frac{2}{45}  \sum_{\al\not\perp \be } \rho_\be^2 e_\al\right) .
\end{split}
\]
Therefore,
\[
  \sigma_u(\rho_\be u) =u+\rho_\be u-5u_1(\rho_\be u) = \rho_\be^2 u
\]
as desired.
\end{proof}

\begin{thm}
Let $E=E_1\cup E_2$ and $G=\la \sigma_e \mid e\in E\ra< \aut(V_{K_{12}}^{\hat{\au}})$ . Then we have $G\cong
{^+\Omega^-}(8,3)$.
\end{thm}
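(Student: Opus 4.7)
The plan is to identify $(G,D)$ with the center-free $3$-transposition group $({^+\Omega^-}(8,3),D_+)$, where $D_+$ is the class of $+$-type reflections on a minus-type $\F_3$-quadratic space of dimension $8$, via the correspondence $\sigma_e\leftrightarrow r_{\la v\ra}$ matching $\sigma$-involutions with reflections at non-isotropic $+$-lines.

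First, I would verify that $D=\{\sigma_e:e\in E\}$ is a class of $3$-transpositions in $G$ by applying Theorem \ref{thm:3.4}. Within $E_1$ this was already established in the computation of $\bar H\cong 3^6{:}{^+\Omega^-}(6,3)$; across $E_1\times E_2$ the inner product $(\rho_\be u\,|\,e_A)$ lies in $\{0,1/25\}$ by the earlier lemma; within $E_2$, the equivariance $\sigma_{\rho_f u}=\rho_f\sigma_u\rho_f^{-1}$ reduces the question to $(u\,|\,\rho_\be u)$ for $\be$ of norm $4$, $6$, or $8$ in $K_{12}$, which was computed above. The injectivity and center-freeness results of Section \ref{sec:3.2.3} then give $|D|=|E|=3\cdot 126+3^6=1107$ and that $G$ is center-free on the sub VOA $\la E\ra$.

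Next, I would produce a surjective homomorphism $\pi\colon G\twoheadrightarrow {^+\Omega^-}(8,3)$. The subgroup $\bar H\cong 3^6{:}{^+\Omega^-}(6,3)$ acts faithfully on $\F_3^6=K_{12}/(1-\au)K_{12}$ with its natural minus-type form, sending $\sigma_{\rho_\al e_A}$ to $t_A$; I extend this to an $8$-dimensional minus-type space by an orthogonal $2$-plane of the appropriate discriminant, on which the new involutions $\sigma_{\rho_f u}$ act as $+$-type reflections. The precise extension is dictated by Lemmas \ref{sigmaea}, \ref{sigmaea2}, \ref{norm4uu}, \ref{uu} and Remark \ref{8uu}, whose conjugation formulas $\sigma_{e_A}(\rho_\be u)=\rho_{t_A\be}u$, $\sigma_u(\rho_\be u)=\rho_\be^{-1}u$, and so on, encode the required relations. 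A direct count shows this space carries exactly $1107$ $+$-type reflections, matching $|D|$, so the correspondence is a bijection on transposition classes and $\pi$ is surjective.

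The main obstacle is to upgrade $\pi$ to an isomorphism. Center-freeness of $G$ on $\la E\ra$ prevents a kernel element from acting trivially on $D$, but one still needs to rule out $\pi$ collapsing two distinct transpositions. This is ensured by Fischer's classification of center-free $3$-transposition groups: the presence of $\bar H\cong 3^6{:}{^+\Omega^-}(6,3)$ as a parabolic together with the transposition count $1107$ leaves ${^+\Omega^-}(8,3)$ as the only possibility. The most delicate step is verifying that the orbit structure of $G$ on $D\times D$ — the pattern of commuting versus $\mathrm{S}_3$-generating pairs — precisely mirrors the orthogonality/non-orthogonality pattern of $+$-lines in the $\F_3^8$-form, which follows from the conjugation formulas above applied to representative pairs from each orbit.
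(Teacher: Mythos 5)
Your overall strategy coincides with the paper's: decompose the minus-type space $\F_3^8$ as a hyperbolic plane orthogonal to $K_{12}/(1-\au)K_{12}$, match $E=E_1\cup E_2$ with the $1107$ lines of $+$-type, let the conjugation formulas of Lemmas \ref{sigmaea2}, \ref{norm4uu}, \ref{uu} and Remark \ref{8uu} carry the identification, and use $\bar H\cong 3^6{:}{}^+\Omega^-(6,3)$ to pin down the group. Two points, however, do not hold up as written. First, $\bar H$ does \emph{not} act faithfully on $\F_3^6=K_{12}/(1-\au)K_{12}$: the normal subgroup $3^6=\la \rho_\al\mid \al\in K_{12}(4)\ra$ is precisely the kernel of $\varphi$, so it acts trivially there. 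That subgroup only acquires a faithful linear action after passing to the $8$-dimensional space (where $3^6{:}{}^+\Omega^-(6,3)$ is a point stabilizer with the $3^6$ as unipotent radical), so the faithful $6$-dimensional representation you propose to extend does not exist.

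Second, and more substantively, the surjection $\pi\colon G\twoheadrightarrow{}^+\Omega^-(8,3)$ is asserted but never constructed. Since $G$ lives inside $\aut(V_{K_{12}}^{\hat{\au}})$, mapping it \emph{onto} the orthogonal group requires exhibiting a linear action of every element of $G$ on $\F_3^8$; ``extending by an orthogonal $2$-plane'' does not say how an arbitrary automorphism in $G$ acts on that plane, and defining it generator-by-generator requires knowing the relations of $G$ --- which is what you are trying to determine. What the equivariant bijection $E\leftrightarrow\{+\text{-lines}\}$ gives for free is only the permutation action of $G$ on the lines, whose target is ${}^+\Omega^-(8,3)$ modulo the kernel of its action on lines (a central subgroup contained in $\la\pm 1\ra$), so a priori you control only a central quotient of the answer. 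The paper runs the map in the opposite direction, $\tilde\varphi\colon{}^+\Omega^-(8,3)\to G$, $r_x\mapsto\sigma_{\varphi(x)}$: well-definedness reduces to the rank-two relations among reflections, which is exactly what the conjugation lemmas verify, and the image is all of $G$ with injectivity forced by the maximality of $\bar H$ together with $\sigma_u\notin\bar H$. Your fallback through the Cuypers--Hall classification can in principle close the gap, but it then requires checking that no other group on the list has $1107$ transpositions and contains $3^6{:}{}^+\Omega^-(6,3)$ in the required way --- considerably more work than the paper's maximality argument, and not carried out in your sketch.
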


\begin{proof}
Let $X\cong \mathbb{F}_3^8$ be a nondegenerate quadratic space of $(-)$-type. Write $X=U\oplus Y$, where $U$ is a hyperbolic
plane and $Y$ is a 6-dimensional non-degenerate quadratic space of $(-)$-type. We shall identify $Y$ with $K_{12}/(1-\au)
K_{12}$.

Note that $U$ has exactly two isotropic lines, say $\la v_0\ra$ and $\la v_0'\ra$, two non-singular vectors $v_1$, $-v_1$ of norm $1$ and two non-singular vectors $v_2$, $-v_2$ of norm $2$ (or $-1$).

Now let $x= v+y\in U\oplus Y$ be a non-singular vector of norm 1 and let $\la x\ra= \Span_{\mathbb{F}_3}\{ x\}$ be the line spanned by $x$.

If $v=0$, then $y$ has norm $1$ and it is represented by a norm $4$ vector $\be \in K_{12}$, i.e., $y= \be +(1-\au)K_{12}\in
K_{12}/(1-\au) K_{12}$. In this case, we assign $\la x\ra $ to $e_{A(\be)}$, where $A(\be)= \Span_{\Z}\{\be , \au\be\}\cong
\sqrt{2}A_2$.

If $v=v_0$ is isotropic, then $y$ is again represented by a norm $4$ vector $\be \in K_{12}$. We assign $\la x\ra $ to $\rho_\be e_{A(\be)}$.

If $v=v_0'$, then $y$ is represented by a norm $4$ vector $\be \in K_{12}$ and we assign $\la x\ra $ to $\rho_\be^{-1} u $.

If $v=v_1$, then $y$ is isotropic and is represented by a norm $6$ vector $\gamma$. In this case, we assign  $\la x\ra $ to $\rho_\gamma u $.

If $v=v_2$, then $y$ has norm $-1$ and is represented by a norm $8$ vector $\delta$. In this case, we  assign  $\la x\ra $ to $\rho_\delta u $.

Then, by Remark \ref{8uu} and  Lemmas \ref{norm4uu} and \ref{uu}, the assignment above defines an ${^+\Omega^-}(8,3)$-map
\[
  \varphi: \left\{ 1\text{-spaces generated}\atop \text{by norm 1 vectors}\right \} 
  \longto E.
\]
It induces a group homomorphism
\[
\begin{array}{cccc}
  \tilde{\varphi}: &{^+\Omega^-}(8,3) &\longto &G
  \\
  &r_x &\longmapsto& \sigma_{\varphi(x)}.
\end{array}
\]
Recall that $G$ contains a subgroup $\bar{H}=\la \sigma_e\mid e\in E_1\ra  \cong 3^6{:} {^+\Omega^-}(6,3)$, which is a maximal
subgroup of ${^+\Omega^-}(8,3)$ and $\sigma_u $ is non-trivial.  Therefore, we must have  $G\cong {^+\Omega^-}(8,3)$.

It is also easy to show that  the assignment $\varphi$ is one-to-one and onto. Hence the group homomorphism $\tilde{\varphi}$ is in
fact an isomorphism.
\end{proof}

\appendix
\section{An explicit definition for $c=4/5$ Virasoro vectors in $V_{K_{12}}^{\hat{\au}}$ }
Next we will give an explicit definition for $c=4/5$ Virasoro vectors in $V_{K_{12}}^{\hat{\au}}$. First let  $\hat{\au}$ be a lift of
$\au$ in $V_{K_{12}}$.

\begin{lem}\label{Kv}
Let $\mathcal{A}$ be the set of all $\au$-invariant $\sqrt{2}A_2$ sublattices 
in $K_{12}$ and let $e_A, A\in \mathcal{A}$ be defined as in Lemma \ref{eA}. 
Then the element
\[
  \tilde{u} =\frac{1}{9} \sum_{A\in \mathcal{A}} e_A
\]
is a Virasoro vector of central charge $56/5$ in $V_{K_{12}}^{\hat{\au}}$. 
Therefore, $\om_{K_{12}}- \tilde{u}$ is a Virasoro vector of central charge $4/5$ 
in $V_{K_{12}}^{\hat{\au}}$.
\end{lem}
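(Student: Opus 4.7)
The inclusion $\tilde u\in V_{K_{12}}^{\hat\au}$ is immediate from Lemma \ref{eA}, so the task is to verify the Virasoro axioms $\tilde u_{(1)}\tilde u = 2\tilde u$ and $\tilde u_{(3)}\tilde u = (c_{\tilde u}/2)\vac$ with $c_{\tilde u}=56/5$, together with the fact that $\om_{K_{12}}-\tilde u$ is then a commuting Virasoro vector of central charge $12-56/5 = 4/5$. Both axioms reduce to a case analysis of $(e_A)_{(1)}e_B$ and $(e_A|e_B)$ over ordered pairs $(A,B)\in \mathcal{A}\times \mathcal{A}$, using $|\mathcal{A}|=126$ (since $|K_{12}(4)|=756$ and each $\au$-invariant $\sqrt{2}A_2$-sublattice contains $6$ norm-$4$ vectors).

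For $A=B$, the $c=4/5$ Virasoro axioms give $(e_A)_{(1)}e_A = 2 e_A$ and $(e_A|e_A)=2/5$. For $A\ne B$ with $A\perp B$, the sub-VOAs $V_A$ and $V_B$ lie in orthogonal Heisenberg factors and so commute, forcing $(e_A)_{(1)}e_B = 0$ and $(e_A|e_B)=0$. For $A\ne B$ non-orthogonal, Lemma \ref{sigmaea} (with $\be=0$) yields $\sigma_{e_A}(e_B) = e_{t_A B}$; since $B$ is $\au$-invariant of type $\sqrt{2}A_2$, no proper sublattice of $B$ is $\au$-stable, so the only ways $t_A B=B$ can occur are $B=A$ or $B\perp A$, neither of which holds. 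Hence $e_{t_A B}\ne e_B$, which excludes Case (1) of Theorem \ref{thm:3.4}, and Case (2) gives $(e_A|e_B)=1/25$ and $(e_A)_{(1)}e_B = \sfr{1}{5}(e_A+e_B-e_{t_A B})$. By Corollary \ref{orthA}, each $A$ has exactly $80$ such non-orthogonal partners, and $B\mapsto t_A B$ is a fixed-point-free involution on this set. Therefore
\[
  \sum_{B\in \mathcal{A}}(e_A)_{(1)}e_B
  = 2 e_A + \sum_{\substack{B\not\perp A \\ B\ne A}}\sfr{1}{5}(e_A+e_B-e_{t_A B})
  = 2 e_A + \sfr{80}{5}e_A = 18 e_A,
\]
where the $\sum e_B$ and $\sum e_{t_A B}$ contributions cancel via the involution. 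Dividing by $81$ and summing over $A$ gives $\tilde u_{(1)}\tilde u = 2\tilde u$.

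For the invariant form,
\[
  \tilde u_{(3)}\tilde u
  = \sfr{1}{81}\sum_{A,B}(e_A|e_B)\vac
  = \sfr{1}{81}\l(126\cd \sfr{2}{5} + 126\cd 80\cd \sfr{1}{25}\r)\vac
  = \sfr{28}{5}\vac,
\]
giving $c_{\tilde u}=56/5$. For the final statement, one invokes the standard criterion: in an OZ-type VOA with $V_1=0$ and conformal vector $\om$ of central charge $c$, any Virasoro vector $\tilde u\in V_2$ satisfying $(\om|\tilde u)=c_{\tilde u}/2$ has $\om-\tilde u$ a mutually commuting Virasoro vector of central charge $c-c_{\tilde u}$ (direct expansions of $(\om-\tilde u)_{(1)}(\om-\tilde u)$ and $(\om-\tilde u)_{(3)}(\om-\tilde u)$, using $\om\cd\tilde u = 2\tilde u$ since $\tilde u\in V_2$). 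To verify the hypothesis, decompose the Heisenberg as $\om_{K_{12}}=\om_A+\om_{(\C A)^\perp}$; then $(\om_{(\C A)^\perp}|e_A)=0$ as these live in orthogonal Heisenberg factors, while $(\om_A|e_A)=c_{e_A}/2=2/5$ since $\om_A-e_A$ is exactly the $c=6/5$ Virasoro vector $v^0$ in $V_A\cong V_{\sqrt{2}A_2}$ from Lemma \ref{c45}. Hence $(\om_{K_{12}}|\tilde u)=\sfr{1}{9}\cd 126\cd \sfr{2}{5}=\sfr{28}{5}$, so $\om_{K_{12}}-\tilde u$ is a Virasoro vector of central charge $12-56/5=4/5$. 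The main obstacle is justifying that the third vertex $e_{t_A B}$ of the triple $\{e_A,e_B,e_{t_AB}\}$ arising from Case (2) of Theorem \ref{thm:3.4} again lies in the family $\{e_C:C\in \mathcal{A}\}$; this is what allows the involutive pairing to telescope, and is provided by Lemma \ref{sigmaea} together with the $\au$-invariance of $t_A B$.
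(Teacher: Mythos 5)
Your proof is correct and takes essentially the same route as the paper's: both reduce $\tilde u_{(1)}\tilde u$ and $(\tilde u|\tilde u)$ to the case analysis of Theorem \ref{thm:3.4} over pairs in $\mathcal{A}$, use $|\mathcal{A}|=126$ and the count of $80$ non-orthogonal partners, and let the $e_B$ and $\sigma_{e_A}e_B=e_{t_AB}$ terms cancel. Your extra care in excluding Case (1) for non-orthogonal pairs and in checking the involutive pairing $B\mapsto t_AB$ only makes explicit what the paper leaves implicit.
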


\begin{proof}
First we note that there are $126$ $\au$-invariant $\sqrt{2}A_2$-sublattices 
in $K_{12}$. 
By Theorem \ref{thm:3.4}, for any $A$, $B\in \mathcal{A}$, we have
\[
  (e_A)_{(1)} e_B =
  \begin{cases}
    2e_A& \text{ if } A=B, 
    \vsb\\
    0& \text{ if } A\perp B,
    \vsb\\
    \dfrac{1}{5}( e_A + e_B -\sigma_{e_A}e_B) & \text{ otherwise.}
  \end{cases}
\]
Note also that for any $A\in \mathcal{A}$, there exists exactly $80$ $\au$-invariant $\sqrt{2}A_2$-sublattices $B\in \mathcal{A}$
such that $A\neq B$ and $A\not\perp B$.

Therefore,
\[
\begin{split}
  \tilde{u}_{(1)}\tilde{u} &= \frac{1}{81} \l(\sum_{A\in \mathcal{A}}e_A\r)_{(1)}
  \l(\sum_{B\in \mathcal{A}}e_B\r)
  = \frac{1}{81}\sum_{A\in \mathcal{A}}\left( 2e_A + (e_A)_{(1)}\l(\sum_{B\neq A} e_B\r)\right)
  \\
  &= \frac{1}{81}\sum_{A\in \mathcal{A}}\left( 2e_A + \sum_{B\not\perp A} \frac{1}{5}(e_A+e_B- \sigma_{e_A}e_B)\right)
  = \frac{1}{81}\sum_{A\in \mathcal{A}}\l( 2e_A+\frac{80}{5}e_A\r)
  \\
  &= \frac{2}{9} \sum_{A\in \mathcal{A}} e_A =2\tilde{u},
\end{split}
\]
and
\[
\begin{array}{ll}
  (\tilde{u}|\tilde{u}) 
  &= \dfrac{1}{81} \l(\dsum_{A\in \mathcal{A}}e_A \,\Bigg|\, \dsum_{B\in \mathcal{A}}e_B\r)
  = \dfrac{1}{81}\dsum_{A\in \mathcal{A}}\left ( (e_A | e_A) + \dsum_{B\neq A}(e_A | e_B) \right)
  \vsb\\
  &= \dfrac{1}{81}\cdot 126 \cdot \l(\dfrac{2}5 +\dfrac{1}{25} \cdot 80\r) =\dfrac{28}{5}.
\end{array}
\]
Since $e_A$ is fixed by $\hat{\au}$ for any $A\in \mathcal{A}$, both $\tilde{u}$ and $\om_{K_{12}}- \tilde{u}$ are fixed by
$\hat{\au}$.
\end{proof}

\begin{rem}
Note that the definition of $e_A$ depends on  the choice of the lift $\hat{\au}$. Hence the definition of $u$ also depends on  the
choice of the lift $\hat{\au}$
\end{rem}

\medskip

Next we will show that the Virasoro vector $u$ defined in Remark 
\ref{defu} agrees with $\om_{K_{12}}- \tilde{u}$. 
Let $M$, $N$, $F=M\cap N$ and $J=\ann_{M+N}(F)\cong K_{12}$ be defined as 
in Notation \ref{Q} and let $\hat{\au}=\tau_{e_M}\tau_{e_N}$.

As in the proof of Lemma \ref{inK12}, set $S^1=J\cap M$, $S^2= J\cap N$ 
and $S^3=\au S^2$. 
Denote $T^1= M\setminus (F\perp S^1)$, $T^2=M\setminus (F\perp S^2)$, 
$T^3=\au T^2$ and $W=J\setminus (S^1+S^2)$. 
Then
\[
\begin{split}
  e_M &= \frac{1}{16}(\om_F+ \om_{S^1}) +\frac{1}{32}\left(
  \sum_{\al\in F(4)} \ee^\al + \sum_{\al\in S^1(4)} \ee^\al 
  +\sum_{\al\in T^1(4)} \ee^\al\right),
  \\
  e_N&= \frac{1}{16}(\om_F+ \om_{S^2}) +\frac{1}{32}\left(
  \sum_{\al\in F(4)} \ee^\al + \sum_{\al\in S^2(4)} \ee^\al 
  +\sum_{\al\in T^2(4)} \ee^\al\right).
\end{split}
\]
By direct calculation, we have
\[
  (\om_F+ \om_{S^1})_{(1)}(\om_F+ \om_{S^2})
  = 2\om_F+ \frac{1}{2}(\om_{S^1} +\om_{S^2}-\om_{S^3}),
\]
\[
  (\om_F+ \om_{S^1})_{(1)} \left(\sum_{\al\in F(4)} \ee^\al + \sum_{\al\in S^2(4)} 
  \ee^\al +\sum_{\al\in T^2(4)} \ee^\al\right)
  = 2 \sum_{\al\in F(4)} \ee^\al + \frac{1}{2} \sum_{\al\in S^2(4)} \ee^\al 
  + \sum_{\al\in T^2(4)} \ee^\al,
\]
\[
  (\om_F+ \om_{S^2})_{(1)} \left(
  \sum_{\al\in F(4)} \ee^\al + \sum_{\al\in S^1(4)} \ee^\al 
  +\sum_{\al\in T^1(4)} \ee^\al\right)
  = 2 \sum_{\al\in F(4)} \ee^\al + \frac{1}{2} \sum_{\al\in S^1(4)} \ee^\al 
  + \sum_{\al\in T^1(4)} \ee^\al,
\]
and
\[
\begin{split}
&\ \left(
  \sum_{\al\in F(4)} \ee^\al + \sum_{\al\in S^1(4)} \ee^\al 
  +\sum_{\al\in T^1(4)} \ee^\al\right)_{(1)}
  \left(\sum_{\al\in F(4)} \ee^\al + \sum_{\al\in S^2(4)} \ee^\al 
  +\sum_{\al\in T^2(4)} \ee^\al\right)
  \\
  =& 12\om_F + 2 \sum_{\al\in F(4)} \ee^\al +2\sum_{\al\in T^1(4)} \ee^\al 
  +2 \sum_{\al\in T^2(4)} \ee^\al + \sum_{\al\in S^3(4)} \ee^\al 
  + 2\sum_{\al\in T^3(4)} \ee^\al + 3\sum_{\al\in W(4)} \ee^\al.
\end{split}
\]
Hence,
\[
\begin{split}
  u & =\frac{2^6}{135} (2e_M +2e_N + \hat{\au}(e_N) -16(e_M)_{(1)}(e_N))
  \\
  &=\frac{1}{45} \left( 2(\om_{S^1}+\om_{S^2}+\om_{S^3})
  +\sum_{\al\in S^1(4)} \ee^\al+\sum_{\al\in S^2(4)} \ee^\al 
  -\sum_{\al\in S^3(4)} \ee^\al  -\sum_{\al\in W(4)} \ee^\al\right)
  \\
  &= \frac{1}{15} \om_{K_{12}} + \frac{1}{45}\left( \sum_{\al\in S^1(4)} \ee^\al
  +\sum_{\al\in S^2(4)} \ee^\al -\sum_{\al\in S^3(4)} \ee^\al  
  -\sum_{\al\in W(4)} \ee^\al\right) .
\end{split}
\]


\begin{lem}\label{cocyle}
Let $\al, \be$ be norm $4$ vectors.
\\
(1) Suppose that $\al, \be \in S^3(4)$ and $(\al, \be )=-2$. 
Then  we have $\ee^\al \ee^\be = -\ee^{\al+\be}$.
\\
(2) Suppose $\al \in S^1(4)$. 
Then $\hat{\au}(\ee^\al) = \ee^{\au\al}$ and $\hat{\au}^2(\ee^\al) = -\ee^{\au^2\al}$.
\\
(3) Let $\al\in W(4)$. Then  $\varepsilon(\al , \au\al) \equiv 0 \mod 2$.
\\
(4) Let $\al, \be\in W(4)$ such that $\al+\be\in W(4)$. 
Then $\varepsilon(\al, \be)\equiv 1 \mod 2 $ unless $\be \in \Span_\Z\{ \al, \au\al\}$.
\end{lem}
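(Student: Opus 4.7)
The plan is to reduce all four assertions to explicit cocycle and involution computations in $V_Q$ using the formula $\varepsilon(\al,\be) = (-1)^{(\al_2,\be_1)}$ from Notation \ref{cocycle} (with $\al = \al_1 + \al_2$, $\be = \be_1+\be_2$ and $\al_1,\be_1 \in M$, $\al_2,\be_2 \in N$) together with the identification $\hat{\au} = \tau_{e_M}\tau_{e_N}$. The structural input I would repeatedly invoke is that for any $\gamma\in N$, the identity $\au\gamma = -t_M\gamma = 2\pi_M(\gamma) - \gamma$ supplies the $M\oplus N$-decomposition of $\au\gamma$: the $M$-part is $2\pi_M(\gamma)\in M$ (lying in $M$ thanks to the RSSD property of Definition \ref{rssd}) and the $N$-part is $-\gamma$; symmetrically for $\al\in M$ via $\au^{-1}=t_Nt_M$. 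This pins down the decomposition of every element of $S^1\cup S^2\cup S^3$, and for $\al\in W(4)$ it extends via the coset representatives of $S^1\perp S^2$ in $J$ coming from the index-three extensions $F\perp S^1\subset M$ and $F\perp S^2\subset N$.

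For part~(1), writing $\al = \au\gamma$ and $\be = \au\delta$ with $\gamma,\delta\in S^2\subset N$, the above decomposition gives $\al_N = -\gamma$ and $\be_M = 2\pi_M(\delta)$, so
\[
  \varepsilon(\al,\be) = (-1)^{(\al_N,\be_M)} = (-1)^{-2(\gamma,\pi_M\delta)}.
\]
Using $(\al,\be)=(\gamma,\delta)=-2$ together with the explicit Gram data for $\dih{6}{14}$ and the $\sqrt{2}A_2$-structure of $F$, one checks that the exponent is odd, giving $\varepsilon(\al,\be) = -1$. For part~(2), since $V_M\cong V_{\sqrt{2}E_8}$ contains no $L(\sfr{1}{2},\sfr{1}{16})$-component with respect to $\vir(e_M)$, $\tau_{e_M}$ acts trivially on $V_M$, hence $\hat{\au}(\ee^\al)=\tau_{e_N}(\ee^\al)$ for $\al\in S^1\subset M$. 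Evaluating $\tau_{e_N}$ on $\ee^\al$ through its action on $V_Q$ as a $\vir(e_N)$-module (which sends $\ee^\al$ to $\pm\ee^{\au\al}$) and determining the sign via the $M\oplus N$-decomposition yields $+1$; iterating once more and picking up the cocycle sign from the second application produces the explicit $-1$ in $\hat{\au}^2(\ee^\al)=-\ee^{\au^2\al}$.

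For parts~(3) and~(4), the same $M\oplus N$-decomposition method applies to $\al\in W(4)$: one computes $(\au\al)_M = -\al_M + 2\pi_M(\al_N)$ and $(\au\al)_N = 2\pi_N(\al_M) - \al_N$, so (3) becomes a parity check of $(\al_N,-\al_M+2\pi_M(\al_N))$, which splits as $-(\al_N,\al_M) + 2\|\pi_M(\al_N)\|^2$; the second term lies in $\Z$ (since $2\pi_M(\al_N)\in M$ and $M$ is even), and the first is even precisely for $\al\in W$ by the coset description. Part~(4) splits into the two subcases $\be\in\Span_\Z\{\al,\au\al\}$ and its complement, and the cocycle is evaluated separately in each. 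The main obstacle will be the bookkeeping in (4): identifying exactly which pairs $(\al,\be)$ in $W(4)$ with $\al+\be\in W(4)$ satisfy $\be\in\Span_\Z\{\al,\au\al\}$ requires a careful analysis of the $\au$-orbit triangles inside $W(4)$, and verifying that the cocycle gives the asserted parities in each subcase relies on the concrete $\dih{6}{14}$-description from \cite{GL}.
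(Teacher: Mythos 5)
Your overall strategy---reducing everything to explicit evaluations of $\varepsilon(\al,\be)=(-1)^{(\al_2,\be_1)}$ via $M$- and $N$-decompositions---is the same one the paper uses for parts (1), (3) and (4), but the execution contains an outright error and leaves the decisive computations undone. The error is in part (2): $\tau_{e_M}$ does \emph{not} act trivially on $V_M$. The decomposition of $V_M\cong V_{\sqrt{2}E_8}$ with respect to $e_M$ has a nonzero $L(\sfr{1}{2},\sfr{1}{16})$-component, and $\tau_{e_M}$ restricted to $V_M$ is a lift of $-1\in O(M)$, i.e.\ $\tau_{e_M}\ee^\al=\ee^{-\al}$; this is precisely what the paper uses to get $\hat{\au}^2\ee^\al=\tau_{e_N}\ee^{-\al}=-\ee^{\au^2\al}$, so your premise would produce the wrong answer. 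Even granting it, $\hat{\au}\ee^\al=\tau_{e_M}\bigl(\tau_{e_N}\ee^\al\bigr)$ and $\tau_{e_N}\ee^\al$ no longer lies in $V_M$, so the outer $\tau_{e_M}$ cannot be dropped; and the intermediate target is wrong: $\tau_{e_N}$ sends $\ee^\al$ to $\pm\ee^{t_N\al}=\pm\ee^{-\au^2\al}$, not to $\pm\ee^{\au\al}$ (the paper fixes the sign by splitting $\ee^\al=\tfrac12(\ee^\al+\ee^{-\au^2\al})+\tfrac12(\ee^\al-\ee^{-\au^2\al})$ into $(e_N)_{(1)}$-eigenvectors with eigenvalues $1/16$ and $0$). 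A related slip infects your decomposition formulas: for $\al\in M$ one has $2\pi_N(\al)-\al=-t_N\al=t_Nt_M\al=\au^{-1}\al$, not $\au\al$, so your expression for the $M$-part of $\au\al$ on mixed vectors is not a valid $M+N$-decomposition (recall also that $Q=M+N$ is not a direct sum, so one must exhibit an actual decomposition as the paper does).

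For the remaining parts the crucial parity checks are deferred rather than proved, and they are the entire content of the lemma. In (1) you reduce to the oddness of $2(\gamma,\pi_M\delta)$ and appeal to ``explicit Gram data''; the paper instead factors $\ee^\al=\ee^{\al'}\ee^{\au\al'}$ with $\al'\in S^1$ and reads the sign off a single commutation $(-1)^{(\au\al',\be')}$. In (3) the paper writes $\al=x+\au y$ with $x=a+b$, $y=-a-b'$, $a\in(M\cap N)^*$, $b,b'\in(S^1)^*$ coming from the index-three coset structure of $F\perp S^1$ in $M$, and the result is the exact cancellation $-(a,a)-(\au b',b')=-\sfr{4}{3}+\sfr{4}{3}=0$; the individual terms are fractional, so your split into ``$-(\al_N,\al_M)$ plus a term in $\Z$'' does not settle the parity (and ``lies in $\Z$'' is in any case insufficient for a mod $2$ argument---you would need evenness, which here comes from $M$ being doubly even). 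Part (4) is the bulk of the proof: a case analysis on $(a,c)\equiv\pm\sfr{2}{3}\pmod 2$ and on $(b,d')$, $(b',d)$ showing that $\varepsilon(\al,\be)\equiv 0$ forces either $\be\in\Span_\Z\{\al,\au\al\}$ or a contradiction with $\al+\be\in W$. You correctly identify this as the main obstacle but do not carry it out, so as written the proposal establishes none of the four assertions.
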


\begin{proof}
(1):~Let $\al, \be \in S^3(4)$. 
Then there exist $\al', \be'\in S^1(4)$ such that $\al =\al'+\au\al'$, 
$\be=\be'+\au\be'$ and $(\al,\be)=(\al',\be')$.  
In this case, we have
\[
  \ee^\al \ee^\be 
  = \ee^ {\al'} \ee^{\au\al'} \ee^{\be'}\ee^{\au\be'}
  = (-1)^{ (\au\al', \be') } \ee^ {\al'} \ee^{\be'} \ee^{\au\al'}\ee^{\au\be'}
  = - \ee^{\al'+\be' +\au(\al'+\be')} =- \ee^{\al+\be}.
\]
(2):~First we note that $\ee^\al= \frac{1}{2}\left( (\ee^\al+ \ee^{-\au^2\al}) 
+ (\ee^{\al}- \ee^{-\au^2\al})\right)$. 
For $\alpha \in S^1(4)$,  we have
\[
(e_N)_{(1)} (\ee^\al+ \ee^{-\au^2\al}) = \frac{1}{16} (\ee^\al+ \ee^{-\au^2\al})\quad  \text{ and } \quad
(e_N)_{(1)} (\ee^\al- \ee^{-\au^2\al})=0.
\]
Recall that 
$$
  e_N= \frac{1}{16} \om_N + \frac{1}{32} \sum_{\be \in N(4)} \ee^\be .
$$ 
Moreover, $(\be,\al) =-2$ if and only if $\be=\au \al$, and $(\be , -\au^2 \al)=-2$ 
if and only if $\be=-g\al$. 
Thus, we have
\[
\begin{split}
  \tau_{e_N}\ee^\al
  &= \tau_{e_N} \l(\frac{1}{2}\left( (\ee^\al+ \ee^{-\au^2\al}) + (\ee^{\al}- \ee^{-\au^2\al})\right)\r)
  \\
  &= \frac{1}{2}\left( -(\ee^\al+ \ee^{-\au^2\al}) + (\ee^{\al}- \ee^{-\au^2\al})\right)
  \\
  &= - \ee^{-\au^2\al}.
\end{split}
\]

By the same argument, we also have
\[
  \tau_{e_M}\ee^{-\au^2\al} = - \ee^{\au\al}.
\]
Hence, $\hat{\au} \ee^\al = \tau_{e_M}\tau_{e_N} \ee^\al = -\tau_{e_M} \ee^{\au^2\al}
=\ee^{\au\al}$ and
\[
  \hat{\au}^2 \ee^\al 
  = \tau_{e_N}\tau_{e_M} \ee^\al 
  = \tau_{e_N} \ee^{-\al}
  =  - \ee^{\au^2\al}.
\]
as desired.
\\
(3):~Let $\al\in W(4)$. 
Then $\al=x+\au y$ for some $x,y\in M(4)$, where $x=a+b$ and $y =-a-b'$ with 
$a\in (M\cap N)^*$, $b,b'\in (S^1)^*$ and $b-b'\in S^1$. 
Then $\au \al= \au x+\au^2 y = a+\au b -a+\au^2 b'= (a+b')+ (- a+\au(b+b'))$ and thus
\[
  \varepsilon(\al,\au\al) 
  =(\au y, a+b') 
  = (-a-\au b', a+b')
  = -\frac{4}3+\frac{1}2\cdot \frac{8}3
  = 0.
\]
(4):~Let $\al=x+\au y $ and $\be =x'+\au y'$ such that
$$
  x=a+b,~~~
  y= -a- b',~~~
  x'=c+d,~~~ 
  y'=-c-d',
$$
and $(b,b')=(d, d')=-4/3$.
Then $\al+\be \in W(4)$ implies $(\al, \be)=-2$ and
\[
  (b,d) +(b',d') + \frac{1}{2}  ((b,d') +(b',d)) =-2.
\]
Since $x,x'y,y'\in M$, which is doubly even, we  have $(x,x')$, $(y, y') \in 2\Z$ 
and thus
\[
  (b,d) \equiv (b',d') \equiv -(a,c)  \mod 2.
\]
If we further assume $\varepsilon(\al, \be) \equiv (x, \au y') \equiv (x', \au y) 
\equiv 0 \mod 2$, then we also have
\[
  \frac{1}{2} (b, d') \equiv (a,c) \equiv \frac{1}{2} (b',d)  \mod 2.
\]
Note that $(a,c) \equiv \pm 2/3 \mod 2$ and $|(a, c)|\leq 4/3$.

\medskip

\textbf{Case 1.}~Suppose $(a,c) \equiv 2/3\mod 2 $. 
Then $a+c$ has integral norm and hence $a+c\in M\cap N$. 
Therefore, $x+x'$ and $y+y'$ are contained in $M\cap N +S^1$ and thus 
$\al +\be = x+x' + \au (y+y') \in S^1+S^2$, which contradicts our 
assumption that $\al+\be \in W$.

\medskip

\textbf{Case 2.}~Suppose $(a,c) \equiv -2/3\mod 2$. 
Then $(b,d')/2 = 4/3 $ or $-2/3$.
If $(b,d')/2 = 4/3$, then $(b, d') =8/3$ and hence $b=d'$. 
Thus
\[
  (d',d) +(b', b) + \frac{1}{2}  \l( \frac{8}{3} +(b',d)\r) =-2
\]
and we have $(b', d) = -4/3$. 
Note that $(b,b')=(d, d')=-4/3$. 
Therefore, $(d, b+b')= -8/3$ and $d= -( b+b')$.
Hence, $\be = -(b+b') -\au b = \au^2 \al$.

Similarly, if $(b',d)/2 = 4/3$, we also have $\be \in \Span_\Z\{ \al, \au\al\}$.

\medskip

Now suppose $(b,d')/2= (b',d)/2 = -2/3$. 
Then we have
\[
  (b,d) +(b', d')  - \frac{4}{3} 
  = -2 \quad  \text{ and  } \quad (b,d) +(b', d') 
  = -\frac{2}{3}.
\]
Since $(b, d)=(b',d') \equiv -(a,c) \equiv 2/3 \mod 2$, we have
\[
  (b,d)= -\frac{4}{3},~~ (b', d') =\frac{2}{3}
  \quad  \text{ or } \quad 
  (b,d)= \frac{2}{3},~~ (b', d') =-\frac{4}{3}.
\]
Without loss of generality, we assume $(b,d)= -4/3$ and $(b', d')=2/3$. 
Then
\[
  (d,b+b') = -\frac{8}{3} \quad \text{  and } \quad (b,d+d') =-\frac{8}{3}
\]
and hence $-d= b+b'$ and $-b= d+d'$. 
This implies $b'=d'= -(b+d)$.  
This is a contradiction since $(b', d')=2/3$.
\end{proof}

\begin{prop}
We have $u=\om_{K_{12}} - \frac{1}{9}\sum_{A\in \mathcal{A}} e_A $.
\end{prop}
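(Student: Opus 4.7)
The plan is to match the right-hand side $\om_{K_{12}}-\fr{1}{9}\sum_{A\in\mathcal{A}}e_A$ against the explicit form of $u$ derived immediately above the proposition, namely
\[
u = \fr{1}{15}\om_{K_{12}} + \fr{1}{45}\l(\dsum_{\al\in S^1(4)}\ee^\al + \dsum_{\al\in S^2(4)}\ee^\al - \dsum_{\al\in S^3(4)}\ee^\al - \dsum_{\al\in W(4)}\ee^\al\r).
\]
Substituting the formula from Lemma \ref{eA} and using that the $126$ sublattices in $\mathcal{A}$ partition the $756$ norm-$4$ vectors of $K_{12}$ into disjoint sextuples $A(4)$, one rewrites
\[
\om_{K_{12}}-\fr{1}{9}\dsum_{A\in\mathcal{A}} e_A = \om_{K_{12}} - \fr{2}{45}\dsum_{A\in\mathcal{A}} \om_A - \fr{1}{45}\dsum_{\al\in K_{12}(4)} (-1)^{i(A(\al))}\,\ee^\al,
\]
so the task reduces to matching the two terms on the right separately against the two summands of $u$.

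For the conformal contribution, I would establish $\dsum_{A\in\mathcal{A}}\om_A = 21\,\om_{K_{12}}$. The $\sqrt{2}A_2$ Gram matrix yields $\om_A = \fr{1}{24}\dsum_{\al\in A(4)}\al_{(-1)}^2\vac$, so the partition collapses the left side to $\fr{1}{24}\dsum_{\al\in K_{12}(4)}\al_{(-1)}^2\vac$. The symmetric tensor $\dsum_{\al\in K_{12}(4)}\al\otimes\al$ is $O(K_{12})$-invariant; since $K_{12}$ is indecomposable as a lattice, $O(K_{12})$ acts irreducibly on $\R\otimes K_{12}$, and together with the existence of the invariant Gram form this forces the space of invariant symmetric bilinear forms to be one-dimensional, so the tensor is a scalar multiple of the Gram form. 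A trace computation pins the scalar at $4\cd 756/12 = 252$, hence $\dsum_A\om_A = 21\,\om_{K_{12}}$ and the leading term becomes $\om_{K_{12}}-\fr{2}{45}\cd 21\,\om_{K_{12}} = \fr{1}{15}\om_{K_{12}}$, matching $u$.

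For the exponential contribution, coefficient matching demands $(-1)^{i(A(\al))}=-1$ on $S^1(4)\cup S^2(4)$ and $(-1)^{i(A(\al))}=+1$ on $S^3(4)\cup W(4)$. By Notation \ref{Li}, the sign satisfies $\ee^\al\cd\hat{\au}(\ee^\al)=(-1)^{i(A)}\hat{\au}^2(\ee^{-\al})$. For $\al\in S^1\subset M$, Lemma \ref{cocyle}(2) gives $\hat{\au}(\ee^\al)=\ee^{\au\al}$ and $\hat{\au}^2(\ee^{-\al})=-\ee^{-\au^2\al}$, while $\ee^\al\cd\ee^{\au\al}=\varepsilon(\al,\au\al)\ee^{\al+\au\al}=\ee^{-\au^2\al}$ since $\varepsilon(M,M)=1$ and $\al+\au\al=-\au^2\al$; hence $(-1)^{i(A)}=-1$. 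The $S^2$ case is symmetric under the exchange $M\leftrightarrow N$, the $S^3$ case follows from $S^3=\au S^2$ combined with the fact that $i(A)$ depends only on $A$ (not on the chosen generator), and the $W(4)$ case uses the mixed $M$--$N$ decomposition of $\al$ together with the vanishing $\varepsilon(\al,\au\al)\equiv 0\pmod{2}$ from Lemma \ref{cocyle}(3) to obtain $(-1)^{i(A)}=+1$.

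The main technical obstacle lies in the $W(4)$ stratum: because $\al\in W$ has a genuinely mixed decomposition $\al=x+\au y$ with $x,y\in M$, the action of $\hat{\au}=\tau_{e_M}\tau_{e_N}$ on $\ee^\al$ and the cocycle $\varepsilon(\al,\au\al)$ both require the careful bookkeeping afforded by Lemma \ref{cocyle}(3)(4), with the auxiliary cocycle identity from Notation \ref{cocycle} linking the two pieces. Once the four sign assignments are verified and combined with $\dsum_A\om_A = 21\,\om_{K_{12}}$, the identity $\om_{K_{12}}-\fr{1}{9}\dsum_A e_A = u$ follows directly.
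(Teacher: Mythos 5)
Your treatment of the conformal part is a legitimate alternative to the paper's: the paper simply observes that $\mathcal{A}$ splits into $21$ disjoint frames of six mutually orthogonal $\au$-invariant $\sqrt{2}A_2$'s, each frame contributing $\om_{K_{12}}$, whereas you get $\sum_{A}\om_A=21\,\om_{K_{12}}$ by a trace computation on the invariant tensor $\sum_{\al\in K_{12}(4)}\al\otimes\al$. The numbers check out, but the justification ``$K_{12}$ is indecomposable, hence $O(K_{12})$ acts irreducibly'' is not a valid implication in general (indecomposability says nothing about the size of the isometry group); irreducibility of the $O(K_{12})$-action is true but needs to be cited, and the paper's frame argument sidesteps the issue entirely.

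The genuine gap is in the exponential bookkeeping. Your rewriting
$-\frac{1}{9}\sum_A e_A = -\frac{2}{45}\sum_A\om_A-\frac{1}{45}\sum_{\al\in K_{12}(4)}(-1)^{i(A(\al))}\ee^\al$
presupposes that $\ee^{\pm\al}+\hat{\au}(\ee^{\pm\al})+\hat{\au}^2(\ee^{\pm\al})=\sum_{\be\in A(\al)(4)}\ee^\be$, i.e.\ that all six exponentials inside one $e_A$ carry the same sign $(-1)^{i(A)}$. That is false for the $36$ sublattices meeting $S^1\cup S^2\cup S^3$: by Lemma \ref{cocyle}(2), for $\al\in S^1(4)$ one has $\hat{\au}(\ee^{\pm\al})=\ee^{\pm\au\al}$ but $\hat{\au}^2(\ee^{\pm\al})=-\ee^{\pm\au^2\al}$, so the paper's expansion gives coefficient $-\tfrac15$ on $\ee^{\pm\al},\ee^{\pm\au\al}$ and $+\tfrac15$ on $\ee^{\pm\au^2\al}$ within the single vector $e_{A(\al)}$. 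This non-uniform pattern is exactly what is needed, since $\pm\al,\pm\au\al$ lie in $S^1(4)\cup S^2(4)$ while $\pm\au^2\al\in S^3(4)$, and the target $u$ has opposite coefficients on these two subsets. Your two matching conditions --- $(-1)^{i(A(\al))}=-1$ on $S^1(4)\cup S^2(4)$ and $=+1$ on $S^3(4)$ --- together with your appeal to ``$i(A)$ depends only on $A$'' are therefore mutually contradictory on every such $A$; a per-sublattice sign cannot reproduce the answer, and the per-vector signs coming from the lift $\hat{\au}$ (the content of Lemma \ref{cocyle}(1),(2)) are the essential input you are missing. A smaller but related slip: for $\al\in S^1$ you compute $\varepsilon(\al,\au\al)$ from $\varepsilon(M,M)=1$, but $\au\al$ lies in $S^2\subset N$, not in $M$, so that identity does not apply; the cocycle value must be computed from the mixed $M$--$N$ decomposition as in the paper.
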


\begin{proof}

First we note that $\mathcal{A}$ is a union of 21 disjoint $\sqrt{2}A_2$-frames, 
i.e., sets of 6 mutually orthogonal $\au$-invariant$\sqrt{2}A_2$-sublattices. 
Therefore, we have $\om_{K_{12}}=(1/21)\sum_{A\in \mathcal{A}} \om_A$.
Moreover, by Lemma \ref{cocyle}, we have
\[
\begin{split}
  e_{A(\al)} 
  = &\frac{2}{5} \om_{A(\al)} - \frac{1}{5} \left(\ee^{\al}+ \ee^{-\al} 
  + \hat{\au}(\ee^{\al} +\ee^{-\al})+ \hat{\au}^2( \ee^{\al} + \ee^{-\al})\right)
  \\
  = &\frac{2}{5} \om_{A(\al)} -\frac{1}{5} \left(\ee^{\al}+ \ee^{-\al} 
  + (\ee^{\au\al} +\ee^{-\au\al}) - ( \ee^{\au^2\al} + \ee^{-\au^2\al})\right)
\end{split}
\]
if $\al\in S^1$ and
\[
  e_{A(\al)} =  \frac{2}{5} \om_{A(\al)} + \frac{1}{5} \left(\ee^{\al}
  + \ee^{-\al} + (\ee^{\au\al} +\ee^{-\au\al}) + ( \ee^{\au^2\al} 
  + \ee^{-\au^2\al})\right)
\]
if $\al\in W$.
Hence, we have $u =\om_{K_{12}} -\frac{1}{9}\sum_{A\in \mathcal{A}} e_A$ as desired.
\end{proof}

\small

\end{document}